\documentclass[12pt]{amsart}       
\usepackage{txfonts}
\usepackage{amssymb}
\usepackage{eucal}
\usepackage{bbm}
\usepackage{tikz}
\usetikzlibrary{calc} %
\usetikzlibrary{positioning} %
\usepackage{graphicx}
\usepackage{amsmath}
\usepackage{amscd}
\usepackage{algorithmicx}
\usepackage{float}
\usepackage{caption}
\usepackage{enumitem}
\usepackage{algpseudocode}
\usepackage{algorithm}
\usepackage[all]{xy}           
\usepackage{amsfonts,latexsym}
\usepackage{xspace,amsgen}
\usepackage{epstopdf}
\usepackage{array}
\usepackage{float}
\usepackage[toc,page]{appendix}
\usepackage{bookmark}
\usepackage{color}
\allowdisplaybreaks[4]
\numberwithin{equation}{section}
\usepackage{fancybox}
\usepackage{colordvi}
\usepackage{multicol}
\newcommand{\Par}{\mathrm{Par}}
\newcommand{\curvearrowrightl}{\curvearrowright_{l}}
\usepackage[active]{srcltx} 
\usepackage{graphicx}

\usepackage[left=2.5cm,right=2.5cm,top=2.5cm,bottom=2.5cm]{geometry}

\newtheorem{theorem}{Theorem}[section]
\newtheorem{proposition}[theorem]{Proposition}    
\newtheorem{lemma}[theorem]{Lemma}
\newtheorem{coro}[theorem]{Corollary}
\newtheorem{prop-def}[theorem]{Proposition-Definition}  
\newtheorem{coro-def}[theorem]{Corollary-Definition}
\newtheorem{prop}[theorem]{ Proposition}  

\theoremstyle{definition}
\newtheorem{definition}[theorem]{Definition}
\newtheorem{defn}[theorem]{Definition}
\newtheorem{remark}[theorem]{Remark}

\newtheorem{exam}[theorem]{Example}

\renewcommand{\thefootnote}{}

\newcommand{\wt}{\text{wt}}

\setlist[enumerate,1]{label={\rm(\alph*)}}
\setlist[enumerate,2]{label={\rm(\roman*)}}

\newcommand{\nc}{\newcommand}

\newcommand{\TreeDotSize}{4.5pt}
\newcommand{\TreeXSpread}{0.45}
\newcommand{\TreeYStep}{0.60}
\newcommand{\TreeLabelSep}{0.2pt}
\newcommand{\NAcell}{\multicolumn{1}{c|}{\textemdash}}
\newcommand{\bij}{\hookrightarrow\kern-1.2ex\twoheadrightarrow}

\tikzset{
  dot/.style={
    circle,
    fill,
    inner sep=0pt,
    minimum size=\TreeDotSize
  },
  every node/.style={font=\small}
}

\newcommand{\TIKZ}[1]{%
  \vcenter{\hbox{%
    \tikz[baseline={(current bounding box.center)}]{#1}%
  }}%
}

\newcommand{\Tsingle}[1]{%
  \TIKZ{%
    \node[dot] (x) at (0,0) {};
    \node[right=1pt] at (x) {$#1$};
  }%
}

\newcommand{\Tbcad}{%
  \TIKZ{%
    \node[dot] (a) at (0,0) {}; \node[right=1pt] at (a) {$a$};
    \node[dot] (b) at (-\TreeXSpread,\TreeYStep) {}; \node[right=1pt] at (b) {$b$};
    \node[dot] (c) at ( \TreeXSpread,\TreeYStep) {}; \node[right=1pt] at (c) {$c$};
    \node[dot] (d) at ( \TreeXSpread,2*\TreeYStep) {}; \node[right=1pt] at (d) {$d$};
    \draw (a)--(b) (a)--(c) (c)--(d);
  }%
}

\newcommand{\Tacd}{%
  \TIKZ{%
    \node[dot] (a) at (0,0) {}; \node[right=1pt] at (a) {$a$};
    \node[dot] (c) at (0,\TreeYStep) {}; \node[right=1pt] at (c) {$c$};
    \node[dot] (d) at (0,2*\TreeYStep) {}; \node[right=1pt] at (d) {$d$};
    \draw (a)--(c)--(d);
  }%
}

\newcommand{\Tabc}{%
  \TIKZ{%
    \node[dot] (a) at (0,0) {}; \node[right=1pt] at (a) {$a$};
    \node[dot] (b) at (-\TreeXSpread,\TreeYStep) {}; \node[right=1pt] at (b) {$b$};
    \node[dot] (c) at ( \TreeXSpread,\TreeYStep) {}; \node[right=1pt] at (c) {$c$};
    \draw (a)--(b) (a)--(c);
  }%
}

\newcommand{\Tac}{%
  \TIKZ{%
    \node[dot] (a) at (0,0) {}; \node[right=1pt] at (a) {$a$};
    \node[dot] (c) at (0,1.25*\TreeYStep) {}; \node[right=1pt] at (c) {$c$};
    \draw (a)--(c);
  }%
}

\newcommand{\Tcd}{%
  \TIKZ{%
    \node[dot] (c) at (0,0) {}; \node[right=1pt] at (c) {$c$};
    \node[dot] (d) at (0,1.25*\TreeYStep) {}; \node[right=1pt] at (d) {$d$};
    \draw (c)--(d);
  }%
}

\nc{\tred}[1]{\textcolor{red}{#1}}
\nc{\tblue}[1]{\textcolor{blue}{#1}}
\nc{\tgreen}[1]{\textcolor{green}{#1}}
\nc{\tpurple}[1]{\textcolor{purple}{#1}}
\nc{\btred}[1]{\textcolor{red}{\bf #1}}
\nc{\btblue}[1]{\textcolor{blue}{\bf #1}}
\nc{\btgreen}[1]{\textcolor{green}{\bf #1}}
\nc{\btpurple}[1]{\textcolor{purple}{\bf #1}}
\nc{\NN}{{\mathbb N}}
\nc{\la}{L^+_a} \nc{\lam}{L^{-}_{a}}
\nc{\ncsha}{{\mbox{\cyr X}^{\mathrm NC}}} \nc{\ncshao}{{\mbox{\cyr
X}^{\mathrm NC}_0}}

\newcommand{\inj}{\lhook\joinrel\longrightarrow}
\renewcommand{\bij}{\widetilde{\longrightarrow}}
\newcommand{\shuffle}{{\sqcup\hskip -1.7pt\sqcup}}

\newcommand{\delete}[1]{}

\nc{\mlabel}[1]{\label{#1}}
\nc{\mcite}[1]{\cite{#1}}
\nc{\mref}[1]{\ref{#1}}
\nc{\meqref}[1]{\eqref{#1}}
\nc{\mbibitem}[1]{\bibitem{#1}}

\delete{
\nc{\mlabel}[1]{\label{#1}{\hfill \hspace{1cm}{\bf{{\ }\hfill(#1)}}}}
\nc{\mcite}[1]{\cite{#1}{{\bf{{\ }(#1)}}}}
\nc{\mref}[1]{\ref{#1}{{\bf{{\ }(#1)}}}}
\nc{\meqref}[1]{\eqref{#1}{{\bf{{\ }(#1)}}}}
\nc{\mbibitem}[1]{\bibitem[\bf #1]{#1}}
}
\newfont{\scyr}{wncyr10 scaled 650}
\nc{\sha}{{\mbox{\scyr X}}}  
\nc{\ssha}{\mbox{\bf \scyr X}}
\nc{\shap}{{\mbox{\cyrs X}}} 
\nc{\shpr}{\diamond}    
\nc{\shp}{\ast} \nc{\shplus}{\shpr^+}
\nc{\shprc}{\shpr_c}    
\nc{\dep}{\mrm{dep}} \nc{\lc}{\lfloor} \nc{\rc}{\rfloor}
\nc{\db}{\leq_{\rm db}} \nc{\bfk}{{\bf k}}

\nc{\cala}{{\mathcal A}} \nc{\calb}{{\mathcal B}}
\nc{\calc}{{\mathcal C}}
\nc{\cald}{{\mathcal D}} \nc{\cale}{{\mathcal E}}
\nc{\calf}{{\mathcal F}} \nc{\calg}{{\mathcal G}}
\nc{\calh}{{\mathcal H}} \nc{\cali}{{\mathcal I}}
\nc{\call}{{\mathcal L}} \nc{\calm}{{\mathcal M}}
\nc{\caln}{{\mathcal N}} \nc{\calo}{{\mathcal O}}
\nc{\calp}{{\mathcal P}} \nc{\calr}{{\mathcal R}}
\nc{\cals}{{\mathcal S}} \nc{\calt}{{\mathcal T}}
\nc{\calu}{{\mathcal U}} \nc{\calw}{{\mathcal W}} \nc{\calk}{{\mathcal K}}
\nc{\calx}{{\mathcal X}} \nc{\CA}{\mathcal{A}}

\nc{\fraka}{{\mathfrak a}} \nc{\frakA}{{\mathfrak A}}
\nc{\frakb}{{\mathfrak b}} \nc{\frakB}{{\mathfrak B}}
\nc{\frakc}{{\mathfrak c}}
\nc{\frakD}{{\mathfrak D}} \nc{\frakF}{\mathfrak{F}}
\nc{\frakf}{{\mathfrak f}} \nc{\frakg}{{\mathfrak g}}
\nc{\frakH}{{\mathfrak H}} \nc{\frakL}{{\mathfrak L}}
\nc{\frakM}{{\mathfrak M}} \nc{\bfrakM}{\overline{\frakM}}
\nc{\frakm}{{\mathfrak m}} \nc{\frakP}{{\mathfrak P}}
\nc{\frakN}{{\mathfrak N}} \nc{\frakp}{{\mathfrak p}}
\nc{\frakS}{{\mathfrak S}} \nc{\frakT}{\mathfrak{T}}
\nc{\frakX}{{\mathfrak X}}

\font\cyr=wncyr10 \font\cyrs=wncyr7
\nc{\dom}[1]{\textcolor{purple}{#1}}
\nc{\domc}[1]{\textcolor{purple}{$D$:#1}}
\nc{\lir}[1]{\textcolor{red}{Li:#1}}
\nc{\xing}[1]{\textcolor{blue}{Xing:#1}}
\nc{\revise}[1]{\textcolor{blue}{#1}}

\nc{\V}{V} \nc{\pro}{\otimes}
\nc{\ZZ}{\mathbb{Z}} 
\nc{\etree}{\mathbbm{1}}
\nc{\xx}{\mathcal{X}}
\nc{\RP}{{\mathcal{D}}^{\alpha}([0, T]^2, V)}
\nc{\Y}{{\bf Y}}\nc{\id}{\text{id}} 
\nc{\Id}{\text{Id}}\nc{\Z}{{\bf Z}}
\nc{\nca}{{\rm NC(A)}}

\nc{\pna}{{\rm {PN}(A)}}
\renewcommand{\nca}{\pna}
\nc{\bnca}{\overline{{\rm PN}}(A)} 

\nc{\btri}{\blacktriangleright_{w}}
\nc{\lbar}{\overline}
\nc{\da}{a}
\nc{\zia}{z_{i}^{\da}}
\nc{\z}[2]{z_{#1}^{#2}}
\nc{\lp}{[-,-]_{\cdot}}
\nc{\bzia}{\mathbf{z}_{\mathbf{i}}^{\mathbf{a}}}
\nc{\bzjb}{\mathbf{z}_{\mathbf{j}}^{\mathbf{b}}}
\nc{\bzkc}{\mathbf{z}_{\mathbf{k}}^{\mathbf{c}}}
\nc{\bg}{\mathbf{g}}
\nc{\OF}{\mathcal{OF}}
\nc{\OT}{\mathcal{OT}}
\nc{\fer}{\Phi}
\nc{\vz}{\mathcal{Z}}
\nc{\bx}{\mathbf{x}}
\nc{\by}{\mathbf{y}}
\nc{\bz}{\mathbf{z}}
\nc{\bu}{\mathbf{u}}
\nc{\bv}{\mathbf{v}}
\nc{\bw}{\mathbf{w}}
\nc{\bp}{\mathbf{p}}
\nc{\bq}{\mathbf{q}}
\nc{\bzai}[1]{\mathbf{z}_{\mathbf{i}}^{\mathbf{a}^{#1}}}
\nc{\plie}{\rm{PostLie}(A)}
\nc{\xhc}{\mathfrak{c}}
\nc{\dhc}{\mathfrak{C}}
\newcommand{\lppdot}{\mathchoice
  {\scalebox{6}{$\cdot$}}
  {\scalebox{6}{$\cdot$}}
  {\raisebox{-0.80ex}{\scalebox{1.8}{$\cdot$}}}
  {\raisebox{-0.60ex}{\scalebox{1.4}{$\cdot$}}}
}

\newcommand{\lpp}[2]{[#1,#2]_{\lppdot}}
\nc{\Yong}[1]{\textcolor{red}{Yong:#1}} \nc{\FF}{F}

\begin{document}

\title[The planar Hopf algebra of noncommutative multi-indices]{The planar Hopf algebra of noncommutative multi-indices}
%
%
\author{Yong Yu}
\address{School of Mathematics and Statistics, Lanzhou University
Lanzhou, 730000, China
}
\email{yuyong\_lzu@yeah.net}

\author{Xing Gao$^{*}$}\thanks{*Corresponding author}
\address{School of Mathematics and Statistics, Lanzhou University
Lanzhou, 730000, China; Gansu Provincial Research Center for Basic Disciplines of Mathematics
and Statistics, Lanzhou, 730070, China
}
\email{gaoxing@lzu.edu.cn}

\author{Dominique Manchon$^{*}$}
\address{Laboratoire de Math\'{e}matiques Blaise Pascal, CNRS-Universit\'{e} Clermont-Auvergne, 3 place Vasar\'ely, CS 60026, 63178 Aubi\`ere, France}
\email{Dominique.Manchon@uca.fr}
\date{}
\begin{abstract}
We construct the planar Linares--Otto--Tempelmayr Hopf algebra, thereby
filling the missing planar noncommutative multi-index corner in the square
relating the LOT, Butcher--Connes--Kreimer, and Munthe-Kaas--Wright Hopf
algebras. Starting from the free associative algebra on a weighted alphabet $\mathbb Z_{\ge -1}\times A$,
we define an insertion-type product yielding a post-Lie structure on the Lie
algebra generated by the linear span $V(A)$ of weight $-1$ monomials whose proper left prefixes all have nonnegative weight, and the Guin--Oudom
construction then produces the planar LOT Hopf algebra. We introduce a 
planar tree fertility map from decorated planar rooted trees to 
monomials in $V(A)$, prove that it is a linear isomorphism, and obtain a natural Hopf algebra isomorphism with the Munthe-Kaas--Wright Hopf
algebra. We further derive an
explicit coproduct formula in terms of left-admissible cuts, establish the
extraction-contraction coproduct, and construct a word symmetrization
operator compatible with the classical tree symmetrization operator.
\end{abstract}

\makeatletter
\@namedef{subjclassname@2020}{\textup{2020} Mathematics Subject Classification}
\makeatother
\subjclass[2020]{
05C05, 	
17B01, 
16T30, 
17A30. 
}

\keywords{Planar LOT Hopf algebra, post-Lie algebra, Lie algebra, Guin--Oudom construction.}

\begin{titlepage}
\thispagestyle{empty}

\end{titlepage}

\setcounter{page}{1}

\maketitle
\setcounter{footnote}{0}
\renewcommand{\thefootnote}{\arabic{footnote}}
\tableofcontents

\setcounter{section}{0}

\allowdisplaybreaks

\section{Introduction}\label{sec:introduction}

The theory of rough paths, initiated by T. Lyons, provides a robust algebraic and analytic
framework for differential equations driven by highly irregular signals \mcite{Bellingeri2023, lyons1998}.
Over the years it has developed into a rich meeting point of analysis, geometry, and combinatorics
\mcite{HBI2019,MD2025}. On the combinatorial side, rooted trees and rooted forests encode
branched expansions and play a fundamental role in rough differential equations, stochastic analysis,
and related problems \mcite{Manchon2020, Foissy2001, XING2024}. At the same time, multi-index
structures have emerged as an efficient alternative language in regularity structures, rough analysis,
and singular stochastic problems \mcite{YM2019, Bruned2024, 2026BHPZ, Otto2023}.

Besides the classical nonplanar branched setting, planar structures also arise naturally in Lie-Butcher
theory, Lie group integrators, and planarly branched rough paths \mcite{Manchon2020, XING2024, MKW2008, MKL2013}.
This immediately suggests a natural question: is there a planar and noncommutative analogue of the
Hopf algebra of decorated multi-indices, playing for planar rooted forests the same role that the
commutative Linares-Otto-Tempelmayr Hopf algebra plays for ordinary rooted forests? The main
motivation of the present paper is to answer this question positively.

\subsection{BCK and LOT Hopf algebras}

The Butcher-Connes-Kreimer Hopf algebra $\mathcal H_{\mathrm{BCK}}^{A}$ is the basic combinatorial
Hopf algebra on $A$-decorated rooted forests, with coproduct governed by admissible cuts
\mcite{Foissy2001}. Decorated rooted trees realize the free pre-Lie algebra on the set of decorations,
and pre-Lie algebras provide the natural algebraic framework for the rooted-tree side
\mcite{CL2001,DMA2011}. The corresponding Grossman-Larson type structures arise from the
Guin-Oudom construction on enveloping algebras of pre-Lie algebras \mcite{MML2006,JMD2008};
see also \mcite{MK2014,Rahm2021} for related developments.\\

On the commutative multi-index side, the Linares-Otto-Tempelmayr Hopf algebra
$\mathcal H_{\mathrm{LOT}}^{A}$ is rooted in the free Novikov algebra generated by the set of
decorations \mcite{BCZ2017, DA2002, DS2025, SK2023}. The key link with the rooted-tree side is
provided by the fertility map from decorated rooted trees to decorated multi-indices
\mcite{LINARES2023,ZHU2024}. By transposition and multiplicative extension, this map induces a canonical
embedding
\[
j:\mathcal H_{\mathrm{LOT}}^{A}\hookrightarrow \mathcal H_{\mathrm{BCK}}^{A},
\]
thereby relating the commutative multi-index side to the nonplanar rooted-tree side
\mcite{ZHU2024}. A decisive recent advance was achieved in \mcite{ZHU2024}, where the Hopf
structure of $\mathcal H_{\mathrm{LOT}}^{A}$ was made completely explicit through a combinatorial
formula for its coproduct in terms of admissible cuts of decorated multi-indices. The corresponding
extraction-contraction coproduct fits naturally into the general framework of interacting Hopf algebras
\mcite{Calaque2011}.

\subsection{MKW Hopf algebra and the missing corner}

On the planar tree side, the relevant Hopf algebra is the Munthe-Kaas-Wright Hopf algebra
$\mathcal H_{\mathrm{MKW}}^{A}$ of decorated planar rooted forests \mcite{Rahm2023, MKW2008}.
Its coproduct is described by left-admissible cuts, and its graded dual is governed by the ordered
Grossman-Larson product \mcite{MKW2008}. Moreover, decorated planar rooted trees carry the
classical left grafting operation, which yields a post-Lie algebra structure and places the planar
rooted-tree side in the framework of Lie-Butcher theory \mcite{MJH2023, FK2014, LMK2013, MKL2013, Rahm2022}.\\

At this stage a natural square already emerges. The upper horizontal map $j$ is known on the
commutative/nonplanar side, and the right vertical map
\[
\Omega_t:\mathcal H_{\mathrm{BCK}}^{A}\longrightarrow \mathcal H_{\mathrm{MKW}}^{A}
\]
is the classical tree symmetrization operator of Munthe-Kaas and Wright \mcite{MKW2008}. Thus
three corners of the square are already available:
\[
\mathcal H_{\mathrm{LOT}}^{A},\quad
\mathcal H_{\mathrm{BCK}}^{A},\quad
\mathcal H_{\mathrm{MKW}}^{A}.
\]
What is missing is the planar noncommutative multi-index corner. In other words, the missing object
is precisely
\[
\mathcal H_{\mathrm{PLOT}}^{A}.
\]
The contribution of the present paper is therefore not merely to provide another realization of an
already known Hopf algebra. Its central purpose is to construct this missing object itself and to show
that, once it is inserted, the whole square becomes commutative. More precisely, we construct a Hopf algebra $\mathcal H_{\mathrm{PLOT}}^{A}$ together with a Hopf algebra isomorphism
\[
J:\mathcal H_{\mathrm{PLOT}}^{A}
\bij 
\mathcal H_{\mathrm{MKW}}^{A}
\]
and a word symmetrization operator
\[
\Omega_w:\mathcal H_{\mathrm{LOT}}^{A}\inj \mathcal H_{\mathrm{PLOT}}^{A},
\]
so that the diagram
\begin{equation}
\xymatrix{
 \mathcal{H}_{{\rm LOT}}^{A} \ar@{^{(}->}[d]_{\Omega_w}\ar@{^{(}->}[r]^{j}
 & \mathcal{H}_{{\rm BCK}}^{A}  \ar@{^{(}->}[d]^{\Omega_t}\\
 \mathcal{H}_{{\rm PLOT}}^{A} \ar[r]^{J}_{\sim}
 & \mathcal{H}_{{\rm MKW}}^{A}
}
\mlabel{eq:intro-square}
\end{equation}
commutes. Equivalently, the point is to prove the identity
\begin{equation}\label{eq:intro-commute}
J\circ \Omega_w=\Omega_t\circ j.
\end{equation}
This is the conceptual endpoint of the paper: the previously incomplete square is completed by the
construction of $\mathcal H_{\mathrm{PLOT}}^{A}$, and the four Hopf algebras are linked by the
commuting relation \eqref{eq:intro-commute}.\\

This motivation also explains our method. We do not start from a free
noncommutative Novikov algebra. Instead, we work directly in the free
associative algebra generated by a weighted alphabet $\mathcal Z_A:=\{z_i^a,\, i\in\mathbb Z_{\ge -1} \hbox{ and }a\in A\}$. On the non-positive weight part of the free associative algebra, we define an
insertion-type product $\blacktriangleright_w$.  We then prove that it restricts to the Lie
subalgebra $\mathfrak g(A)$ generated by a subspace $V(A)$ spanned by suitable monomials of weight $-1$, and yields a post-Lie
algebra structure on it \mcite{KMF2021,BK2023,FK2014, FARD2015,Foissy2018}.
Applying the Guin-Oudom construction then produces the Hopf algebra
$\mathcal H_{\mathrm{PLOT}}^{A}$.\\

Once this algebraic framework is in place, the next task is to connect it to planar rooted trees.
For that purpose we define a planar  tree  fertility map from decorated planar rooted trees to
monomials in $V(A)$. Here the planar situation exhibits a remarkable rigidity: the planar order
determines a canonical parent structure on the word side, and the planar tree fertility map turns out to be
bijective, with image $V(A)$. Thus, unlike the commutative case, each monomial in $V(A)$ corresponds to a unique
decorated planar rooted tree. This rigidity is exactly what makes the construction of the lower-left
corner in \eqref{eq:intro-square} both natural and workable.\\

The bijection provided by the planar tree fertility map allows us to transport the Munthe-Kaas--Wright
coproduct to the monomial side. A major point of the paper is that the transported coproduct is not
left in a purely abstract dual form. We reconstruct the parent map directly from a word, define
left-admissible cuts intrinsically on the monomial side, and obtain an explicit combinatorial formula
for the coproduct of $\mathcal H_{\mathrm{PLOT}}^{A}$ in terms of pruning and trunk factors written
entirely in the word model. We then establish the extraction-contraction coproduct and finally
construct the symmetrization operator $\Omega_w$, whose role is to make the square
\eqref{eq:intro-square} commute.\\

The preceding discussion identifies the missing lower-left corner in
\eqref{eq:intro-square}; we now summarize how the results of the paper construct
this corner and make the whole diagram commute.

\subsection{Main contributions and outline of the paper}
\label{subsec:main-contributions-outline}

The main contributions of the paper can be summarized as follows.

\begin{enumerate}
\item We construct the planar noncommutative multi-index object at the algebraic
level. More precisely, starting from the weighted noncommutative alphabet $\mathcal Z_A$, we
prove that the Lie algebra $\mathfrak g(A)$ carries a
natural post-Lie algebra structure (Theorem~\ref{postPN}). This is the basic
algebraic input which makes the planar LOT construction possible. Applying the
Guin--Oudom construction to this post-Lie algebra then produces the Hopf algebra
$\mathcal H_{\rm PLOT}^{A}$, which is the missing planar noncommutative
multi-index Hopf algebra in \eqref{eq:intro-square}.

\item We connect this newly constructed monomial model with the planar rooted-tree
model. The planar fertility map is shown to be compatible with the post-Lie
structures (Proposition~\ref{faen}), and the planar order gives a rigidity
phenomenon which does not occur in the commutative multi-index setting: every
weight $-1$ monomial in $V(A)$ determines a unique decorated planar rooted tree
(Proposition~\ref{pro:Phibi}). Consequently, the transpose map yields the Hopf
algebra  isomorphism
\[
J:\mathcal H_{\rm PLOT}^{A}\bij \mathcal H_{\rm MKW}^{A}
\]
(Equation~\ref{J:hopfm}), which identifies the newly constructed lower-left
corner with the planar rooted-forest side.

\item We give an intrinsic word-level formula for the coproduct of
$\mathcal H_{\rm PLOT}^{A}$. Instead of leaving the coproduct as a transported
structure from $\mathcal H_{\rm MKW}^{A}$, we reconstruct the parent map directly
from a word, define left-admissible cuts on noncommutative monomials, and obtain
a completely explicit pruning--trunk formula (Theorem~\ref{thm:plotcoprodladm}).
This shows that $\mathcal H_{\rm PLOT}^{A}$ is not merely an abstract copy of the
MKW Hopf algebra, but a genuine noncommutative multi-index Hopf algebra with its
own combinatorial cut calculus. We also establish the corresponding
extraction--contraction coproduct and prove its compatibility with the planar
rooted-forest extraction--contraction structure (Theorem~\ref{thm:ec-transportA}).

\item We complete the comparison with the commutative/nonplanar theory. On the
tree side, the classical symmetrization operator $\Omega_t$ embeds the BCK Hopf
algebra into the MKW Hopf algebra. We construct the corresponding word
symmetrization operator
\[
\Omega_w:\mathcal H_{\rm LOT}^{A}\longrightarrow \mathcal H_{\rm PLOT}^{A},
\]
and prove that it is compatible with $\Omega_t$, $j$, and $J$. Equivalently, the
square
\[
J\circ\Omega_w=\Omega_t\circ j
\]
commutes (Theorem~\ref{thm:symmetrization-square}). Thus the construction of
$\mathcal H_{\rm PLOT}^{A}$ does exactly what was missing from the existing
picture: it fills the planar noncommutative multi-index corner and links the four
Hopf algebras in one coherent commutative diagram.
\end{enumerate}

The organization of the paper reflects this progression.  Section~2 introduces the weighted noncommutative alphabet,
the insertion product $\btri$ on $\mathfrak g(A)$, and the corresponding post-Lie
algebra structure (Theorem~\ref{postPN}). It then recalls the
Munthe-Kaas--Wright Hopf algebra, constructs the planar tree fertility map, establishes the tree--monomial bijection (Proposition~\ref{pro:Phibi}), proves
its compatibility with the post-Lie structures (Proposition~\ref{faen}), and
obtains the Hopf algebra isomorphism
$J:\mathcal H_{\rm PLOT}^{A} \bij  \mathcal H_{\rm MKW}^{A}$. Section~3 reconstructs the parent map on the
monomial side and derives the explicit coproduct formula for
$\mathcal H_{\rm PLOT}^{A}$ in terms of left-admissible cuts
(Theorem~\ref{thm:plotcoprodladm}). Section~4 develops the
extraction--contraction coproduct and proves that it is transported from the
ordered planar rooted-forest side to the word side (Theorem~\ref{thm:ec-transportA}).
Finally, Section~5 constructs the word symmetrization operator $\Omega_w$ and
proves that it is compatible with the classical tree symmetrization operator
$\Omega_t$, thereby proving the commutative square
(Theorem~\ref{thm:symmetrization-square}).

\medskip

\noindent\textbf{Notations.}
Throughout the paper we work over a field $\bfk$ of characteristic zero.
All vector spaces, algebras, coalgebras, tensor products, and linear maps are
taken over $\bfk$.  Unless otherwise specified, an algebra means a unital
associative $\bfk$-algebra, and a coalgebra means a counital coassociative
$\bfk$-coalgebra.  We use Sweedler's notation for the coproduct:
\[
\Delta(X)=\sum_{(X)} X_{(1)}\otimes X_{(2)}.
\]

\section{Planar Linares-Otto-Tempelmayr Hopf algebra}\label{notation}
In this section, we first construct the planar Linares--Otto--Tempelmayr
Hopf algebra $\mathcal H_{\rm PLOT}^{A}$.  We then construct the planar
tree fertility map and identify decorated planar rooted trees with the monomials
in $V(A)$.  Finally, we derive the mock-cocycle condition of the
coproduct $\Delta_{\rm PLOT}$.

\subsection{Planar LOT Hopf algebra} \label{ssec: PLOT}
In this subsection, we introduce a weighted alphabet $\vz_A$ associated to a
set $A$ of decorations, and the free noncommutative associative unital
algebra $\bnca:=\bfk\langle \vz_A\rangle$ with its length and weight
gradings.  We define an insertion-type product $\blacktriangleright_w$ on the non-positive weight part $\bnca_{\le 0}$ of
$\bnca$, giving it a post-Lie algebra structure, and then restrict to the Lie subalgebra
$\mathfrak g(A)$ generated by a suitable subset of monomials of weight $-1$.  Applying the
Guin--Oudom construction and taking the graded linear dual gives the planar
LOT Hopf algebra.\\

\noindent  Let $A$ be a finite set of decorations, and consider the collection of variables
\[
\vz_A:=\{z_i^a\mid (a,i)\in A\times \mathbb Z_{\ge -1}\}.
\]
The integer $i\ge -1$ is called the  weight of the variable $z_i^a$.
Let $\bnca:=\bfk\langle \vz_A\rangle$ be the free unital associative algebra on $\vz_A$.
A monomial in $\bnca$ is a word of the form
\begin{equation}
\bzia= z_{i_1}^{a_1}\cdots z_{i_p}^{a_p} ,
\mlabel{eq:bzia}
\end{equation}
where each letter $x_j:=z_{i_j}^{a_j}\in \vz_A$  is allowed to occur multiple times. The integer $p$ in \eqref{eq:bzia} is the {\bf length} $\vert\bzia\vert$ of the word $\bzia$. This yields a grading
\[
\bnca=\bigoplus_{n\ge 0}\bnca^{(n)},
\]
where $$\bnca^{(n)}:= \bfk \{\bzia\in \bnca,\, \vert\bzia\vert= n\}.$$ 
We define the weight map and the decoration map:
\begin{alignat*}{2}
\wt:\vz_A &\to \mathbb Z_{\ge -1} \quad & \quad d:\vz_A &\to A\\
z_i^a &\mapsto i, & z_i^a &\mapsto a.
\end{alignat*}
The {\bf weight} of the monomial
$\bzia= z_{i_1}^{a_1}\cdots z_{i_p}^{a_p}$ is defined by
\[
\wt(\bzia):=\sum_{j=1}^p \wt(z_{i_j}^{a_j})=\sum_{j=1}^p i_j.
\]
We define
the non-positive weight part
\[
\bnca_{\le0}
:=
\bfk\{\bzia\in\bnca\mid \wt(\bzia)\le0\}
\]
and 
\[
\partial:\vz_A\to \vz_A,\quad z_i^a\mapsto z_{i+1}^a .
\]
By the Leibniz rule, it extends uniquely to a derivation
\[
\partial:\bnca \to \bnca,\quad
x_1\cdots x_p\mapsto
\sum_{j=1}^{p}
x_1\cdots x_{j-1}\,\partial(x_j)\,x_{j+1}\cdots x_p .
\]
For $m\ge0$, we write $\partial^m$ for the $m$-fold composition of
$\partial$, with the convention $\partial^0=\id$. The derivation $\partial$ is homogeneous of degree $+1$ with respect to the
weight, and homogeneous of degree zero with respect to the length.

Let us consider the Lie algebra $\left(\bnca_{\le0},\lpp{-}{-}\right)$ equipped with the commutator bracket $\lpp{-}{-}$. 
Set
\[
V(A):=
\bfk\Bigl\{
z^{a_1}_{i_1}\cdots z^{a_p}_{i_p}\in\bnca
\ \Big|\ 
\sum_{r=1}^{p}i_r=-1,\ 
\sum_{r=1}^{q}i_r\ge 0\ \text{for all }1\le q<p
\Bigr\}.
\]
Equivalently, $V(A)$ is the $\bfk$-vector subspace of $\bnca$ spanned by the
weight $-1$ monomials whose proper left prefixes all have nonnegative weight.
Here a proper left prefix of a word $x_1\cdots x_p$ means a word
$x_1\cdots x_q$ with $1\le q<p$.
Define
\[
\mathfrak g(A):=\mathrm{Lie}\big(V(A)\big)
\subseteq
\left(\bnca_{\le0},\lpp{-}{-}\right)
\]
to be the Lie subalgebra of $\left(\bnca_{\le0},\lpp{-}{-}\right)$ generated by $V(A)$. As a Lie subalgebra of the free Lie algebra $\left(\bnca,\lpp{-}{-}\right)$, the Lie algebra $\mathfrak g(A)$ is also a free thanks to the \v Sir\v sov-Witt theorem.\\

The commutator bracket $\lpp{-}{-}$ is homogeneous with respect to the length.
Hence $\mathfrak g(A)$ is also $\mathbb{Z}_{\geq 0}$-graded by the length:
\begin{equation}
\mathfrak g(A)=\bigoplus_{n\ge 0}\mathfrak g(A)^{(n)},\,\text{ where }\,
\mathfrak g(A)^{(n)}:=\mathfrak g(A)\cap \bnca^{(n)}.
\mlabel{eq:deggrad}
\end{equation}
Notice that each $\mathfrak g(A)^{(n)}$ is finite-dimensional.   Indeed, in a
length $n$ monomial of $V(A)$, every index belongs to the finite
interval $[-1,n-2]$; hence there are only finitely many such monomials, and
only finitely many iterated Lie words with total length $n$.\\

\begin{defn}~\cite{FK2014,MKL2013}
A {\bf post-Lie algebra} $(\bg, \blacktriangleright, [-,-])$  is a Lie algebra $(\bg, [-,-])$ together with a  product $\blacktriangleright : \bg \otimes \bg \to \bg$ such that, for any $x, y, z \in \bg$,
\begin{align*}
x \blacktriangleright [y, z] &= [x \blacktriangleright y, z] + [y, x \blacktriangleright z],\\ 
[x, y] \blacktriangleright z &= a_\blacktriangleright(x, y, z) - a_\blacktriangleright(y, x, z),\nonumber
\mlabel{eq:postlie1}
\end{align*}
where $a_\blacktriangleright(x, y, z) = x \blacktriangleright (y \blacktriangleright z) - (x \blacktriangleright y) \blacktriangleright z$ is the associator.
\end{defn}

\noindent Now we build a post-Lie product $\btri$ on $\bnca_{\leq 0}$.

\begin{definition}
Define
\begin{equation}
\begin{aligned}
\btri:\bnca_{\leq 0}\otimes \bnca_{\leq 0}
&\longrightarrow \bnca_{\leq 0},\\
\bx\otimes \by
&\longmapsto
\sum_{j=1}^q
y_1\cdots y_{j-1}
\partial^{-{\rm wt}(\bx)}(y_j)
\bx
y_{j+1}\cdots y_q .
\end{aligned}
\mlabel{eq:winser}
\end{equation}
where $\bx=x_1\cdots x_p$ and $\by=y_1\cdots y_q\in\bnca_{\leq 0}$. The formula is first given for monomials and then extended $\bfk$-bilinearly.
\end{definition}

The following example illustrates~\eqref{eq:winser}.

\begin{exam}
Let ${\bf x}=z_0z_{-1}$ and ${\bf y}=z_1z_0z_{-1}z_{-1}\in V(A)$, in the undecorated case $A=\{*\}$.
Then
\[
\begin{aligned}
{\bf x}\btri{\bf y}
={}&\ \partial(z_1)\,{\bf x}\,z_0z_{-1}z_{-1}
+z_1\,\partial(z_0)\,{\bf x}\,z_{-1}z_{-1}
+z_1z_0\,\partial(z_{-1})\,{\bf x}\,z_{-1}
+z_1z_0z_{-1}\,\partial(z_{-1})\,{\bf x}\\
={}&\ z_2z_0z_{-1}z_0z_{-1}z_{-1}
+z_1z_1z_0z_{-1}z_{-1}z_{-1}
+z_1z_0z_0z_0z_{-1}z_{-1}
+z_1z_0z_{-1}z_0z_0z_{-1}.
\end{aligned}
\]
\end{exam}

\begin{remark}
\begin{enumerate}
\item The product $\btri$ is defined by  \meqref{eq:winser},
where $\partial^{-\mathrm{wt}(\bx)}$ shifts the index of the $j$-th letter of $\by$ by $-\mathrm{wt}(\bx)$.
So the product $\bx \btri \by$ encodes inserting $\bx$ after every letter of $\by$ via the operator $\partial^{-\mathrm{wt}(\bx)}$ applied to the letter.
The case $\mathrm{wt}(\bx)=-1$ is of particular interest:
$$\partial^{-\rm wt(\bx)}(y_{j}) =: \partial(z_{i_{j}}^b) = z_{i_{j}+1}^b,$$
meaning that the letter $z_{i_{j}}^b$ of $\by$ is replaced by the letter $z_{i_{j}+1}^b$, and then $\bx$ is inserted immediately after it.
\mlabel{it:PNsta}

\item
The index shift via $\partial^{-{\rm wt}(\bx)}$ ensures that the linear endomorphisms ${\bf x}\btri -$ of $\bnca_{\le 0}$ respect the weight. Indeed, the weight of any summand in the right hand side of~\meqref{eq:winser} is
\begin{align}
\Bigl(\sum_{\ell=1}^{j-1} {\rm wt(y_{\ell})}\Bigr) + \big({\wt(y_{j})}-{\rm wt}(\bx)\big) + {\rm wt}(\bx) + \Bigl(\sum_{\ell=j+1}^{q} {\rm wt(y_{\ell})}\Bigr)
&=\sum_{\ell=1}^q {\rm wt(y_{\ell})}\nonumber\\
&={\rm wt}(\by).\nonumber
\end{align}

\end{enumerate}
\mlabel{rem:PNstabletri}
\end{remark}

\begin{lemma}
The triple $(\bnca_{\leq 0},\btri,\lpp{-}{-})$  is a post-Lie algebra.
\mlabel{lem:ncpost}
\end{lemma}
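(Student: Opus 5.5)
The plan is to reinterpret the left action $\bx\btri-$ as a derivation of the free associative algebra $\bnca$, and then to check both post-Lie axioms at the level of derivations. By bilinearity it suffices to treat monomials $\bx,\by,\bz\in\bnca_{\le0}$, which are homogeneous for the weight. For such an $\bx$ put $m:=-\wt(\bx)\ge0$. Define $D_\bx$ as the unique derivation of $(\bnca,\cdot)$ determined on letters by
\[
D_\bx(y)=\partial^{m}(y)\,\bx ,\qquad y\in\vz_A .
\]
Expanding $D_\bx(y_1\cdots y_q)$ by the Leibniz rule reproduces \eqref{eq:winser} term by term, so $\bx\btri\by=D_\bx(\by)$. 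By Remark~\ref{rem:PNstabletri}, $D_\bx$ preserves weight, hence it preserves $\bnca_{\le0}$. Note that $D_\bx$ depends on $\bx$ only through $\bx$ itself and the integer $\wt(\bx)$. Therefore, for a general element we decompose it into weight-homogeneous components and set $D$ on each component; with this convention $\bx\mapsto D_\bx$ is linear.

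\emph{First axiom.} A derivation of an associative product is automatically a derivation of its commutator bracket:
\[
D_\bx(\by\bz-\bz\by)=\lpp{D_\bx\by}{\bz}+\lpp{\by}{D_\bx\bz}.
\]
This is exactly $\bx\btri\lpp{\by}{\bz}=\lpp{\bx\btri\by}{\bz}+\lpp{\by}{\bx\btri\bz}$.

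\emph{Second axiom.} Rewrite the second post-Lie identity as an identity of operators:
\[
D_{\lpp{\bx}{\by}}+D_{\bx\btri\by}-D_{\by\btri\bx}=D_\bx D_\by-D_\by D_\bx .
\]
Both sides are derivations of $\bnca$: the left side is a linear combination of derivations, and the right side is a commutator of derivations. Hence it is enough to compare them on a single letter $z\in\vz_A$.

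Set $m=-\wt(\bx)$ and $n=-\wt(\by)$. The elements $\lpp{\bx}{\by}$, $\bx\btri\by$ and $\by\btri\bx$ are weight-homogeneous of weights $-(m+n)$, $-n$ and $-m$ respectively. Using that $\partial$ commutes with itself, the left side gives
\[
\partial^{m+n}(z)\lpp{\bx}{\by}+\partial^n(z)(\bx\btri\by)-\partial^m(z)(\by\btri\bx).
\]
For the right side, the Leibniz rule gives
\[
D_\bx D_\by(z)=D_\bx\big(\partial^n(z)\,\by\big)=\partial^{m+n}(z)\,\bx\by+\partial^n(z)\,(\bx\btri\by).
\]
Symmetrically, $D_\by D_\bx(z)=\partial^{m+n}(z)\,\by\bx+\partial^m(z)\,(\by\btri\bx)$. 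Subtracting the second from the first yields exactly the left-side expression, which proves the second axiom.

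The main point requiring care, and the only real obstacle, is the bookkeeping of weights. The exponent in $D_\bx$ depends on $\wt(\bx)$, so linearity in $\bx$ only holds after splitting into homogeneous components. One must therefore verify that the three elements $\lpp{\bx}{\by}$, $\bx\btri\by$ and $\by\btri\bx$ have the weights used above, namely $-(m+n)$, $-n$ and $-m$. This follows from the additivity of weight under concatenation and from Remark~\ref{rem:PNstabletri}. Non-positivity of all weights is what keeps $\partial^{m}$ well defined and keeps everything inside $\bnca_{\le0}$.
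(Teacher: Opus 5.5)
Your proof is correct. For the first axiom it is the same as the paper's Step~1: the paper proves $\bx\btri(\by\bz)=(\bx\btri\by)\bz+\by(\bx\btri\bz)$ by splitting the sum over positions in $\by\bz$, which is exactly the statement that $\bx\btri-$ is your derivation $D_\bx$.

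For the second axiom your route is genuinely different.
\begin{itemize}
\item \emph{The paper.} It fixes a monomial $\bz=z_1\cdots z_r$ and expands $\bx\btri(\by\btri\bz)$ into four families, according to where $\bx$ is inserted: into $\bz_{<l}$, onto the shifted letter $\partial^{n}(z_l)$, inside $\by$, or into $\bz_{>l}$. It cancels the ``inside $\by$'' family against $(\bx\btri\by)\btri\bz$. It then reindexes double sums to show that the cross terms of $a_{\btri}(\bx,\by,\bz)$ and $a_{\btri}(\by,\bx,\bz)$ coincide. What remains is $\sum_l \bz_{<l}\partial^{m+n}(z_l)(\bx\by-\by\bx)\bz_{>l}$.
\item \emph{Your argument.} You rewrite the axiom as the operator identity $D_{[\bx,\by]}+D_{\bx\btri\by}-D_{\by\btri\bx}=D_\bx D_\by-D_\by D_\bx$. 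Both sides are derivations of the free algebra $\bnca$, so a single-letter check suffices. The cross-term cancellation that the paper does by reindexing is absorbed into the general fact that a commutator of derivations is a derivation.
\end{itemize}

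Your version is shorter and isolates exactly what is used. The inputs are $\partial^m\partial^n=\partial^{m+n}$ on letters, and the weight-homogeneity of $\bx\btri\by$ and $\by\btri\bx$, of weights $\wt(\by)$ and $\wt(\bx)$. Consequently it would go through unchanged for any family of letter maps $\phi_m$ with $\phi_m\circ\phi_n=\phi_{m+n}$ in place of $\partial^m$. The paper's computation is more pedestrian, but it shows term by term which insertions cancel. That bookkeeping mirrors the grafting picture later used to match $\btri$ with $\curvearrowright_l$.
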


\begin{proof}
Let
\[
\bx:=x_1\cdots x_p,
\quad
\by=y_1\cdots y_q,
\quad
\bz:=z_1\cdots z_r,
\]
in $\bnca_{\leq 0}$, where $x_i,y_j,z_k\in \vz_A$.
For $1\le k\le q$, set
\[
\by_{<k}:=y_1\cdots y_{k-1},
\quad
\by_k:=y_k,
\quad
\by_{>k}:=y_{k+1}\cdots y_q,
\]
and similarly define $\bz_{<l}$, $\bz_l$ and $\bz_{>l}$ for $\bz$.

\noindent\noindent{\bf Step 1.} We prove that
\begin{equation}
\bx \btri \lpp{\by}{\bz}
=
\lpp{\bx\btri \by}{\bz}
+
\lpp{\by}{\bx\btri \bz}.
\mlabel{eq:aim1}
\end{equation}
By~\eqref{eq:winser},
\[
\bx\btri(\by\bz)
=
\sum_{k=1}^{q+r}(\by\bz)_{<k}\,\partial^{-\rm wt(\bx)}\bigl((\by\bz)_k\bigr)\,\bx\,(\by\bz)_{>k}.
\]
For $1\le k\le q$, one has
\[
(\by\bz)_{<k}=\by_{<k},
\quad
(\by\bz)_k=y_k,
\quad
(\by\bz)_{>k}=\by_{>k}z_1\cdots z_r.
\]
For $q<k\le q+r$, writing $l:=k-q$, one has
\[
(\by\bz)_{<k}=y_1\cdots y_q\bz_{<l},
\quad
(\by\bz)_k=z_l,
\quad
(\by\bz)_{>k}=\bz_{>l}.
\]
Hence
\begin{align}
\bx\btri(\by\bz)
&=
\sum_{k=1}^q \by_{<k}\partial^{-\rm wt(\bx)}(y_k)\,\bx\,\by_{>k}z_1\cdots z_r
+
\sum_{l=1}^r y_1\cdots y_q\bz_{<l}\partial^{-\rm wt(\bx)}(z_l)\,\bx\,\bz_{>l}\nonumber\\
&=(\bx\btri\by)\bz+\by(\bx\btri\bz).
\mlabel{eq:xyz}
\end{align}
Similarly,
\begin{equation}
\bx\btri(\bz\by)=(\bx\btri\bz)\by+\bz(\bx\btri\by).\mlabel{eq:xzy}
\end{equation}
Subtracting~\eqref{eq:xzy} from~\eqref{eq:xyz}, we obtain
\begin{equation*}
\bx\btri \lpp{\by}{\bz}
=\lpp{\bx\btri \by}{\bz}+\lpp{\by}{\bx\btri \bz},
\mlabel{eq:laim1}
\end{equation*}
which proves~\eqref{eq:aim1}.

\noindent\noindent{\bf Step 2.} We show that
\begin{equation}
\lpp{\bx}{\by}\btri \bz
=
a_{\btri}(\bx,\by,\bz)-a_{\btri}(\by,\bx,\bz).
\mlabel{eq:aim2}
\end{equation}
For the left-hand side, it follows from~\eqref{eq:winser} that
\begin{align}
\lpp{\bx}{\by}\btri \bz
=
(\bx\by-\by\bx)\btri \bz
=
\sum_{l=1}^{r}\bz_{<l}\partial^{-\rm wt(\bx\by)}(z_l)(\bx\by-\by\bx)\bz_{>l}.
\mlabel{eq:lhsfinal}
\end{align}
For the right-hand side, we first have
\begin{align}
\bx \btri (\by \btri \bz)
=&\ \bx \btri \biggl(\sum_{l=1}^{r}\bz_{<l}\partial^{-\rm wt(\by)}(z_l)\by\bz_{>l}\biggr)
\overset{\eqref{eq:winser}}{=}
\sum_{l=1}^{r}\bx \btri \bigl(\bz_{<l}\partial^{-\rm wt(\by)}(z_l)\by\bz_{>l}\bigr)
\nonumber\\
=&\ \sum_{l=1}^{r}\sum_{j=1}^{l-1}
\bz_{<j}\partial^{-\rm wt(\bx)}(z_j)\bx z_{j+1}\cdots z_{l-1}\partial^{-\rm wt(\by)}(z_l)\by\bz_{>l}
\label{eq:x-tri-yz-1}
\\
&\ +\sum_{l=1}^{r}
\bz_{<l}\partial^{-\rm wt(\bx\by)}(z_l)\bx\by\bz_{>l}\nonumber
\\
&\ +\sum_{l=1}^{r}\sum_{m=1}^{q}
\bz_{<l}\partial^{-\rm wt(\by)}(z_l)\by_{<m}\partial^{-\rm wt(\bx)}(y_m)\bx\by_{>m}\bz_{>l}
\label{eq:x-tri-yz-3}
\\
&\ +\sum_{l=1}^{r}\sum_{t=1}^{r-l}
\bz_{<l}\partial^{-\rm wt(\by)}(z_l)\by z_{l+1}\cdots z_{l+t-1}\partial^{-\rm wt(\bx)}(z_{l+t})\bx\bz_{>l+t}.\nonumber
\end{align}
Similarly, we obtain
\begin{align}
(\bx\btri \by)\btri \bz
\overset{\eqref{eq:winser}}{=}&\ \sum_{l=1}^{r}\sum_{k=1}^{q}
\bz_{<l}\partial^{-\rm wt(\bx \btri \by)}(z_l)\by_{<k}\partial^{-\rm wt(\bx)}(y_k)\bx\by_{>k}\bz_{>l}.
\mlabel{eq:xytriz}
\end{align}
Comparing~\eqref{eq:x-tri-yz-3} and~\eqref{eq:xytriz},
\begin{align}
a_{\btri}(\bx,\by,\bz)
=&\ \sum_{l=1}^{r}\sum_{j=1}^{l-1}
\bz_{<j}\partial^{-\rm wt(\bx)}(z_j)\bx z_{j+1}\cdots z_{l-1}\partial^{-\rm wt(\by)}(z_l)\by\bz_{>l}
\label{eq:a-xyz-1}
\\
&\ +\sum_{l=1}^{r}
\bz_{<l}\partial^{-\rm wt(\bx\by)}(z_l)\bx\by\bz_{>l}
\label{eq:a-xyz-2}
\\
&\ +\sum_{l=1}^{r}\sum_{t=1}^{r-l}
\bz_{<l}\partial^{-\rm wt(\by)}(z_l)\by z_{l+1}\cdots z_{l+t-1}\partial^{-\rm wt(\bx)}(z_{l+t})\bx\bz_{>l+t}.
\label{eq:a-xyz-3}
\end{align}
By symmetry,
\begin{align}
a_{\btri}(\by,\bx,\bz)
=&\ \sum_{l=1}^{r}\sum_{j=1}^{l-1}
\bz_{<j}\partial^{-\rm wt(\by)}(z_j)\by z_{j+1}\cdots z_{l-1}\partial^{-\rm wt(\bx)}(z_l)\bx\bz_{>l}
\label{eq:a-yxz-1}
\\
&\ +\sum_{l=1}^{r}
\bz_{<l}\partial^{-\rm wt(\bx\by)}(z_l)\by\bx\bz_{>l}
\label{eq:a-yxz-2}
\\
&\ +\sum_{l=1}^{r}\sum_{t=1}^{r-l}
\bz_{<l}\partial^{-\rm wt(\bx)}(z_l)\bx z_{l+1}\cdots z_{l+t-1}\partial^{-\rm wt(\by)}(z_{l+t})\by\bz_{>l+t}.
\label{eq:a-yxz-3}
\end{align}

Now we compare the two  double-sum terms in \eqref{eq:a-xyz-1} and \eqref{eq:a-xyz-3}.
In~\eqref{eq:a-xyz-1}, setting $t=l-j$, $\eqref{eq:a-xyz-1}$ be written as
\[
\sum_{j=1}^{r}\sum_{t=1}^{r-j}
\bz_{<j}\partial^{-\rm wt(\bx)}(z_j)\bx z_{j+1}\cdots z_{j+t-1}\partial^{-\rm wt(\by)}(z_{j+t})\by\bz_{>j+t},
\]
which is exactly~\eqref{eq:a-yxz-3}.
Likewise, reindexing~\eqref{eq:a-xyz-3} shows that the difference between the two terms is 0 in $\eqref{eq:a-xyz-3}$ and $\eqref{eq:a-yxz-1}$.

Therefore we conclude that
\begin{align*}
a_{\btri}(\bx,\by,\bz)-a_{\btri}(\by,\bx,\bz)
\overset{\eqref{eq:a-xyz-2}-\eqref{eq:a-yxz-2}}{=}\sum_{l=1}^{r}\bz_{<l}\partial^{-\rm wt(\bx\by)}(z_l)(\bx\by-\by\bx)\bz_{>l},
\end{align*}
which is exactly the left-hand side of~\meqref{eq:aim2} by~\eqref{eq:lhsfinal}.
This completes the proof of~\meqref{eq:aim2} and the whole proof.
\end{proof}

\begin{lemma}\mlabel{lem:PN-right-generator}
 Restricting~\meqref{eq:winser}  yields a map
\begin{equation}
\btri:\mathfrak g(A)\otimes V(A)\longrightarrow V(A),
\quad
u\otimes x\longmapsto u\btri x.
\end{equation}
In particular, we have
\begin{equation}
\btri:V(A)\otimes V(A)\longrightarrow V(A).
\mlabel{eq:winser1}
\end{equation}
\end{lemma}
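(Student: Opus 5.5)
The plan is to first prove the special case \eqref{eq:winser1}, $V(A)\btri V(A)\subseteq V(A)$, by a direct prefix-weight computation. Then we extend to all of $\mathfrak g(A)$ on the left using the post-Lie identities of Lemma~\ref{lem:ncpost}.

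\emph{Step 1: generators.} Let $\bx=x_1\cdots x_p$ and $\by=y_1\cdots y_q$ be monomials spanning $V(A)$, so $\wt(\bx)=\wt(\by)=-1$. Each summand of $\bx\btri\by$ has the form
\[
\bw=y_1\cdots y_{j-1}\,\partial(y_j)\,x_1\cdots x_p\,y_{j+1}\cdots y_q .
\]
By Remark~\ref{rem:PNstabletri}, $\wt(\bw)=-1$. It remains to check that every proper left prefix of $\bw$ has nonnegative weight. There are three kinds of prefix.
\begin{enumerate}
\item A prefix $y_1\cdots y_m$ with $m<j$ is a proper prefix of $\by$, since $j\le q$. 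Its weight is therefore $\ge 0$.
\item A prefix $y_1\cdots y_{j-1}\partial(y_j)x_1\cdots x_s$ with $0\le s<p$ has weight $\wt(y_1\cdots y_j)+1+\wt(x_1\cdots x_s)$. If $j<q$, then $\wt(y_1\cdots y_j)\ge0$. If $j=q$, then $\wt(y_1\cdots y_j)=-1$. In both cases $\wt(y_1\cdots y_j)+1\ge 0$. Also $\wt(x_1\cdots x_s)\ge 0$, being empty or a proper prefix of $\bx$. So the total weight is $\ge0$.
\item A prefix $y_1\cdots y_{j-1}\partial(y_j)\bx\, y_{j+1}\cdots y_m$ with $j\le m<q$ has weight $\wt(y_1\cdots y_m)+1-1=\wt(y_1\cdots y_m)\ge 0$.
\end{enumerate}
Hence each summand is a basis monomial of $V(A)$, and bilinearity gives \eqref{eq:winser1}.

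\emph{Step 2: extension to $\mathfrak g(A)$.} Let $\mathfrak h:=\{u\in\mathfrak g(A)\mid u\btri V(A)\subseteq V(A)\}$. This is a linear subspace, and by Step~1 it contains $V(A)$. Since $\mathfrak g(A)$ is spanned by iterated brackets of elements of $V(A)$, it suffices to show that $\mathfrak h$ is closed under $\lpp{-}{-}$. For $u,v\in\mathfrak h$ and $x\in V(A)$, the second post-Lie axiom from Lemma~\ref{lem:ncpost} gives
\[
\lpp{u}{v}\btri x = u\btri(v\btri x)-(u\btri v)\btri x - v\btri(u\btri x)+(v\btri u)\btri x .
\]
The terms $u\btri(v\btri x)$ and $v\btri(u\btri x)$ lie in $V(A)$ because $u,v\in\mathfrak h$. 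For the other two terms we need $u\btri v\in\mathfrak g(A)$ and, more precisely, that $u\btri v$ lies in $\mathfrak h$.

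\emph{Step 3: the main obstacle.} The difficulty is to show that $\mathfrak g(A)\btri\mathfrak g(A)\subseteq\mathfrak g(A)$ with control good enough to feed the induction. I would avoid this by strengthening the claim: prove by induction on bracket length that for every Lie monomial $u$ in $V(A)$, the operator $u\btri-$ maps $V(A)$ into $V(A)$, and also preserves $\mathfrak g(A)$. The second property reduces to generators by the first post-Lie (derivation) axiom \eqref{eq:aim1}: if $u\btri V(A)\subseteq V(A)$, then $u\btri-$ is a derivation of $\lpp{-}{-}$, so it maps $\mathfrak g(A)=\mathrm{Lie}(V(A))$ into itself. In Step~2 we then know $u\btri v\in\mathfrak g(A)$. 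However, we still need $(u\btri v)\btri x\in V(A)$, which is circular.

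To break the circularity, I would instead argue directly and more simply. By Remark~\ref{rem:PNstabletri}, every $u\in\mathfrak g(A)$ is a combination of words whose length is a sum of lengths of generators. The weight of $u\btri x$ is $\wt(x)$, because each monomial of $u$ has weight $\wt(u)\le -1$ and $\partial^{-\wt(u)}$ compensates. Rather than tracking brackets, observe that $\lpp{u}{v}\btri x$ equals, by \eqref{eq:lhsfinal}, $\sum_l x_{<l}\partial^{-\wt(uv)}(x_l)(uv-vu)x_{>l}$. Set $m=-\wt(uv)\ge 2$. Each monomial $\bw$ of $uv-vu$ is a concatenation of $m$ generators $g_1\cdots g_m$ from $V(A)$. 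The prefix check from Step~1 then runs the same way: after the raised letter, the running weight is $\wt(x_1\cdots x_l)+m$, and each completed generator lowers it by exactly $1$, while proper prefixes of generators contribute nonnegatively. So the running weight stays $\ge \wt(x_1\cdots x_l)+1\ge 0$ until $\bw$ ends, where it returns to $\wt(x_1\cdots x_l)$.

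Thus the cleanest route is a direct prefix count for arbitrary concatenations of generators. It uses \eqref{eq:lhsfinal} iterated, or simply the definition \eqref{eq:winser} applied to monomials of $u$ that lie in the span of products of elements of $V(A)$. This proves the lemma without the circular induction.
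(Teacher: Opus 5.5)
Your final argument is correct and is essentially the paper's proof. You expand $u$ into concatenations $g_1\cdots g_m$ of generators from $V(A)$, each of weight $-m$ with every nonempty proper prefix of weight $\ge 1-m$. You then check the prefix weights of each summand $y_1\cdots y_{j-1}\,\partial^{m}(y_j)\,g_1\cdots g_m\,y_{j+1}\cdots y_q$, which is exactly the paper's four-case count; your Step~1 is the case $m=1$.

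The bracket induction of Steps~2--3 is unnecessary and can simply be deleted. Also, the fact that $\mathfrak g(A)$ is spanned by such concatenations comes from expanding commutators in $\bnca$, not from Remark~\ref{rem:PNstabletri}.
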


\begin{proof}
It suffices to prove the first statement.  By bilinearity, we may assume
that $x$ is a monomial in $V(A)$ and that $u$ is an iterated Lie
monomial in monomial elements
\[
u_1,\ldots,u_\ell\in V(A).
\]
Since $\mathfrak g(A)$ is realized as a Lie subalgebra of the associative
algebra $\bnca$ equipped with the commutator bracket, this iterated Lie
monomial expands in $\bnca$ as a finite $\bfk$-linear combination of
associative words of the form
\[
\overline{w}
=
u_{\sigma(1)}\cdots u_{\sigma(\ell)}
\]
for suitable permutations $\sigma$.  Thus it suffices to prove
\[
\overline{w}\btri x\in V(A)
\]
for every such word $\overline{w}$ and every monomial $x \in V(A)$. Fix such a word and write $\overline{w}=u_1\cdots u_\ell$.  Since each
$u_i$ has weight $-1$, we have
\begin{equation}
\wt(\overline{w})=-\ell.
\mlabel{eq:right-generator-total-weight-wbar}
\end{equation}
Moreover, every nonempty proper left prefix of $\overline{w}$ has weight at
least $1-\ell$.  Indeed, such a prefix has the form
\[
\overline{w}'=u_1\cdots u_{m-1}u'_m,
\]
where either $u'_m$ is a nonempty proper left prefix of $u_m$, or
$u'_m=u_m$ and $m<\ell$.  In the first case, $\wt(u'_m)\ge0$, so
\[
\wt(\overline{w}')=-(m-1)+\wt(u'_m)\ge1-\ell.
\]
In the second case,
\[
\wt(\overline{w}')=-m\ge1-\ell.
\]
Now write
\[
x=y_1\cdots y_q\in V(A).
\]
For $1\le s\le q$, set
\[
P_x(s):=\sum_{r=1}^{s}\wt(y_r).
\]
Since $x\in V(A)$, the proper-prefix and total-weight conditions give
\begin{equation}
P_x(s)\ge0 \quad \text{for } 1\le s<q,
\quad
P_x(q)=-1.
\mlabel{eq:right-generator-prefix-x}
\end{equation}

\noindent By \eqref{eq:winser} and \eqref{eq:right-generator-total-weight-wbar},
$\overline{w}\btri x$ is a sum of words
\[
V_j
:=
y_1\cdots y_{j-1}\,
\partial^{\ell}(y_j)\,
\overline{w}\,
y_{j+1}\cdots y_q,
\quad
1\le j\le q.
\]
We prove that every $V_j$ belongs to $V(A)$.  First,
\[
\wt(V_j)=\wt(x)+\ell+\wt(\overline{w})=-1+\ell-\ell=-1.
\]
It remains to check the proper-prefix condition.  Let $V'$ be a nonempty
proper left prefix of $V_j$.  According to the position where $V'$ ends,
there are four cases.

\smallskip
\noindent{\bf Case 1.} The prefix $V'$ ends before the shifted letter
$\partial^\ell(y_j)$.  Then
\[
V'=y_1\cdots y_s
\]
for some $1\le s<j$.  Since $s<q$, \eqref{eq:right-generator-prefix-x}
gives
\[
\wt(V')=P_x(s)\ge0.
\]

\smallskip
\noindent{\bf Case 2.} The prefix $V'$ ends exactly at the shifted letter
$\partial^\ell(y_j)$.  Then
\[
V'=y_1\cdots y_{j-1}\partial^\ell(y_j),
\]
and hence
\[
\wt(V')=P_x(j)+\ell.
\]
If $j<q$, then $P_x(j)\ge0$ by \eqref{eq:right-generator-prefix-x}.  If
$j=q$, then
\[
\wt(V')=P_x(q)+\ell=-1+\ell\ge0.
\]
Thus $\wt(V')\ge0$.

\smallskip
\noindent{\bf Case 3.} The prefix $V'$ ends inside the inserted word
$\overline{w}$.  Then
\[
V'=y_1\cdots y_{j-1}\partial^\ell(y_j)\overline{w}',
\]
where $\overline{w}'$ is a nonempty proper left prefix of $\overline{w}$.
By the prefix bound for $\overline{w}$,
\[
\wt(V')
=
P_x(j)+\ell+\wt(\overline{w}')
\ge
P_x(j)+1.
\]
If $j<q$, then $P_x(j)\ge0$.  If $j=q$, then $P_x(q)+1=0$.  Hence
$\wt(V')\ge0$.

\smallskip
\noindent{\bf Case 4.} The prefix $V'$ ends at or after the whole inserted
word $\overline{w}$ but before the end of $V_j$.  Then
\[
V'=y_1\cdots y_{j-1}\partial^\ell(y_j)
\overline{w}
y_{j+1}\cdots y_s
\]
for some $j\le s<q$.  Using $\wt(\overline{w})=-\ell$, we obtain
\[
\wt(V')=P_x(s)\ge0
\]
by \eqref{eq:right-generator-prefix-x}.

\smallskip
In all cases, every nonempty proper left prefix of $V_j$ has nonnegative
weight.  Since also $\wt(V_j)=-1$, we have $V_j\in V(A)$ for every
$1\le j\le q$.  Hence
\[
\overline{w}\btri x\in V(A).
\]
By the initial reduction and the bilinearity of $\btri$, the result follows
for all $u\in\mathfrak g(A)$ and all $x\in V(A)$.
\end{proof}

\begin{remark}
The subspace $V(A)$ is stable under $\btri$ by~\eqref{eq:winser1}.
However, it is not stable under the commutator bracket $\lpp{-}{-}$.
\mlabel{bncalie}
\end{remark}

\begin{lemma}
Restricting~\meqref{eq:winser} to $\mathfrak g(A)$ yields a product
\begin{equation}
\btri:\mathfrak g(A)\otimes\mathfrak g(A)\longrightarrow\mathfrak g(A).
\mlabel{eq:btri}
\end{equation}
\end{lemma}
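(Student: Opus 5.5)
We need to show that $u\btri v\in\mathfrak g(A)$ for all $u,v\in\mathfrak g(A)$. By bilinearity of $\btri$, it suffices to take $u\in\mathfrak g(A)$ arbitrary and $v$ an iterated Lie monomial in generators $x_1,\dots,x_n\in V(A)$. We then argue by induction on the Lie length $n$ of $v$ (the number of generators from $V(A)$ it involves). Since $\mathfrak g(A)$ is spanned by such iterated brackets, this covers everything.

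\textbf{Base case $n=1$.} Here $v=x_1\in V(A)$. Lemma~\ref{lem:PN-right-generator} gives $u\btri x_1\in V(A)\subseteq\mathfrak g(A)$ directly.

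\textbf{Inductive step.} Write $v=\lpp{v_1}{v_2}$, where $v_1,v_2\in\mathfrak g(A)$ are iterated Lie monomials of strictly smaller Lie length. We use the derivation identity \eqref{eq:aim1} from Step~1 of Lemma~\ref{lem:ncpost}. It holds on all of $\bnca_{\le0}$, and $\mathfrak g(A)\subseteq\bnca_{\le0}$, so
\[
u\btri\lpp{v_1}{v_2}=\lpp{u\btri v_1}{v_2}+\lpp{v_1}{u\btri v_2}.
\]
By the induction hypothesis, $u\btri v_1$ and $u\btri v_2$ lie in $\mathfrak g(A)$. Each term on the right is therefore a commutator of two elements of the Lie subalgebra $\mathfrak g(A)$, hence lies in $\mathfrak g(A)$. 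This closes the induction. Extending bilinearly yields the map \eqref{eq:btri}.

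\textbf{Where the difficulty lies.} The only point needing care is the input from Lemma~\ref{lem:PN-right-generator}: it must hold for an arbitrary $u\in\mathfrak g(A)$, not just for $u\in V(A)$. Otherwise the induction could not treat $u$ as fixed. That lemma was proved precisely in this generality, so no further obstacle arises.

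For bookkeeping, the induction can equivalently be run on the length grading \eqref{eq:deggrad}. In that form one uses the fact that the brackets $\lpp{v_1}{v_2}$ of homogeneous components span $\mathfrak g(A)^{(n)}$ beyond the component $V(A)^{(n)}$.
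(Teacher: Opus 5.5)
Your proof is correct, but it is organized differently from the paper's.

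The paper works in two stages. First it combines the derivation identity \eqref{eq:aim1} with $V(A)\btri V(A)\subseteq V(A)$ to obtain $V(A)\btri\mathfrak g(A)\subseteq\mathfrak g(A)$. This first stage is exactly your induction, but with $u$ restricted to $V(A)$. Second, it inducts on the bracket depth of the \emph{first} argument. It rewrites $\lpp{x}{\by}\btri\bz$ via the second post-Lie identity as $x\btri(\by\btri\bz)-(x\btri\by)\btri\bz-\by\btri(x\btri\bz)+(\by\btri x)\btri\bz$. The full strength of Lemma~\ref{lem:PN-right-generator} enters only at the end, to place $\by\btri x$ in $V(A)$.

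You instead apply Lemma~\ref{lem:PN-right-generator} for arbitrary $u\in\mathfrak g(A)$ from the start. That makes the paper's second stage unnecessary: a single induction on the second argument, using only the derivation axiom, suffices. The paper's argument also needs the general form of that lemma, so your route costs nothing and is shorter. It also avoids a point the paper leaves implicit. To apply the induction hypothesis to $(x\btri\by)\btri\bz$, the paper needs $x\btri\by$ to have the same bracket depth as $\by$, which it never states.
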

\begin{proof}
Recall that $\mathfrak g(A)$ is the Lie subalgebra of $\left(\bnca_{\leq 0},\lpp{-}{-}\right)$ generated by $V(A)$.
By Lemma~\mref{lem:ncpost} together with~\eqref{eq:winser1}, we have
\[
x \btri \lpp{y}{z}
=
\lpp{x\btri y}{z}+\lpp{y}{x\btri z}\in \mathfrak g(A),
\quad
\forall\, x,y,z\in V(A).
\]
Therefore, the product $\btri$ in \eqref{eq:winser1} can be extended to
\begin{equation}
\btri:V(A)\otimes \mathfrak g(A)\to \mathfrak g(A).
\mlabel{eq:1step}
\end{equation}

We next extend~\eqref{eq:1step} to~\eqref{eq:btri} by induction on the number of iterated Lie brackets appearing in the first multi-factor of $\blacktriangleright_{w}$. More precisely, the induction is on the bracket depth of the first argument.
In the inductive step, we prove the assertion for a bracket
$\lpp{x}{\by}$ with $x\in V(A)$ and with $\by$ of
strictly smaller bracket depth.
The initial step is precisely~\eqref{eq:1step}.
For the inductive step, let $x\in V(A)$, let $\by\in\mathfrak g(A)$ be of smaller bracket depth, and let $\bz\in\mathfrak g(A)$.
Applying Lemma~\mref{lem:ncpost} once again, we obtain
\begin{align}
\lpp{x}{\by}\btri \bz
={}&
x\btri (\by\btri\bz)
-(x\btri \by)\btri \bz
-\by\btri (x\btri \bz)
+(\by\btri x)\btri \bz.
\end{align}
Notice that
\[
\by\btri \bz,\,
x\btri \by,\,
x\btri \bz
\in \mathfrak g(A).
\]
The first statement comes from  the  induction hypothesis, the second and the third one come  from ~\eqref{eq:1step}.
Moreover, by Lemma~\mref{lem:PN-right-generator}, we have
\begin{equation}
\by\btri x\in V(A).
\mlabel{eq:fourth}
\end{equation}
It follows that the four terms $x\btri (\by\btri \bz)$, $(x\btri \by)\btri \bz$, $\by\btri (x\btri \bz)$, and $(\by\btri x)\btri \bz$ belong to $\mathfrak g(A)$. Here, the first  statement comes from~\eqref{eq:1step}, the second and the third come from the induction hypothesis, while the fourth  comes from~\eqref{eq:fourth} and~\eqref{eq:1step}.
Therefore,
\[
\lpp{x}{\by}\btri \bz\in \mathfrak g(A),
\]
which completes the inductive step.
\end{proof}

We can now combine the above  post-Lie identities with the closure result
just proved. This gives the desired post-Lie algebra on the planar multi-index
side.

\begin{theorem}
The triple $\left(\mathfrak g(A),\btri,\lpp{-}{-}\right)$ is a post-Lie algebra.
\mlabel{postPN}
\end{theorem}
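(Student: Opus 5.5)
The theorem follows by combining the two preceding results: the post-Lie axioms already hold on the larger space, and $\mathfrak g(A)$ is closed under both operations. The plan is to restrict rather than re-verify any identity.

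First, I would record that $\mathfrak g(A)$ is, by definition, a Lie subalgebra of $\left(\bnca_{\le0},\lpp{-}{-}\right)$. Hence $\left(\mathfrak g(A),\lpp{-}{-}\right)$ is a Lie algebra with the bracket inherited from the commutator. Second, the preceding lemma (the restriction \eqref{eq:btri}) shows that $\btri$ maps $\mathfrak g(A)\otimes\mathfrak g(A)$ into $\mathfrak g(A)$. So $\btri$ is a well-defined binary product on $\mathfrak g(A)$, and all iterated expressions appearing in the axioms, such as $x\btri(y\btri z)$, $(x\btri y)\btri z$, $[x,y]\btri z$ and $[x\btri y,z]$, stay inside $\mathfrak g(A)$ whenever $x,y,z\in\mathfrak g(A)$.

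Third, I would invoke Lemma~\ref{lem:ncpost}. The two post-Lie identities, the derivation property
\[
x\btri\lpp{y}{z}=\lpp{x\btri y}{z}+\lpp{y}{x\btri z}
\]
and the associator identity
\[
\lpp{x}{y}\btri z=a_{\btri}(x,y,z)-a_{\btri}(y,x,z),
\]
hold for all $x,y,z\in\bnca_{\le0}$. They therefore hold in particular for $x,y,z\in\mathfrak g(A)\subseteq\bnca_{\le0}$. Lemma~\ref{lem:ncpost} proves these identities on monomials, and they extend to arbitrary elements by bilinearity of $\btri$ and of the bracket. Since the operations on $\mathfrak g(A)$ are the restrictions of those on $\bnca_{\le0}$, both sides of each identity are computed identically in $\mathfrak g(A)$ and in $\bnca_{\le0}$, so the identities hold in $\mathfrak g(A)$.

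There is no real obstacle left at this stage. The substantive work was the closure statement \eqref{eq:btri}, whose inductive argument on bracket depth itself used Lemma~\ref{lem:ncpost} and Lemma~\ref{lem:PN-right-generator}. The only point to state explicitly is that a subspace of a post-Lie algebra which is closed under both the bracket and $\btri$ is a post-Lie subalgebra, because the axioms are universally quantified polynomial identities in the two operations.
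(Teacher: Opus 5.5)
Your proposal is correct and follows the paper's proof essentially verbatim. In both, $\mathfrak g(A)$ is a Lie subalgebra of $\bigl(\bnca_{\le0},\lpp{-}{-}\bigr)$, the closure result \eqref{eq:btri} makes $\btri$ a product on $\mathfrak g(A)$, and the two post-Lie identities of Lemma~\ref{lem:ncpost} then hold there by restriction.
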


\begin{proof}
Since $\mathfrak g(A)\subseteq \bnca_{\le0}$ is a Lie subalgebra, the two
post-Lie identities in Lemma~\mref{lem:ncpost} hold for elements of
$\mathfrak g(A)$.  \eqref{eq:btri} shows that $\btri$ is closed on
$\mathfrak g(A)$.  Hence $\left(\mathfrak g(A),\btri,\lpp{-}{-}\right)$ is a post-Lie algebra.
\end{proof}

Now let us recall the Guin-Oudom construction~\cite{FK2014} for post-Lie algebras.
Let $(\bg,\blacktriangleright,\lpp{-}{-})$ be a post-Lie algebra, and let $U(\bg)$ denote its universal enveloping algebra.
The post-Lie product $\blacktriangleright$ extends uniquely to a bilinear map
\[
\blacktriangleright: U(\bg)\otimes U(\bg)\to U(\bg),
\]
characterized by
\begin{align*}
\etree \blacktriangleright X =&\ X,\quad
(xX)\blacktriangleright y
= x\blacktriangleright (X\blacktriangleright y)
   - (x\blacktriangleright X)\blacktriangleright y,\\
X\blacktriangleright (YZ)
=&\ \sum_{(X)}
   \bigl(X_{(1)}\blacktriangleright Y\bigr)
   \bigl(X_{(2)}\blacktriangleright Z\bigr),
\quad
\forall\, x,y\in\bg,\; X,Y,Z\in U(\bg).
\end{align*}
The associated Grossman--Larson product on $U(\bg)$ is then defined by
\[
X\star Y:=X_{(1)}\bigl(X_{(2)}\blacktriangleright Y\bigr).
\]
With this product, $(U(\bg),\star,\Delta_{\shuffle},\etree,\varepsilon)$ becomes a Hopf algebra. Applying the above Guin--Oudom construction to the post-Lie algebra
$(\mathfrak g(A),\btri,\lpp{-}{-})$
 yields a Hopf algebra
\[
\Big(U\big(\mathfrak g(A)\big),\star,\Delta_{\shuffle},\etree,\varepsilon\Big).
\]

The grading in \meqref{eq:deggrad} extends naturally to $U\big(\mathfrak g(A)\big)$, with each graded component being finite-dimensional.
Hence, taking the graded linear dual gives rise to a Hopf algebra
\begin{equation*}
\mathcal H_{\rm PLOT}^{A}
:=
\Big(U\big(\mathfrak g(A)\big),\star,\Delta_{\shuffle},\etree,\varepsilon\Big)^{g}.
\end{equation*}

\subsection{Planar fertility map}\label{sec:order}
This subsection is devoted to the planar fertility map from the MKW Hopf algebra to the planar LOT Hopf algebra.
Let us first establish a linear order on the vertex set of a planar rooted tree.

Denote by $\OT^A$ (resp. $\OF^A$) the set of $A$-decorated planar rooted trees (resp. forests).
For an $A$-decorated planar rooted tree $t\in \OT^A$, we define an order on the vertex set $V(t)$ recursively on ${\rm dep}(t)\geq 0$.
For the initial step of ${\rm dep}(t) = 0$, we have $t=\bullet_a$ for some $a\in A$ and the ordering is immediate.
For the inductive step of ${\rm dep}(t) \geq 1$, suppose $t=B_a^+(t_1,\ldots, t_m)$, where $B_a^+$ is the grafting operator given in~\meqref{eq:bplus} below.
We order the vertices of $t$ by listing the root of $t$ first, followed by the vertices of the subtrees $t_1, \ldots, t_m$ in sequence, where each subtree is ordered recursively according to this same rule. We call this order the \textbf{root-subtree order} on $V(t)$. This total order on the vertices is obtained by visiting them from left to right starting from the root, as shown below on an example.
\vskip-2mm
\begin{equation*}\label{total-order}
\begin{tikzpicture}[x=0.75pt,y=0.75pt,yscale=-0.7,xscale=0.7]
\draw  [fill={rgb, 255:red, 0; green, 0; blue, 0 }  ,fill opacity=1 ] (329.67,334.83) .. controls (329.67,331.98) and (331.98,329.67) .. (334.83,329.67) .. controls (337.69,329.67) and (340,331.98) .. (340,334.83) .. controls (340,337.69) and (337.69,340) .. (334.83,340) .. controls (331.98,340) and (329.67,337.69) .. (329.67,334.83) -- cycle ;
\draw  [fill={rgb, 255:red, 0; green, 0; blue, 0 }  ,fill opacity=1 ] (359.67,284.83) .. controls (359.67,281.98) and (361.98,279.67) .. (364.83,279.67) .. controls (367.69,279.67) and (370,281.98) .. (370,284.83) .. controls (370,287.69) and (367.69,290) .. (364.83,290) .. controls (361.98,290) and (359.67,287.69) .. (359.67,284.83) -- cycle ;
\draw  [fill={rgb, 255:red, 0; green, 0; blue, 0 }  ,fill opacity=1 ] (300,284.83) .. controls (300,281.98) and (302.31,279.67) .. (305.17,279.67) .. controls (308.02,279.67) and (310.33,281.98) .. (310.33,284.83) .. controls (310.33,287.69) and (308.02,290) .. (305.17,290) .. controls (302.31,290) and (300,287.69) .. (300,284.83) -- cycle ;
\draw  [fill={rgb, 255:red, 0; green, 0; blue, 0 }  ,fill opacity=1 ] (330,235.17) .. controls (330,232.31) and (332.31,230) .. (335.17,230) .. controls (338.02,230) and (340.33,232.31) .. (340.33,235.17) .. controls (340.33,238.02) and (338.02,240.33) .. (335.17,240.33) .. controls (332.31,240.33) and (330,238.02) .. (330,235.17) -- cycle ;
\draw  [fill={rgb, 255:red, 0; green, 0; blue, 0 }  ,fill opacity=1 ] (360,235.17) .. controls (360,232.31) and (362.31,230) .. (365.17,230) .. controls (368.02,230) and (370.33,232.31) .. (370.33,235.17) .. controls (370.33,238.02) and (368.02,240.33) .. (365.17,240.33) .. controls (362.31,240.33) and (360,238.02) .. (360,235.17) -- cycle ;
\draw  [fill={rgb, 255:red, 0; green, 0; blue, 0 }  ,fill opacity=1 ] (389.67,235.17) .. controls (389.67,232.31) and (391.98,230) .. (394.83,230) .. controls (397.69,230) and (400,232.31) .. (400,235.17) .. controls (400,238.02) and (397.69,240.33) .. (394.83,240.33) .. controls (391.98,240.33) and (389.67,238.02) .. (389.67,235.17) -- cycle ;
\draw  [fill={rgb, 255:red, 0; green, 0; blue, 0 }  ,fill opacity=1 ] (330,185.17) .. controls (330,182.31) and (332.31,180) .. (335.17,180) .. controls (338.02,180) and (340.33,182.31) .. (340.33,185.17) .. controls (340.33,188.02) and (338.02,190.33) .. (335.17,190.33) .. controls (332.31,190.33) and (330,188.02) .. (330,185.17) -- cycle ;
\draw    (305,284.67) -- (335.33,335) [line width=1] ;
\draw    (335.67,235.33) -- (366,285.67) [line width=1] ;
\draw    (365.67,235.33) -- (366,285) [line width=1] ;
\draw    (335,186) -- (335.33,235.67) [line width=1] ;
\draw    (395.33,235) -- (365.33,285) [line width=1] ;
\draw    (365.33,285) -- (335.33,335) [line width=1] ;
\draw    (161,340.67) .. controls (209.33,341) and (246.67,341.67) .. (300,340.33) .. controls (353.33,339) and (322,335) .. (310.67,311) .. controls (299.33,287) and (294.67,294.33) .. (304,279) .. controls (313.33,263.67) and (330.67,305) .. (332,317) .. controls (333.33,329) and (334,315) .. (354,295) .. controls (374,275) and (332.67,263.67) .. (332.67,240.33) .. controls (332.67,217) and (322.67,226.33) .. (333.33,183) .. controls (344,139.67) and (342.67,218.33) .. (348,239.67) .. controls (353.33,261) and (360,269.67) .. (361.33,257.67) .. controls (362.67,245.67) and (356.67,252.33) .. (368.67,219.67) .. controls (380.67,187) and (372.67,247.67) .. (370,261) .. controls (367.33,274.33) and (382.67,255) .. (391.33,231) .. controls (400,207) and (410,235) .. (397.33,261) .. controls (384.67,287) and (369.33,295) .. (356.67,319.67) .. controls (344,344.33) and (364.67,339) .. (388.67,339) .. controls (412.19,339) and (484.37,342.2) .. (511.72,338.56) ;
\draw [shift={(513.33,338.33)}, rotate = 171.25] [color={rgb, 255:red, 0; green, 0; blue, 0 }  ][line width=0.75]    (10.93,-3.29) .. controls (6.95,-1.4) and (3.31,-0.3) .. (0,0) .. controls (3.31,0.3) and (6.95,1.4) .. (10.93,3.29)   ;
\draw (311.67,225.07) node [anchor=north west][inner sep=0.75pt]    {$v_{4}$};
\draw (328.33,340.07) node [anchor=north west][inner sep=0.75pt]    {$v_{1}$};
\draw (280.33,274.4) node [anchor=north west][inner sep=0.75pt]    {$v_{2}$};
\draw (338.33,274.4) node [anchor=north west][inner sep=0.75pt]    {$v_{3}$};
\draw (310.33,173.07) node [anchor=north west][inner sep=0.75pt]    {$v_{5}$};
\draw (345.67,211.4) node [anchor=north west][inner sep=0.75pt]    {$v_{6}$};
\draw (378.33,210.73) node [anchor=north west][inner sep=0.75pt]    {$v_{7}$};
\end{tikzpicture}
\end{equation*}

\noindent The Munthe-Kaas--Wright (MKW) Hopf algebra
\[
(\mathcal{H}_{\mathrm{MKW}}^{A},\shuffle,\Delta_{\mathrm{MKW}},\etree,\varepsilon)
\]
is defined on $\bfk\OF^A$, see~\mcite{LMK2013,MKW2008}.
Its product is given by the shuffle product $\shuffle$ of planar rooted trees.
The coproduct $\Delta_{\mathrm{MKW}}$ is first defined on a planar rooted tree $t\in\OT^A$ in terms of the combinatorial notion of left-admissible cuts:
\begin{equation}
  \Delta_{\mathrm{MKW}}(t)=\sum_{c\in \mathrm{LAdm}(t)} P^{c}(t)\otimes R^{c}(t).
\mlabel{eq:mkwe}
\end{equation}
Here, for a left-admissible cut $c$, the term $P^{c}(t)$ denotes the forest consisting of the branches removed by the cut, while $R^{c}(t)$ denotes the remaining trunk. Among the subtrees cut off from the same vertex, the planar order is preserved; by contrast, the forests cut off from different vertices are shuffled together to form $P^{c}(t)$. The coproduct in~\meqref{eq:mkwe} also admits a recursive formula with respect to the number of vertices \mcite{Rahm2023,Rahm2021}
\begin{align}
\Delta_{\mathrm{MKW}}(t)=t\otimes \etree+ (\id\otimes B_{a}^{+})\Delta_{\mathrm{MKW}}\big(B_{a}^{-}(t)\big),\quad \forall a\in A.
\mlabel{eq:cocyle}
\end{align}
The coproduct in~\meqref{eq:mkwe} is extended to a forest $f\in\bf\OF^A$ by setting
\[
\Delta_{\mathrm{MKW}}(f)
=
(\mathrm{id}\otimes B^{-})
\Big(
\Delta_{\mathrm{MKW}}\big(B^{+}_{a}(f)\big)
-
B^{+}_{a}(f)\otimes \etree
\Big),
\quad \forall a\in A,
\]
where
\begin{equation}
B^{+}_{a}:\bfk\OF^A\to\bfk\OT^A
\mlabel{eq:bplus}
\end{equation}
is the grafting operator that attaches all trees in the input forest to a new root decorated by $a\in A$, and
 $B_a^{-}:\bfk\OT^A\to \bfk\OF^A$ is the transpose of $B_a^{+}$
\begin{equation*}
\langle B_a^{+}(f),\,g\rangle_{\mathcal F}
=
\langle f,\,B_a^{-}(g)\rangle_{\mathcal F},
\quad
\forall\,f,g\in\OF^A .
\end{equation*}
Here we employ the natural pairing
\begin{equation*}
\langle f,g\rangle_{\mathcal F}:=\delta_{f,g},
\qquad \forall\, f,g\in \OF^A.
\end{equation*}

\noindent The following example illustrates the coproduct.

\begin{exam}
{%
\thinmuskip=2mu\relax
\medmuskip=3mu\relax
\thickmuskip=4mu\relax
\[
\begin{aligned}
\Delta_{\mathrm{MKW}}\!\left(\Tbcad\right)
={}&
\etree \otimes \Tbcad
+\Tsingle{b} \otimes \Tacd
+\Tsingle{d} \otimes \Tabc
+\Tsingle{b} \shuffle \Tsingle{d} \otimes \Tac \\
&\quad
+\Tsingle{b}\,\Tcd \otimes \Tsingle{a}
+\Tbcad \otimes \etree.
\end{aligned}
\]
}
\end{exam}

\noindent The MKW Hopf algebra
\[(\mathcal{H}_{\mathrm{MKW}}^A, \shuffle, \Delta_{\mathrm{MKW}},\etree,\varepsilon)\]
is connected and graded by the number of vertices, which  admits a connected and graded dual Hopf algebra
\[\big((\mathcal{H}_{\mathrm{MKW}}^A)^{g}, \star, \Delta_\shuffle,1^{\ast},\varepsilon\big).\]
Elements in $(\mathcal{H}_{\mathrm{MKW}}^A)^{g}$ can simply be identified with the linear combinations of $\OF^A$, and the grading is again by the number of vertices.
 Here the coproduct $\Delta_\shuffle$ is the deshuffle coproduct, and the product $\star$ is the planar Grossman-Larson product given by
\[f\star g:= B^{-} \big(f\curvearrowright_{l} B^{+}_{a}(g)\big), \quad \forall f,g\in \OF^A, a\in A,\]
where $\curvearrowright_{l}$ is the left grafting operation.
In more detail, the left grafting operation on planar rooted trees is
\begin{equation}
\curvearrowright_{l}:\bfk \OT^A \otimes \bfk \OT^A  \to \bfk \OT^A, \quad t_1\otimes t_2 \mapsto t_{1}\curvearrowright_{l} t_{2} :=
\sum_{v_k\in V(t_{2})} t_{1}\curvearrowright_{v_k} t_{2}.
\mlabel{eq:lgraft}
\end{equation}
The above $\curvearrowright_{l}$ can be extended to
$$\curvearrowright_{l}:\bfk \OF^A \otimes \bfk \OT^A  \to \bfk \OT^A, \quad t_1\cdots t_n\otimes t \mapsto  t_1\cdots t_n\curvearrowright_{l} t,$$
where $t_1\cdots t_n\curvearrowright_{l} t$ is the sum over all possibilities to grow the trees $t_{1},\ldots,t_{n}$ out of vertices of $t$ as the left-most child, with the extra condition that if two or more trees $t_{1},\ldots,t_{k}$, $k\geq 2$ grow out of the same vertex $v$ in $t$, then they need to have the same order as children of $t$ as they have as trees in $t_1\cdots t_n$.
\begin{exam}
\[
\tikz[baseline=(current bounding box.center), scale=0.7]{
  \node[circle,fill=black,inner sep=1.5pt] (a) at (0,0) {};
  \node[below=1pt, font=\footnotesize] at (a) {$a$};
  \node[circle,fill=black,inner sep=1.5pt] (b) at (0,0.6) {};
  \node[above=1pt, font=\footnotesize] at (b) {$b$};
  \draw (a)--(b);
}
\;\curvearrowrightl\;
\tikz[baseline=(current bounding box.center), scale=0.7]{
  \node[circle,fill=black,inner sep=1.5pt] (e) at (0,0) {};
  \node[below=1pt, font=\footnotesize] at (e) {$e$};
  \node[circle,fill=black,inner sep=1.5pt] (c) at (-0.5,0.6) {};
  \node[above left=1pt, font=\footnotesize] at (c) {$c$};
  \node[circle,fill=black,inner sep=1.5pt] (d) at (0.5,0.6) {};
  \node[above right=1pt, font=\footnotesize] at (d) {$d$};
  \draw (e)--(c) (e)--(d);
}
\;=\;
\tikz[baseline=(current bounding box.center), scale=0.7]{
  \node[circle,fill=black,inner sep=1.5pt] (e) at (0,0) {};
  \node[below=1pt, font=\footnotesize] at (e) {$e$};
  \node[circle,fill=black,inner sep=1.5pt] (c) at (-0.5,0.6) {};
  \node[left=1pt, font=\footnotesize] at (c) {$c$};
  \node[circle,fill=black,inner sep=1.5pt] (d) at (0.5,0.6) {};
  \node[right=1pt, font=\footnotesize] at (d) {$d$};
  \node[circle,fill=black,inner sep=1.5pt] (a) at (-0.5,1.2) {};
  \node[right=1pt, font=\footnotesize] at (a) {$a$};
  \node[circle,fill=black,inner sep=1.5pt] (b) at (-0.5,1.8) {};
  \node[left=1pt, font=\footnotesize] at (b) {$b$};
  \draw (e)--(c) (e)--(d) (c)--(a) (a)--(b);
}
\;+\;
\tikz[baseline=(current bounding box.center), scale=0.7]{
  \node[circle,fill=black,inner sep=1.5pt] (e) at (0,0) {};
  \node[below=1pt, font=\footnotesize] at (e) {$e$};
  \node[circle,fill=black,inner sep=1.5pt] (c) at (-0.7,0.6) {};
  \node[above=1pt, font=\footnotesize] at (c) {$c$};
  \node[circle,fill=black,inner sep=1.5pt] (a) at (0,0.6) {};
  \node[right=1pt, font=\footnotesize] at (a) {$a$};
  \node[circle,fill=black,inner sep=1.5pt] (d) at (0.7,0.6) {};
  \node[above=1pt, font=\footnotesize] at (d) {$d$};
  \node[circle,fill=black,inner sep=1.5pt] (b) at (0,1.2) {};
  \node[above=1pt, font=\footnotesize] at (b) {$b$};
  \draw (e)--(c) (e)--(a) (e)--(d) (a)--(b);
}
\;+\;
\tikz[baseline=(current bounding box.center), scale=0.7]{
  \node[circle,fill=black,inner sep=1.5pt] (e) at (0,0) {};
  \node[below=1pt, font=\footnotesize] at (e) {$e$};
  \node[circle,fill=black,inner sep=1.5pt] (c) at (-0.5,0.6) {};
  \node[left=1pt, font=\footnotesize] at (c) {$c$};
  \node[circle,fill=black,inner sep=1.5pt] (d) at (0.5,0.6) {};
  \node[right=1pt, font=\footnotesize] at (d) {$d$};
  \node[circle,fill=black,inner sep=1.5pt] (a) at (0.5,1.2) {};
  \node[left=1pt, font=\footnotesize] at (a) {$a$};
  \node[circle,fill=black,inner sep=1.5pt] (b) at (0.5,1.8) {};
  \node[right=1pt, font=\footnotesize] at (b) {$b$};
  \draw (e)--(c) (e)--(d) (d)--(a) (a)--(b);
}
\,,
\]
\[
\Bigl(
\TIKZ{
  \begin{scope}[scale=0.77]
    \node[dot] (a1) at (0,0) {};
    \node[right=1pt] at (a1) {$a$};
    \node[dot] (b1) at (0.65,0) {};
    \node[right=1pt] at (b1) {$b$};
  \end{scope}
}
\Bigr)
\;\curvearrowright_{l}\;
\TIKZ{
  \begin{scope}[scale=0.77]
    \node[dot] (c) at (0,0) {};
    \node[right=1pt] at (c) {$c$};
    \node[dot] (d) at (0,0.95) {};
    \node[right=1pt] at (d) {$d$};
    \draw (c)--(d);
  \end{scope}
}
=
\TIKZ{
  \begin{scope}[scale=0.77]
    \node[dot] (c) at (0,0) {};
    \node[right=1pt] at (c) {$c$};
    \node[dot] (d) at (0,0.95) {};
    \node[right=1pt] at (d) {$d$};
    \node[dot] (a) at (-0.70,1.75) {};
    \node[right=1pt] at (a) {$a$};
    \node[dot] (b) at ( 0.70,1.75) {};
    \node[right=1pt] at (b) {$b$};
    \draw (c)--(d) (d)--(a) (d)--(b);
  \end{scope}
}
+
\TIKZ{
  \begin{scope}[scale=0.77]
    \node[dot] (c) at (0,0) {};
    \node[right=1pt] at (c) {$c$};
    \node[dot] (a) at (-0.85,0.95) {};
    \node[right=1pt] at (a) {$a$};
    \node[dot] (d) at ( 0.55,0.95) {};
    \node[right=1pt] at (d) {$d$};
    \node[dot] (b) at ( 0.55,1.75) {};
    \node[right=1pt] at (b) {$b$};
    \draw (c)--(a) (c)--(d) (d)--(b);
  \end{scope}
}
+
\TIKZ{
  \begin{scope}[scale=0.77]
    \node[dot] (c) at (0,0) {};
    \node[right=1pt] at (c) {$c$};
    \node[dot] (b) at (-0.85,0.95) {};
    \node[right=1pt] at (b) {$b$};
    \node[dot] (d) at ( 0.55,0.95) {};
    \node[right=1pt] at (d) {$d$};
    \node[dot] (a) at ( 0.55,1.75) {};
    \node[right=1pt] at (a) {$a$};
    \draw (c)--(b) (c)--(d) (d)--(a);
  \end{scope}
}
+
\TIKZ{
  \begin{scope}[scale=0.77]
    \node[dot] (c) at (0,0) {};
    \node[right=1pt] at (c) {$c$};
    \node[dot] (a) at (-0.95,0.95) {};
    \node[right=1pt] at (a) {$a$};
    \node[dot] (b) at ( 0.00,0.95) {};
    \node[right=1pt] at (b) {$b$};
    \node[dot] (d) at ( 0.95,0.95) {};
    \node[right=1pt] at (d) {$d$};
    \draw (c)--(a) (c)--(b) (c)--(d);
  \end{scope}
}
\,.
\]
\end{exam}

The primitive elements of the graded dual Hopf algebra
$(\mathcal{H}_{\mathrm{MKW}}^A)^{g}$ form the free post-Lie algebra~\cite{KL2023,MKW2008}
\[
\bigl({\rm PSL}(A),\ \curvearrowright_{l},\ \lpp{-}{-}\bigr)
\]
on the set $A$.
Since $(\mathcal{H}_{\mathrm{MKW}}^A)^{g}$ as a coalgebra is the shuffle coalgebra
$\bigl(T(\bfk \OT^A),\Delta_{\shuffle}\bigr)$,
the primitive elements of the shuffle coalgebra $T(\OT^A)$  are the elements of the free Lie algebra ${\rm Lie}(\OT^A)$ generated by the vector space $\bfk\OT^A$. We next define the linear map
\begin{equation}
\Phi_{\mathrm{fer}}^{\mathcal T}: \bfk \OT^A \longrightarrow V(A),
\quad
t\longmapsto z^{d(v_1)}_{f(v_1)-1}\cdots z^{d(v_p)}_{f(v_p)-1},
\mlabel{eq:treelev}
\end{equation}
called {\bf planar tree fertility map}.
Here $v_1<\cdots<v_p$ are the vertices of $t$, ordered  by the root-subtree order in Section~\ref{sec:order},
$d(v_i)\in A$ is the decoration of $v_i$, and $f(v_i)$ denotes the fertility of $v_i$ (i.e. the number of the incoming edges).\\

The next proposition establishes the fundamental correspondence between $A$-decorated planar rooted trees and monomials  $\bzia\in V(A)$. Let $\widetilde{\OF}^{A}$ denote the $\bfk$-linear span of
$A$-decorated planar rooted forests with ordered free edges.  A free edge is
an ordered open child-slot attached to a vertex but not yet attached to
another vertex.  For a vertex in such a forest, ordinary child edges and free
edges are counted together when computing its number of child slots.  The
ordering of the free edges is part of the planar structure and records the
left-to-right order in which later vertices may be attached.

\begin{prop}\mlabel{pro:Phibi}
The  planar tree fertility map in \eqref{eq:treelev} is  a linear isomorphism onto $V(A)$.
\end{prop}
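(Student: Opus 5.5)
The proof is the classical correspondence between planar rooted trees and Łukasiewicz words under preorder traversal, with each letter recording fertility minus one. Both $\bfk\OT^A$ and $V(A)$ come with distinguished bases ($\OT^A$ and the monomials satisfying the prefix conditions), and $\Phi_{\mathrm{fer}}^{\mathcal T}$ sends basis elements to monomials. It therefore suffices to prove three things: the map lands in $V(A)$, it is injective on trees, and every admissible monomial is hit.

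\emph{Step 1: the image lies in $V(A)$.} Let $t$ have vertices $v_1<\cdots<v_p$ in root-subtree order. Each non-root vertex has exactly one incoming edge from its parent, so $\sum_{r=1}^p f(v_r)=p-1$. Hence the total weight is $\sum_r (f(v_r)-1)=-1$.

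For a proper prefix $v_1,\dots,v_q$ with $q<p$, I will use one structural property of the root-subtree order: every initial segment is closed under taking parents. This follows by induction on the recursive definition, since a parent is listed before its children. The $q-1$ non-root vertices of the prefix therefore account for $q-1$ edges inside the prefix. Moreover, $v_{q+1}$ has its parent in the prefix, which gives at least one further edge leaving the prefix. Thus $\sum_{r\le q} f(v_r)\ge q$, and the prefix weight $\sum_{r\le q}(f(v_r)-1)$ is $\ge 0$.

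\emph{Step 2: the inverse map.} This is where the space $\widetilde{\OF}^{A}$ of forests with ordered free edges enters. Given an admissible monomial $z_{i_1}^{a_1}\cdots z_{i_p}^{a_p}$, I read its letters from left to right and build a tree step by step:
\begin{itemize}
\item the first letter creates a root decorated by $a_1$ carrying $i_1+1$ free edges;
\item each subsequent letter $z_{i_r}^{a_r}$ creates a vertex decorated by $a_r$ with $i_r+1$ free edges, and plugs it into the leftmost free edge of the most recently created vertex that still has a free edge.
\end{itemize}
The unused free edges form a stack. An induction shows that after $q$ letters the number of free edges equals $\sum_{r\le q} i_r+1$. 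By the prefix condition defining $V(A)$, this number is $\ge 1$ for $q<p$, so the next attachment is always possible. After the last letter the number is $0$, so the result is an honest planar rooted tree $\Psi(\bzia)\in\OT^A$ with fertilities $f(v_r)=i_r+1$.

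\emph{Step 3: $\Psi$ and $\Phi_{\mathrm{fer}}^{\mathcal T}$ are mutually inverse.} This is the main obstacle, because it requires matching the stack rule with the recursive definition of the root-subtree order. I would prove by induction on $\mathrm{dep}(t)$ two claims:
\begin{itemize}
\item Running the procedure on $\Phi_{\mathrm{fer}}^{\mathcal T}(B_a^+(t_1,\dots,t_m))$ first opens $m$ free edges at the root. It then processes the word of $t_1$; by Step 1 that word is a complete admissible block of total weight $-1$, so it consumes exactly the leftmost root slot and rebuilds $t_1$ there without touching the remaining root slots. The blocks of $t_2,\dots,t_m$ are handled in the same way, so $\Psi\circ\Phi_{\mathrm{fer}}^{\mathcal T}=\id$.
\item Conversely, the vertices of $\Psi(\bzia)$ are created in exactly root-subtree order, because the stack discipline attaches each new vertex to the deepest open vertex, which is the depth-first left-to-right rule. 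Hence $\Phi_{\mathrm{fer}}^{\mathcal T}\circ\Psi=\id$ on the monomial basis of $V(A)$.
\end{itemize}
Together, these give a bijection between $\OT^A$ and the monomial basis of $V(A)$. Extending linearly yields the claimed linear isomorphism $\bfk\OT^A\to V(A)$.
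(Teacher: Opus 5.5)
Your proof is correct and follows essentially the same route as the paper. The paper builds the same inverse $\Psi$: it reads the letters left to right and attaches each new corolla to the leftmost available free edge. Your rule of using the most recently created vertex that still has a free edge is the same stack discipline as the paper's ordered list of free edges. It then uses the same count $1+\sum_{r\le q} i_r$, together with the prefix and total-weight conditions, to obtain a single tree with no leftover free edges. Your Step 1 (initial segments of the root-subtree order are closed under parents) and your depth induction in Step 3 make explicit two points the paper only asserts: that the image of $\Phi_{\mathrm{fer}}^{\mathcal T}$ lies in $V(A)$, and that $\Psi\circ\Phi_{\mathrm{fer}}^{\mathcal T}=\id$.
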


\begin{proof}
We construct the inverse map on monomials.  For $a\in A$ and $i\ge -1$,
let $C_i^a$ be the one-vertex corolla decorated by $a$, endowed with
$i+1$ ordered free edges. Define a linear map
\begin{equation}\mlabel{eq:psi}
\Psi:\bnca \longrightarrow \widetilde{\OF}^{A},
\quad
z_{i_1}^{a_1}\cdots z_{i_p}^{a_p}\longmapsto \Psi(w),
\end{equation}
where $\Psi(w)$ is constructed recursively on the length $p=|w|$ as
follows.  If $p=1$, set $\Psi(z_{i_1}^{a_1})=C_{i_1}^{a_1}$.  In particular, if
$z_{i_1}^{a_1}\in V(A)$, then $i_1=-1$, and
$C_{-1}^{a_1}$ has no free edge.  Hence $\Psi(z_{-1}^{a_1})$ is the ordinary
one-vertex tree decorated by $a_1$.
For $p>1$, start with $C_{i_1}^{a_1}$.  Suppose that the forest associated
with the prefix
$z_{i_1}^{a_1}\cdots z_{i_{j-1}}^{a_{j-1}}$
has already been constructed.  To insert the $j$-th letter, attach the root
of $C_{i_j}^{a_j}$ to the leftmost available free edge.  If no free edge is
available, put $C_{i_j}^{a_j}$ as a new connected component on the right.
The used free edges are then replaced by the ordered free edges of
$C_{i_j}^{a_j}$.  After $p$ steps, this gives
$\Psi\bigl(z_{i_1}^{a_1}\cdots z_{i_p}^{a_p}\bigr).$\\

Now let $w=z_{i_1}^{a_1}\cdots z_{i_p}^{a_p}\in V(A)$.  After
the first $q$ letters have been read, the number of available free edges is
$1+\sum_{r=1}^{q} i_r .$
Since $w\in V(A)$, the prefix condition implies that this number
is positive for $1\le q<p$.  Hence no new connected component is created after the first
one.  Since $w$ has total weight $-1$, after the last letter the number
of free edges is
$1+\sum_{r=1}^{p} i_r=0.$
Therefore $\Psi(w)$ is an ordinary connected $A$-decorated planar rooted
tree. It remains to check that $\Psi$ is inverse to
$\Phi_{\mathrm{fer}}^{\mathcal T}$.  By construction,
\[
\Phi_{\mathrm{fer}}^{\mathcal T}\big(\Psi(w)\big)=w,
\quad
w\in V(A),
\]
because the vertices of $\Psi(w)$ are read in the root-subtree order, and a
vertex decorated by $a$ with $i+1$ child slots yields the letter
$z_i^a$.  Conversely, if $t\in\OT^A$, then applying $\Psi$ to
$\Phi_{\mathrm{fer}}^{\mathcal T}(t)$ reconstructs $t$, since the
recursive construction fills the free edges exactly in the root-subtree order.
Thus
\[
\Phi_{\mathrm{fer}}^{\mathcal T}\circ \Psi=\id_{V(A)},
\quad
\Psi\circ \Phi_{\mathrm{fer}}^{\mathcal T}=\id_{\OT^A}.
\]
Hence the planar tree fertility map is a linear isomorphism.
\end{proof}

The restriction of $\Phi_{\rm fer}$ to trees without free edges is exactly
$\Phi_{\mathrm{fer}}^{\mathcal T}$.  
We illustrate the inverse construction $\Psi$ in \meqref{eq:psi} with the following example.

\begin{exam} Consider
\[
\bzia
=
z_{1}^{a_{1}} z_{-1}^{a_{2}} z_{2}^{a_{3}} z_{0}^{a_{4}}
z_{-1}^{a_{5}} z_{-1}^{a_{6}} z_{-1}^{a_{7}}.
\]
Then the process of obtaining $\Psi(\bzia)$ is as follows:

\begingroup
\newcommand{\PsiConstructionPicture}{%
\draw  [fill={rgb, 255:red, 0; green, 0; blue, 0 }  ,fill opacity=1 ] (360.33,483.83) .. controls (360.33,480.98) and (362.65,478.67) .. (365.5,478.67) .. controls (368.35,478.67) and (370.67,480.98) .. (370.67,483.83) .. controls (370.67,486.69) and (368.35,489) .. (365.5,489) .. controls (362.65,489) and (360.33,486.69) .. (360.33,483.83) -- cycle ;
\draw  [fill={rgb, 255:red, 0; green, 0; blue, 0 }  ,fill opacity=1 ] (390.33,433.83) .. controls (390.33,430.98) and (392.65,428.67) .. (395.5,428.67) .. controls (398.35,428.67) and (400.67,430.98) .. (400.67,433.83) .. controls (400.67,436.69) and (398.35,439) .. (395.5,439) .. controls (392.65,439) and (390.33,436.69) .. (390.33,433.83) -- cycle ;
\draw  [fill={rgb, 255:red, 0; green, 0; blue, 0 }  ,fill opacity=1 ] (330.67,433.83) .. controls (330.67,430.98) and (332.98,428.67) .. (335.83,428.67) .. controls (338.69,428.67) and (341,430.98) .. (341,433.83) .. controls (341,436.69) and (338.69,439) .. (335.83,439) .. controls (332.98,439) and (330.67,436.69) .. (330.67,433.83) -- cycle ;
\draw  [fill={rgb, 255:red, 0; green, 0; blue, 0 }  ,fill opacity=1 ] (360.67,384.17) .. controls (360.67,381.31) and (362.98,379) .. (365.83,379) .. controls (368.69,379) and (371,381.31) .. (371,384.17) .. controls (371,387.02) and (368.69,389.33) .. (365.83,389.33) .. controls (362.98,389.33) and (360.67,387.02) .. (360.67,384.17) -- cycle ;
\draw  [fill={rgb, 255:red, 0; green, 0; blue, 0 }  ,fill opacity=1 ] (390.67,384.17) .. controls (390.67,381.31) and (392.98,379) .. (395.83,379) .. controls (398.69,379) and (401,381.31) .. (401,384.17) .. controls (401,387.02) and (398.69,389.33) .. (395.83,389.33) .. controls (392.98,389.33) and (390.67,387.02) .. (390.67,384.17) -- cycle ;
\draw  [fill={rgb, 255:red, 0; green, 0; blue, 0 }  ,fill opacity=1 ] (459.67,484.83) .. controls (459.67,481.98) and (461.98,479.67) .. (464.83,479.67) .. controls (467.69,479.67) and (470,481.98) .. (470,484.83) .. controls (470,487.69) and (467.69,490) .. (464.83,490) .. controls (461.98,490) and (459.67,487.69) .. (459.67,484.83) -- cycle ;
\draw  [fill={rgb, 255:red, 0; green, 0; blue, 0 }  ,fill opacity=1 ] (360.67,334.17) .. controls (360.67,331.31) and (362.98,329) .. (365.83,329) .. controls (368.69,329) and (371,331.31) .. (371,334.17) .. controls (371,337.02) and (368.69,339.33) .. (365.83,339.33) .. controls (362.98,339.33) and (360.67,337.02) .. (360.67,334.17) -- cycle ;
\draw    (335.67,433.67) -- (366,484) ;
\draw    (366.33,384.33) -- (396.67,434.67) ;
\draw    (396.33,384.33) -- (396.67,434) ;
\draw    (365.67,335) -- (366,384.67) ;
\draw    (426,384) -- (396,434) ;
\draw    (396,434) -- (366,484) ;
\draw  [fill={rgb, 255:red, 0; green, 0; blue, 0 }  ,fill opacity=1 ] (619.67,484.83) .. controls (619.67,481.98) and (621.98,479.67) .. (624.83,479.67) .. controls (627.69,479.67) and (630,481.98) .. (630,484.83) .. controls (630,487.69) and (627.69,490) .. (624.83,490) .. controls (621.98,490) and (619.67,487.69) .. (619.67,484.83) -- cycle ;
\draw  [fill={rgb, 255:red, 0; green, 0; blue, 0 }  ,fill opacity=1 ] (649.67,434.83) .. controls (649.67,431.98) and (651.98,429.67) .. (654.83,429.67) .. controls (657.69,429.67) and (660,431.98) .. (660,434.83) .. controls (660,437.69) and (657.69,440) .. (654.83,440) .. controls (651.98,440) and (649.67,437.69) .. (649.67,434.83) -- cycle ;
\draw  [fill={rgb, 255:red, 0; green, 0; blue, 0 }  ,fill opacity=1 ] (590,434.83) .. controls (590,431.98) and (592.31,429.67) .. (595.17,429.67) .. controls (598.02,429.67) and (600.33,431.98) .. (600.33,434.83) .. controls (600.33,437.69) and (598.02,440) .. (595.17,440) .. controls (592.31,440) and (590,437.69) .. (590,434.83) -- cycle ;
\draw  [fill={rgb, 255:red, 0; green, 0; blue, 0 }  ,fill opacity=1 ] (620,385.17) .. controls (620,382.31) and (622.31,380) .. (625.17,380) .. controls (628.02,380) and (630.33,382.31) .. (630.33,385.17) .. controls (630.33,388.02) and (628.02,390.33) .. (625.17,390.33) .. controls (622.31,390.33) and (620,388.02) .. (620,385.17) -- cycle ;
\draw  [fill={rgb, 255:red, 0; green, 0; blue, 0 }  ,fill opacity=1 ] (650,385.17) .. controls (650,382.31) and (652.31,380) .. (655.17,380) .. controls (658.02,380) and (660.33,382.31) .. (660.33,385.17) .. controls (660.33,388.02) and (658.02,390.33) .. (655.17,390.33) .. controls (652.31,390.33) and (650,388.02) .. (650,385.17) -- cycle ;
\draw  [fill={rgb, 255:red, 0; green, 0; blue, 0 }  ,fill opacity=1 ] (679.67,385.17) .. controls (679.67,382.31) and (681.98,380) .. (684.83,380) .. controls (687.69,380) and (690,382.31) .. (690,385.17) .. controls (690,388.02) and (687.69,390.33) .. (684.83,390.33) .. controls (681.98,390.33) and (679.67,388.02) .. (679.67,385.17) -- cycle ;
\draw  [fill={rgb, 255:red, 0; green, 0; blue, 0 }  ,fill opacity=1 ] (620,335.17) .. controls (620,332.31) and (622.31,330) .. (625.17,330) .. controls (628.02,330) and (630.33,332.31) .. (630.33,335.17) .. controls (630.33,338.02) and (628.02,340.33) .. (625.17,340.33) .. controls (622.31,340.33) and (620,338.02) .. (620,335.17) -- cycle ;
\draw    (595,434.67) -- (625.33,485) ;
\draw    (625.67,385.33) -- (656,435.67) ;
\draw    (655.67,385.33) -- (656,435) ;
\draw    (625,336) -- (625.33,385.67) ;
\draw    (685.33,385) -- (655.33,435) ;
\draw    (655.33,435) -- (625.33,485) ;
\draw  [fill={rgb, 255:red, 0; green, 0; blue, 0 }  ,fill opacity=1 ] (110.33,484.17) .. controls (110.33,481.31) and (112.65,479) .. (115.5,479) .. controls (118.35,479) and (120.67,481.31) .. (120.67,484.17) .. controls (120.67,487.02) and (118.35,489.33) .. (115.5,489.33) .. controls (112.65,489.33) and (110.33,487.02) .. (110.33,484.17) -- cycle ;
\draw  [fill={rgb, 255:red, 0; green, 0; blue, 0 }  ,fill opacity=1 ] (140.33,434.17) .. controls (140.33,431.31) and (142.65,429) .. (145.5,429) .. controls (148.35,429) and (150.67,431.31) .. (150.67,434.17) .. controls (150.67,437.02) and (148.35,439.33) .. (145.5,439.33) .. controls (142.65,439.33) and (140.33,437.02) .. (140.33,434.17) -- cycle ;
\draw  [fill={rgb, 255:red, 0; green, 0; blue, 0 }  ,fill opacity=1 ] (80.67,434.17) .. controls (80.67,431.31) and (82.98,429) .. (85.83,429) .. controls (88.69,429) and (91,431.31) .. (91,434.17) .. controls (91,437.02) and (88.69,439.33) .. (85.83,439.33) .. controls (82.98,439.33) and (80.67,437.02) .. (80.67,434.17) -- cycle ;
\draw  [fill={rgb, 255:red, 0; green, 0; blue, 0 }  ,fill opacity=1 ] (110.67,384.5) .. controls (110.67,381.65) and (112.98,379.33) .. (115.83,379.33) .. controls (118.69,379.33) and (121,381.65) .. (121,384.5) .. controls (121,387.35) and (118.69,389.67) .. (115.83,389.67) .. controls (112.98,389.67) and (110.67,387.35) .. (110.67,384.5) -- cycle ;
\draw  [fill={rgb, 255:red, 0; green, 0; blue, 0 }  ,fill opacity=1 ] (200,485.83) .. controls (200,482.98) and (202.31,480.67) .. (205.17,480.67) .. controls (208.02,480.67) and (210.33,482.98) .. (210.33,485.83) .. controls (210.33,488.69) and (208.02,491) .. (205.17,491) .. controls (202.31,491) and (200,488.69) .. (200,485.83) -- cycle ;
\draw  [fill={rgb, 255:red, 0; green, 0; blue, 0 }  ,fill opacity=1 ] (239.67,485.83) .. controls (239.67,482.98) and (241.98,480.67) .. (244.83,480.67) .. controls (247.69,480.67) and (250,482.98) .. (250,485.83) .. controls (250,488.69) and (247.69,491) .. (244.83,491) .. controls (241.98,491) and (239.67,488.69) .. (239.67,485.83) -- cycle ;
\draw  [fill={rgb, 255:red, 0; green, 0; blue, 0 }  ,fill opacity=1 ] (110.67,334.5) .. controls (110.67,331.65) and (112.98,329.33) .. (115.83,329.33) .. controls (118.69,329.33) and (121,331.65) .. (121,334.5) .. controls (121,337.35) and (118.69,339.67) .. (115.83,339.67) .. controls (112.98,339.67) and (110.67,337.35) .. (110.67,334.5) -- cycle ;
\draw    (85.67,434) -- (116,484.33) ;
\draw    (116.33,384.67) -- (146.67,435) ;
\draw    (146.33,384.67) -- (146.67,434.33) ;
\draw    (115.67,335.33) -- (116,385) ;
\draw    (176,384.33) -- (146,434.33) ;
\draw    (146,434.33) -- (116,484.33) ;
\draw  [fill={rgb, 255:red, 0; green, 0; blue, 0 }  ,fill opacity=1 ] (450.33,274.17) .. controls (450.33,271.31) and (452.65,269) .. (455.5,269) .. controls (458.35,269) and (460.67,271.31) .. (460.67,274.17) .. controls (460.67,277.02) and (458.35,279.33) .. (455.5,279.33) .. controls (452.65,279.33) and (450.33,277.02) .. (450.33,274.17) -- cycle ;
\draw  [fill={rgb, 255:red, 0; green, 0; blue, 0 }  ,fill opacity=1 ] (480.33,224.17) .. controls (480.33,221.31) and (482.65,219) .. (485.5,219) .. controls (488.35,219) and (490.67,221.31) .. (490.67,224.17) .. controls (490.67,227.02) and (488.35,229.33) .. (485.5,229.33) .. controls (482.65,229.33) and (480.33,227.02) .. (480.33,224.17) -- cycle ;
\draw  [fill={rgb, 255:red, 0; green, 0; blue, 0 }  ,fill opacity=1 ] (420.67,224.17) .. controls (420.67,221.31) and (422.98,219) .. (425.83,219) .. controls (428.69,219) and (431,221.31) .. (431,224.17) .. controls (431,227.02) and (428.69,229.33) .. (425.83,229.33) .. controls (422.98,229.33) and (420.67,227.02) .. (420.67,224.17) -- cycle ;
\draw  [fill={rgb, 255:red, 0; green, 0; blue, 0 }  ,fill opacity=1 ] (450.67,174.5) .. controls (450.67,171.65) and (452.98,169.33) .. (455.83,169.33) .. controls (458.69,169.33) and (461,171.65) .. (461,174.5) .. controls (461,177.35) and (458.69,179.67) .. (455.83,179.67) .. controls (452.98,179.67) and (450.67,177.35) .. (450.67,174.5) -- cycle ;
\draw  [fill={rgb, 255:red, 0; green, 0; blue, 0 }  ,fill opacity=1 ] (567.67,274.83) .. controls (567.67,271.98) and (569.98,269.67) .. (572.83,269.67) .. controls (575.69,269.67) and (578,271.98) .. (578,274.83) .. controls (578,277.69) and (575.69,280) .. (572.83,280) .. controls (569.98,280) and (567.67,277.69) .. (567.67,274.83) -- cycle ;
\draw  [fill={rgb, 255:red, 0; green, 0; blue, 0 }  ,fill opacity=1 ] (607.67,274.83) .. controls (607.67,271.98) and (609.98,269.67) .. (612.83,269.67) .. controls (615.69,269.67) and (618,271.98) .. (618,274.83) .. controls (618,277.69) and (615.69,280) .. (612.83,280) .. controls (609.98,280) and (607.67,277.69) .. (607.67,274.83) -- cycle ;
\draw  [fill={rgb, 255:red, 0; green, 0; blue, 0 }  ,fill opacity=1 ] (527.67,274.83) .. controls (527.67,271.98) and (529.98,269.67) .. (532.83,269.67) .. controls (535.69,269.67) and (538,271.98) .. (538,274.83) .. controls (538,277.69) and (535.69,280) .. (532.83,280) .. controls (529.98,280) and (527.67,277.69) .. (527.67,274.83) -- cycle ;
\draw    (425.67,224) -- (456,274.33) ;
\draw    (456.33,174.67) -- (486.67,225) ;
\draw    (486.33,174.67) -- (486.67,224.33) ;
\draw    (455.67,125.33) -- (456,175) ;
\draw    (516,174.33) -- (486,224.33) ;
\draw    (486,224.33) -- (456,274.33) ;
\draw  [fill={rgb, 255:red, 0; green, 0; blue, 0 }  ,fill opacity=1 ] (80.33,274.17) .. controls (80.33,271.31) and (82.65,269) .. (85.5,269) .. controls (88.35,269) and (90.67,271.31) .. (90.67,274.17) .. controls (90.67,277.02) and (88.35,279.33) .. (85.5,279.33) .. controls (82.65,279.33) and (80.33,277.02) .. (80.33,274.17) -- cycle ;
\draw  [fill={rgb, 255:red, 0; green, 0; blue, 0 }  ,fill opacity=1 ] (110.33,224.17) .. controls (110.33,221.31) and (112.65,219) .. (115.5,219) .. controls (118.35,219) and (120.67,221.31) .. (120.67,224.17) .. controls (120.67,227.02) and (118.35,229.33) .. (115.5,229.33) .. controls (112.65,229.33) and (110.33,227.02) .. (110.33,224.17) -- cycle ;
\draw  [fill={rgb, 255:red, 0; green, 0; blue, 0 }  ,fill opacity=1 ] (50.67,224.17) .. controls (50.67,221.31) and (52.98,219) .. (55.83,219) .. controls (58.69,219) and (61,221.31) .. (61,224.17) .. controls (61,227.02) and (58.69,229.33) .. (55.83,229.33) .. controls (52.98,229.33) and (50.67,227.02) .. (50.67,224.17) -- cycle ;
\draw  [fill={rgb, 255:red, 0; green, 0; blue, 0 }  ,fill opacity=1 ] (147.67,274.83) .. controls (147.67,271.98) and (149.98,269.67) .. (152.83,269.67) .. controls (155.69,269.67) and (158,271.98) .. (158,274.83) .. controls (158,277.69) and (155.69,280) .. (152.83,280) .. controls (149.98,280) and (147.67,277.69) .. (147.67,274.83) -- cycle ;
\draw  [fill={rgb, 255:red, 0; green, 0; blue, 0 }  ,fill opacity=1 ] (227.67,274.83) .. controls (227.67,271.98) and (229.98,269.67) .. (232.83,269.67) .. controls (235.69,269.67) and (238,271.98) .. (238,274.83) .. controls (238,277.69) and (235.69,280) .. (232.83,280) .. controls (229.98,280) and (227.67,277.69) .. (227.67,274.83) -- cycle ;
\draw  [fill={rgb, 255:red, 0; green, 0; blue, 0 }  ,fill opacity=1 ] (267.67,274.83) .. controls (267.67,271.98) and (269.98,269.67) .. (272.83,269.67) .. controls (275.69,269.67) and (278,271.98) .. (278,274.83) .. controls (278,277.69) and (275.69,280) .. (272.83,280) .. controls (269.98,280) and (267.67,277.69) .. (267.67,274.83) -- cycle ;
\draw  [fill={rgb, 255:red, 0; green, 0; blue, 0 }  ,fill opacity=1 ] (187.67,274.83) .. controls (187.67,271.98) and (189.98,269.67) .. (192.83,269.67) .. controls (195.69,269.67) and (198,271.98) .. (198,274.83) .. controls (198,277.69) and (195.69,280) .. (192.83,280) .. controls (189.98,280) and (187.67,277.69) .. (187.67,274.83) -- cycle ;
\draw    (152.67,225.33) -- (153,275) ;
\draw    (55.67,224) -- (86,274.33) ;
\draw    (86.33,174.67) -- (116.67,225) ;
\draw    (116.33,174.67) -- (116.67,224.33) ;
\draw    (146,174.33) -- (116,224.33) ;
\draw    (116,224.33) -- (86,274.33) ;
\draw  [fill={rgb, 255:red, 0; green, 0; blue, 0 }  ,fill opacity=1 ] (440.33,64.17) .. controls (440.33,61.31) and (442.65,59) .. (445.5,59) .. controls (448.35,59) and (450.67,61.31) .. (450.67,64.17) .. controls (450.67,67.02) and (448.35,69.33) .. (445.5,69.33) .. controls (442.65,69.33) and (440.33,67.02) .. (440.33,64.17) -- cycle ;
\draw  [fill={rgb, 255:red, 0; green, 0; blue, 0 }  ,fill opacity=1 ] (509.67,64.83) .. controls (509.67,61.98) and (511.98,59.67) .. (514.83,59.67) .. controls (517.69,59.67) and (520,61.98) .. (520,64.83) .. controls (520,67.69) and (517.69,70) .. (514.83,70) .. controls (511.98,70) and (509.67,67.69) .. (509.67,64.83) -- cycle ;
\draw  [fill={rgb, 255:red, 0; green, 0; blue, 0 }  ,fill opacity=1 ] (410.67,14.17) .. controls (410.67,11.31) and (412.98,9) .. (415.83,9) .. controls (418.69,9) and (421,11.31) .. (421,14.17) .. controls (421,17.02) and (418.69,19.33) .. (415.83,19.33) .. controls (412.98,19.33) and (410.67,17.02) .. (410.67,14.17) -- cycle ;
\draw  [fill={rgb, 255:red, 0; green, 0; blue, 0 }  ,fill opacity=1 ] (549.67,64.83) .. controls (549.67,61.98) and (551.98,59.67) .. (554.83,59.67) .. controls (557.69,59.67) and (560,61.98) .. (560,64.83) .. controls (560,67.69) and (557.69,70) .. (554.83,70) .. controls (551.98,70) and (549.67,67.69) .. (549.67,64.83) -- cycle ;
\draw  [fill={rgb, 255:red, 0; green, 0; blue, 0 }  ,fill opacity=1 ] (629.67,64.83) .. controls (629.67,61.98) and (631.98,59.67) .. (634.83,59.67) .. controls (637.69,59.67) and (640,61.98) .. (640,64.83) .. controls (640,67.69) and (637.69,70) .. (634.83,70) .. controls (631.98,70) and (629.67,67.69) .. (629.67,64.83) -- cycle ;
\draw  [fill={rgb, 255:red, 0; green, 0; blue, 0 }  ,fill opacity=1 ] (669.67,64.83) .. controls (669.67,61.98) and (671.98,59.67) .. (674.83,59.67) .. controls (677.69,59.67) and (680,61.98) .. (680,64.83) .. controls (680,67.69) and (677.69,70) .. (674.83,70) .. controls (671.98,70) and (669.67,67.69) .. (669.67,64.83) -- cycle ;
\draw  [fill={rgb, 255:red, 0; green, 0; blue, 0 }  ,fill opacity=1 ] (589.67,64.83) .. controls (589.67,61.98) and (591.98,59.67) .. (594.83,59.67) .. controls (597.69,59.67) and (600,61.98) .. (600,64.83) .. controls (600,67.69) and (597.69,70) .. (594.83,70) .. controls (591.98,70) and (589.67,67.69) .. (589.67,64.83) -- cycle ;
\draw    (415.67,14) -- (446,64.33) ;
\draw    (484.33,14.67) -- (514.67,65) ;
\draw    (514.33,15.33) -- (514.67,65) ;
\draw    (554.67,14.67) -- (554.67,64.67) ;
\draw    (544.67,15) -- (514.67,65) ;
\draw    (475.33,14.33) -- (445.33,64.33) ;
\draw  [fill={rgb, 255:red, 0; green, 0; blue, 0 }  ,fill opacity=1 ] (55,63.83) .. controls (55,60.98) and (57.31,58.67) .. (60.17,58.67) .. controls (63.02,58.67) and (65.33,60.98) .. (65.33,63.83) .. controls (65.33,66.69) and (63.02,69) .. (60.17,69) .. controls (57.31,69) and (55,66.69) .. (55,63.83) -- cycle ;
\draw    (30.33,13.67) -- (60.67,64) ;
\draw    (90,14) -- (60,64) ;
\draw  [fill={rgb, 255:red, 0; green, 0; blue, 0 }  ,fill opacity=1 ] (159.67,63.83) .. controls (159.67,60.98) and (161.98,58.67) .. (164.83,58.67) .. controls (167.69,58.67) and (170,60.98) .. (170,63.83) .. controls (170,66.69) and (167.69,69) .. (164.83,69) .. controls (161.98,69) and (159.67,66.69) .. (159.67,63.83) -- cycle ;
\draw  [fill={rgb, 255:red, 0; green, 0; blue, 0 }  ,fill opacity=1 ] (199.67,63.83) .. controls (199.67,60.98) and (201.98,58.67) .. (204.83,58.67) .. controls (207.69,58.67) and (210,60.98) .. (210,63.83) .. controls (210,66.69) and (207.69,69) .. (204.83,69) .. controls (201.98,69) and (199.67,66.69) .. (199.67,63.83) -- cycle ;
\draw  [fill={rgb, 255:red, 0; green, 0; blue, 0 }  ,fill opacity=1 ] (279.67,63.83) .. controls (279.67,60.98) and (281.98,58.67) .. (284.83,58.67) .. controls (287.69,58.67) and (290,60.98) .. (290,63.83) .. controls (290,66.69) and (287.69,69) .. (284.83,69) .. controls (281.98,69) and (279.67,66.69) .. (279.67,63.83) -- cycle ;
\draw  [fill={rgb, 255:red, 0; green, 0; blue, 0 }  ,fill opacity=1 ] (319.67,63.83) .. controls (319.67,60.98) and (321.98,58.67) .. (324.83,58.67) .. controls (327.69,58.67) and (330,60.98) .. (330,63.83) .. controls (330,66.69) and (327.69,69) .. (324.83,69) .. controls (321.98,69) and (319.67,66.69) .. (319.67,63.83) -- cycle ;
\draw  [fill={rgb, 255:red, 0; green, 0; blue, 0 }  ,fill opacity=1 ] (239.67,63.83) .. controls (239.67,60.98) and (241.98,58.67) .. (244.83,58.67) .. controls (247.69,58.67) and (250,60.98) .. (250,63.83) .. controls (250,66.69) and (247.69,69) .. (244.83,69) .. controls (241.98,69) and (239.67,66.69) .. (239.67,63.83) -- cycle ;
\draw    (134.33,13.67) -- (164.67,64) ;
\draw    (164.33,14.33) -- (164.67,64) ;
\draw    (204.67,13.67) -- (204.67,63.67) ;
\draw    (194.67,14) -- (164.67,64) ;
\draw  [fill={rgb, 255:red, 0; green, 0; blue, 0 }  ,fill opacity=1 ] (110,63.83) .. controls (110,60.98) and (112.31,58.67) .. (115.17,58.67) .. controls (118.02,58.67) and (120.33,60.98) .. (120.33,63.83) .. controls (120.33,66.69) and (118.02,69) .. (115.17,69) .. controls (112.31,69) and (110,66.69) .. (110,63.83) -- cycle ;
\draw (668.33,360.73) node [anchor=north west][inner sep=0.75pt]    {$a_{7}$};
\draw (635.67,361.4) node [anchor=north west][inner sep=0.75pt]    {$a_{6}$};
\draw (600.33,323.07) node [anchor=north west][inner sep=0.75pt]    {$a_{5}$};
\draw (628.33,424.4) node [anchor=north west][inner sep=0.75pt]    {$a_{3}$};
\draw (570.33,424.4) node [anchor=north west][inner sep=0.75pt]    {$a_{2}$};
\draw (600.33,471.07) node [anchor=north west][inner sep=0.75pt]    {$a_{1}$};
\draw (601.67,375.07) node [anchor=north west][inner sep=0.75pt]    {$a_{4}$};
\draw (651,51.4) node [anchor=north west][inner sep=0.75pt]    {$a_{7}$};
\draw (611,51.4) node [anchor=north west][inner sep=0.75pt]    {$a_{6}$};
\draw (571,51.4) node [anchor=north west][inner sep=0.75pt]    {$a_{5}$};
\draw (491,51.4) node [anchor=north west][inner sep=0.75pt]    {$a_{3}$};
\draw (391,3.73) node [anchor=north west][inner sep=0.75pt]    {$a_{2}$};
\draw (421,50.4) node [anchor=north west][inner sep=0.75pt]    {$a_{1}$};
\draw (531,51.4) node [anchor=north west][inner sep=0.75pt]    {$a_{4}$};
\draw (35.67,50.07) node [anchor=north west][inner sep=0.75pt]    {$a_{1}$};
\draw (181,50.4) node [anchor=north west][inner sep=0.75pt]    {$a_{4}$};
\draw (141,50.4) node [anchor=north west][inner sep=0.75pt]    {$a_{3}$};
\draw (221,50.4) node [anchor=north west][inner sep=0.75pt]    {$a_{5}$};
\draw (261,50.4) node [anchor=north west][inner sep=0.75pt]    {$a_{6}$};
\draw (301,50.4) node [anchor=north west][inner sep=0.75pt]    {$a_{7}$};
\draw (93,51.4) node [anchor=north west][inner sep=0.75pt]    {$a_{2}$};
\draw (31,78.4) node [anchor=north west][inner sep=0.75pt]    {$\Psi \left( z_{1}^{a_{1}}\right)$};
\draw (410,78.4) node [anchor=north west][inner sep=0.75pt]    {$\Psi \left( z_{1}^{a_{1}} z_{-1}^{a_{2}}\right)$};
\draw (131,261.4) node [anchor=north west][inner sep=0.75pt]    {$a_{4}$};
\draw (169,261.4) node [anchor=north west][inner sep=0.75pt]    {$a_{5}$};
\draw (209,261.4) node [anchor=north west][inner sep=0.75pt]    {$a_{6}$};
\draw (249,261.4) node [anchor=north west][inner sep=0.75pt]    {$a_{7}$};
\draw (511,261.4) node [anchor=north west][inner sep=0.75pt]    {$a_{5}$};
\draw (549,261.4) node [anchor=north west][inner sep=0.75pt]    {$a_{6}$};
\draw (589,261.4) node [anchor=north west][inner sep=0.75pt]    {$a_{7}$};
\draw (10,292.4) node [anchor=north west][inner sep=0.75pt ]    {$\Psi \left( z_{1}^{a_{1}} z_{-1}^{a_{2}} z_{2}^{a_{3}}\right)$};
\draw (61,260.4) node [anchor=north west][inner sep=0.75pt]    {$a_{1}$};
\draw (31,213.73) node [anchor=north west][inner sep=0.75pt]    {$a_{2}$};
\draw (89,213.73) node [anchor=north west][inner sep=0.75pt]    {$a_{3}$};
\draw (41,500) node [anchor=north west][inner sep=0.75pt]    {$\Psi \left( z_{1}^{a_{1}} z_{-1}^{a_{2}} z_{2}^{a_{3}} z_{0}^{a_{4}} z_{-1}^{a_{5}}\right)$};
\draw (432.33,164.4) node [anchor=north west][inner sep=0.75pt]    {$a_{4}$};
\draw (431,260.4) node [anchor=north west][inner sep=0.75pt]    {$a_{1}$};
\draw (401,213.73) node [anchor=north west][inner sep=0.75pt]    {$a_{2}$};
\draw (459,213.73) node [anchor=north west][inner sep=0.75pt]    {$a_{3}$};
\draw (400,292.4) node [anchor=north west][inner sep=0.75pt ]    {$\Psi \left( z_{1}^{a_{1}} z_{-1}^{a_{2}} z_{2}^{a_{3}} z_{0}^{a_{4}}\right)$};
\draw (92.33,374.4) node [anchor=north west][inner sep=0.75pt]    {$a_{4}$};
\draw (91,470.4) node [anchor=north west][inner sep=0.75pt]    {$a_{1}$};
\draw (61,423.73) node [anchor=north west][inner sep=0.75pt]    {$a_{2}$};
\draw (119,423.73) node [anchor=north west][inner sep=0.75pt]    {$a_{3}$};
\draw (91,322.4) node [anchor=north west][inner sep=0.75pt]    {$a_{5}$};
\draw (181,471.4) node [anchor=north west][inner sep=0.75pt]    {$a_{6}$};
\draw (221,472.4) node [anchor=north west][inner sep=0.75pt]    {$a_{7}$};
\draw (342.33,374.07) node [anchor=north west][inner sep=0.75pt]    {$a_{4}$};
\draw (341,470.07) node [anchor=north west][inner sep=0.75pt]    {$a_{1}$};
\draw (311,423.4) node [anchor=north west][inner sep=0.75pt]    {$a_{2}$};
\draw (369,423.4) node [anchor=north west][inner sep=0.75pt]    {$a_{3}$};
\draw (341,322.07) node [anchor=north west][inner sep=0.75pt]    {$a_{5}$};
\draw (376.33,360.4) node [anchor=north west][inner sep=0.75pt]    {$a_{6}$};
\draw (441,471.4) node [anchor=north west][inner sep=0.75pt]    {$a_{7}$};
\draw (290,500) node [anchor=north west][inner sep=0.75pt ]    {$\Psi \left( z_{1}^{a_{1}} z_{-1}^{a_{2}} z_{2}^{a_{3}} z_{0}^{a_{4}} z_{-1}^{a_{5}} z_{-1}^{a_{6}}\right)$};
\draw (605,500) node [anchor=north west][inner sep=0.75pt ]    {$\Psi ( \bzia)$};
}

\[
\scalebox{0.68}{%
\begin{tikzpicture}[x=0.75pt,y=0.75pt,yscale=-0.9,xscale=0.9]
\path[use as bounding box] (-5,-10) rectangle (705,318);
\clip (-5,-10) rectangle (705,318);
\PsiConstructionPicture
\end{tikzpicture}%
}
\]

\[
\scalebox{0.68}{%
\begin{tikzpicture}[x=0.75pt,y=0.75pt,yscale=-0.9,xscale=0.9]
\path[use as bounding box] (-5,318) rectangle (705,545);
\clip (-5,318) rectangle (705,545);
\PsiConstructionPicture
\end{tikzpicture}%
}
\]
\endgroup

\end{exam}

\begin{remark}
Via the dual bases, the transpose of~\meqref{eq:treelev} can be
identified with the inverse basis map
\begin{equation}
J: V(A) \bij    \bfk\OT^A.
\end{equation}
Therefore, the image of a weight $-1$ monomial in $V(A)$ under $J$ is a planar rooted tree, rather than a linear combination. This is a very different situation compared to~\cite{ZHU2024}. For example,
\[
\begin{array}{@{}c@{\quad}c@{\quad}c@{}}
J(z_1^{a}z_0^{b}z_{-1}^{c}z_{-1}^{d})
=
\TIKZ{%
  \node[dot] (a) at (0,0) {}; \node[below=1pt] at (a) {$a$};
  \node[dot] (b) at (-\TreeXSpread,\TreeYStep) {}; \node[left=\TreeLabelSep] at (b) {$b$};
  \node[dot] (d) at ( \TreeXSpread,\TreeYStep) {}; \node[right=\TreeLabelSep] at (d) {$d$};
  \node[dot] (c) at (-\TreeXSpread,2*\TreeYStep) {}; \node[left=\TreeLabelSep] at (c) {$c$};
  \draw (a)--(b) (a)--(d) (b)--(c);
},
&
J(z_1^{a}z_{-1}^{b}z_0^{c}z_{-1}^{d})
=
\TIKZ{%
  \node[dot] (a) at (0,0) {}; \node[below=1pt] at (a) {$a$};
  \node[dot] (b) at (-\TreeXSpread,\TreeYStep) {}; \node[left=\TreeLabelSep] at (b) {$b$};
  \node[dot] (c) at ( \TreeXSpread,\TreeYStep) {}; \node[right=\TreeLabelSep] at (c) {$c$};
  \node[dot] (d) at ( \TreeXSpread,2*\TreeYStep) {}; \node[left=\TreeLabelSep] at (d) {$d$};
  \draw (a)--(b) (a)--(c) (c)--(d);
},
&
J(z_2^{a}z_{-1}^{b}z_{-1}^{c}z_{-1}^{d})
=
\TIKZ{%
  \node[dot] (a) at (0,0) {}; \node[below=1pt] at (a) {$a$};
  \node[dot] (b) at (-\TreeXSpread,2*\TreeYStep) {}; \node[left=\TreeLabelSep] at (b) {$b$};
  \node[dot] (c) at (0,2*\TreeYStep) {}; \node[above=\TreeLabelSep] at (c) {$c$};
  \node[dot] (d) at ( \TreeXSpread,2*\TreeYStep) {}; \node[right=\TreeLabelSep] at (d) {$d$};
  \draw (a)--(b) (a)--(c) (a)--(d);
}
\end{array}
\,.
\]
\mlabel{eq:Jinverse}
\end{remark}

By the universal property of the free associative algebra $\bfk\OF^A$, the map $\Phi_{\mathrm{fer}}^{\mathcal T}$ in \eqref{eq:treelev}
extends uniquely to an algebra homomorphism
\begin{equation}
\Phi_{\mathrm{fer}}^{\mathcal F}:\bfk\OF^A\longrightarrow \bnca_{\leq 0},
\quad
t_1\cdots t_m \mapsto \Phi_{\mathrm{fer}}^{\mathcal T}(t_1)\cdots \Phi_{\mathrm{fer}}^{\mathcal T}(t_m),
\mlabel{eq:ferF}
\end{equation}
whose restriction to ${\rm Lie}(\bfk \OT^A)$ is a Lie algebra isomorphism
\begin{equation}
\Phi_{\mathrm{fer}}:=\Phi_{\mathrm{fer}}^{\mathcal F}\big|_{{\rm Lie}(\OT^A)}: {\rm PSL}(A) = {\rm Lie}(\bfk \OT^A)\bij  \mathfrak g(A) = {\rm Lie}(V(A)),
\mlabel{Liehom}
\end{equation}
We call the $\Phi_{\mathrm{fer}}$ the {\bf planar fertility map}.

\begin{prop}
For $s,t\in \OT^A$, one has
\begin{equation*}
\Phi_{\mathrm{fer}}^{\mathcal T}(s\curvearrowright_l t)
=
\Phi_{\mathrm{fer}}^{\mathcal T}(s)\btri \Phi_{\mathrm{fer}}^{\mathcal T}(t).
\end{equation*}
\mlabel{treelevel}
\end{prop}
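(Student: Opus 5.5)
The plan is to compare the two sides term by term. Both are indexed by the vertices of $t$.

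Write $t$ with vertices $v_1<\cdots<v_q$ in root-subtree order, so that $\Phi_{\mathrm{fer}}^{\mathcal T}(t)=y_1\cdots y_q$ with $y_j=z^{d(v_j)}_{f(v_j)-1}$. Since $\Phi_{\mathrm{fer}}^{\mathcal T}(s)\in V(A)$ has weight $-1$, formula \eqref{eq:winser} gives
\[
\Phi_{\mathrm{fer}}^{\mathcal T}(s)\btri \Phi_{\mathrm{fer}}^{\mathcal T}(t)=\sum_{j=1}^{q} y_1\cdots y_{j-1}\,\partial(y_j)\,\Phi_{\mathrm{fer}}^{\mathcal T}(s)\,y_{j+1}\cdots y_q .
\]
On the other side, by \eqref{eq:lgraft},
\[
s\curvearrowright_l t=\sum_{j=1}^q s\curvearrowright_{v_j} t,
\]
where $s$ is grafted as the leftmost child of $v_j$. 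By linearity it therefore suffices to prove, for each fixed $j$,
\[
\Phi_{\mathrm{fer}}^{\mathcal T}(s\curvearrowright_{v_j} t)=y_1\cdots y_{j-1}\,\partial(y_j)\,\Phi_{\mathrm{fer}}^{\mathcal T}(s)\,y_{j+1}\cdots y_q .
\]

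For this identity there are two ingredients.

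\textbf{(i) Fertilities and decorations.} In $s\curvearrowright_{v_j} t$ the only fertility that changes is that of $v_j$, which increases by one. Every vertex of $s$ keeps its fertility and decoration. So the letter attached to $v_j$ becomes $z^{d(v_j)}_{f(v_j)}=\partial(y_j)$, and all other letters are unchanged.

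\textbf{(ii) The new root-subtree order.} I claim that in $s\curvearrowright_{v_j} t$ the vertex order is $v_1,\dots,v_j$, then the vertices of $s$ in their own root-subtree order, then $v_{j+1},\dots,v_q$. I would prove this by induction on the depth of $t$, following the recursive definition of the order in Section~\ref{sec:order}. Write $t=B^+_a(t_1,\dots,t_m)$.
\begin{itemize}
\item If $v_j$ is the root, then $s\curvearrowright_{v_j}t=B^+_a(s,t_1,\dots,t_m)$. The order is root, then $s$, then $t_1,\dots,t_m$, which is exactly the claim.
\item Otherwise $v_j$ lies in some subtree $t_k$. Grafting only modifies $t_k$, and the induction hypothesis applied to $t_k$ shows that the vertices of $s$ are inserted immediately after $v_j$ within the block of $t_k$. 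The blocks of $t_1,\dots,t_{k-1}$ and $t_{k+1},\dots,t_m$ keep their positions.
\end{itemize}
Combining (i) and (ii), reading fertilities in this order gives exactly the $j$-th summand displayed above.

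An alternative route to (ii) goes through the inverse map $\Psi$ of Proposition~\ref{pro:Phibi}. Reading the word $y_1\cdots y_{j-1}\partial(y_j)\,\Phi_{\mathrm{fer}}^{\mathcal T}(s)\,y_{j+1}\cdots y_q$, the extra free edge created at $v_j$ is the leftmost available one. The letters of $\Phi_{\mathrm{fer}}^{\mathcal T}(s)$, whose proper prefixes are nonnegative and whose total weight is $-1$, build $s$ entirely on that slot and consume exactly that one edge. The remaining letters then fill the original slots as before. This is essentially the prefix computation already done in Lemma~\ref{lem:PN-right-generator}.

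I expect the main obstacle to be making step (ii) rigorous, in particular checking that "leftmost child" in the grafting convention matches "immediately after $v_j$" in the word order. The root-first, left-to-right recursion makes this hold: the leftmost child subtree of $v_j$ is visited right after $v_j$. I would handle it with the induction on depth described above.
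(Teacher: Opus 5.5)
Your proposal is correct and follows the paper's proof: you split $s\curvearrowright_l t$ into the single graftings $s\curvearrowright_{v_j}t$, note that only the fertility of $v_j$ changes (turning $y_j$ into $\partial(y_j)$), and use the leftmost-child convention to place the word of $s$ immediately after that letter. The one addition is your induction on the depth of $t$ for the new root-subtree order, which makes explicit a step the paper only asserts.
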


\begin{proof}
Let $V(t)=\{v_1<\cdots <v_q\}$ be the vertex set of $t$, ordered  by  the root-subtree order in Section~\ref{sec:order}. Write
\begin{equation}
\Phi_{\mathrm{fer}}^{\mathcal T}(s)=:\bx,
\quad
\Phi_{\mathrm{fer}}^{\mathcal T}(t)=:\by=y_{1}\cdots y_{q}.
\mlabel{eq:phiphi}
\end{equation}
Since $s\in\OT^A$, one has
\begin{equation}
\wt(\bx)=-1,\quad\partial^{-\wt(\bx)}=\partial.
\mlabel{eq:parfuyi}
\end{equation}
By~\eqref{eq:lgraft},
\[
s\curvearrowright_l t
=
\sum_{j=1}^q s\curvearrowright_{v_j} t.
\]
For each $j$, the grafting at $v_j$ increases the fertility of $v_j$ by one:
$$f(v_j)\longmapsto f(v_j)+1,$$
On the monomial side, this means exactly
\[
y_{j}\longmapsto \partial(y_{j}).
\]
 Moreover, since the new subtree $s$ is
inserted as the leftmost child of $v_j$, its fertility word $\bx$ appears
immediately after $\partial(y_j)$ and before the words corresponding to the
original children of $v_j$.  Therefore
\[
\Phi_{\mathrm{fer}}^{\mathcal T}(s\curvearrowright_{v_j} t)
=
y_1\cdots y_{j-1}\,\partial(y_j)\,\bx\,y_{j+1}\cdots y_q .
\]
Hence
\begin{align*}
\Phi_{\mathrm{fer}}^{\mathcal T}(s\curvearrowright_l t)
&\overset{\eqref{eq:lgraft}}{=}
\sum_{j=1}^q
\Phi_{\mathrm{fer}}^{\mathcal T}(s\curvearrowright_{v_j} t)\\
&\overset{(\ref{eq:treelev})}{=}
\sum_{j=1}^q
y_{1}\cdots y_{j-1}
\,\partial(y_{j})\,
\bx\,
y_{j+1}\cdots y_{q}\\
&\overset{\eqref{eq:parfuyi}}{=}
\sum_{j=1}^q
y_{1}\cdots y_{j-1}
\,\partial^{-\wt(\bx)}(y_{j})\,
\bx\,
y_{j+1}\cdots y_{q}\\
&\overset{(\ref{eq:winser})}{=}
\bx\btri \by\\
&\overset{\eqref{eq:phiphi}}{=}
\Phi_{\mathrm{fer}}^{\mathcal T}(s)\btri \Phi_{\mathrm{fer}}^{\mathcal T}(t).
\end{align*}
\end{proof}

\begin{prop}\mlabel{faen}
The planar fertility map $\Phi_{\mathrm{fer}}$ in~\eqref{Liehom} is a post-Lie algebra isomorphism.
\end{prop}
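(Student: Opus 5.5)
The plan is to show two things: that $\Phi_{\mathrm{fer}}$ is a bijective Lie algebra morphism, and that it intertwines the post-Lie products,
\[
\Phi_{\mathrm{fer}}(X\curvearrowright_l Y)=\Phi_{\mathrm{fer}}(X)\btri\Phi_{\mathrm{fer}}(Y),\qquad X,Y\in{\rm PSL}(A).
\]
The product identity is obtained from the tree-level identity of Proposition~\ref{treelevel} by a double induction on bracket depth. At each stage we use only the two post-Lie axioms, which hold on both sides: for $(\bnca_{\le0},\btri)$ by Lemma~\ref{lem:ncpost} and Theorem~\ref{postPN}, and for the free post-Lie algebra $({\rm PSL}(A),\curvearrowright_l,\lpp{-}{-})$ by assumption.

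\textbf{Step 1: Lie isomorphism.} The map $\Phi_{\mathrm{fer}}^{\mathcal F}$ in \eqref{eq:ferF} is an algebra morphism, so its restriction to ${\rm Lie}(\bfk\OT^A)$ is a Lie morphism. Its image is the Lie algebra generated by $\Phi_{\mathrm{fer}}^{\mathcal T}(\OT^A)=V(A)$, namely $\mathfrak g(A)$, by Proposition~\ref{pro:Phibi}. For injectivity, it suffices that $\Phi_{\mathrm{fer}}^{\mathcal F}$ is injective on $\bfk\OF^A$. The key observation is that the set of monomials spanning $V(A)$ is a prefix code: a word of $V(A)$ has all proper prefixes of weight $\ge0$ and total weight $-1$. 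Consequently, a product $w_1\cdots w_m$ of such words factors uniquely, by cutting at the first prefix of weight $-1$ and iterating. Hence distinct forests are sent to distinct words, and $\Phi_{\mathrm{fer}}^{\mathcal F}$ is injective.

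\textbf{Step 2: second argument.} Fix a tree $s$ and let $Y$ range over Lie polynomials in trees. The identity $\Phi_{\mathrm{fer}}(s\curvearrowright_l Y)=\Phi_{\mathrm{fer}}(s)\btri\Phi_{\mathrm{fer}}(Y)$ holds for $Y$ a tree by Proposition~\ref{treelevel}. Both $s\curvearrowright_l-$ and $\Phi_{\mathrm{fer}}(s)\btri-$ are derivations of the respective brackets, by the first post-Lie axiom. Since $\Phi_{\mathrm{fer}}$ preserves brackets, induction on the bracket depth of $Y$ gives the identity for all $Y\in{\rm PSL}(A)$.

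\textbf{Step 3: first argument.} Induct on the bracket depth of $X$, writing $X=\lpp{s}{X'}$ with $s$ a tree and $X'$ of smaller depth. This is exactly the scheme used to prove closure of $\btri$ on $\mathfrak g(A)$. The second post-Lie axiom gives
\[
\lpp{s}{X'}\curvearrowright_l Z=s\curvearrowright_l(X'\curvearrowright_l Z)-(s\curvearrowright_l X')\curvearrowright_l Z-X'\curvearrowright_l(s\curvearrowright_l Z)+(X'\curvearrowright_l s)\curvearrowright_l Z,
\]
together with the same expansion for $\btri$. We apply $\Phi_{\mathrm{fer}}$ termwise:
\begin{enumerate}
\item Terms whose outer left factor is the tree $s$ are handled by Step~2.
\item Terms whose outer left factor is $X'$ are handled by the induction hypothesis.
\item The term $(X'\curvearrowright_l s)\curvearrowright_l Z$ is the delicate one, because $X'\curvearrowright_l s$ has no controlled bracket depth a priori.
\end{enumerate}

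\textbf{Main obstacle: the last term.} First, $X'\curvearrowright_l s$ is a linear combination of trees, since left grafting of a Lie element into a tree stays in $\bfk\OT^A$ (the tree analogue of Lemma~\ref{lem:PN-right-generator}). Second, by the induction hypothesis applied to $X'$ with second argument the tree $s$, its image is $\Phi_{\mathrm{fer}}(X')\btri\Phi_{\mathrm{fer}}(s)\in V(A)$. Step~2 then applies to each tree in $X'\curvearrowright_l s$, which closes the induction.

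Bijectivity, from Step~1, combined with this product compatibility gives the claimed post-Lie isomorphism.
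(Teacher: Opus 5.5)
Your proof is correct, but it takes a different route from the paper's.

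\textbf{The paper's route.} The paper invokes the universal property of the free post-Lie algebra $(\mathrm{PSL}(A),\curvearrowright_l,\lpp{-}{-})$ to get a post-Lie morphism $\overline{\Phi}$ with $\overline{\Phi}(\bullet_a)=z^a_{-1}$. It then shows $\overline{\Phi}=\Phi^{\mathcal T}_{\mathrm{fer}}$ on trees via Proposition~\ref{treelevel}. Since both maps are Lie morphisms agreeing on the generating trees, it concludes $\overline{\Phi}=\Phi_{\mathrm{fer}}$. Bijectivity is simply taken from \eqref{Liehom}.

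\textbf{Your route.} You verify the morphism property of $\Phi_{\mathrm{fer}}$ directly. You use the derivation property in the second slot, then the four-term associator expansion in the first slot. This needs only the post-Lie axioms on both sides, never freeness of $\mathrm{PSL}(A)$. Your prefix-code argument also proves the injectivity that \eqref{Liehom} only asserts.

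\textbf{What each buys.} The paper's argument is shorter. Once $\overline{\Phi}=\Phi_{\mathrm{fer}}$ is known, compatibility with $\curvearrowright_l$ on all of $\mathrm{PSL}(A)$ comes for free. However, it leaves implicit why agreement on one-vertex trees forces agreement on all trees. That needs a triangularity induction, e.g.\ on the number of vertices and then on root fertility, via
\[
B^+_a(t_1\cdots t_k)=t_1\curvearrowright_l B^+_a(t_2\cdots t_k)-(\text{trees with the same vertex count and root fertility } k-1).
\]
Your argument makes every step explicit, at the cost of the bracket-depth bookkeeping in Step~3.

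\textbf{A small repair in Step~3.} Your case list omits the term $(s\curvearrowright_l X')\curvearrowright_l Z$, whose outer left factor is neither $s$ nor $X'$. It is still covered by the induction hypothesis. The map $s\curvearrowright_l-$ is a derivation sending $\bfk\OT^A$ into itself, so $s\curvearrowright_l X'$ has the same number of tree factors as $X'$. Step~2 then gives $\Phi_{\mathrm{fer}}(s\curvearrowright_l X')=\Phi_{\mathrm{fer}}(s)\btri\Phi_{\mathrm{fer}}(X')$.

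For this to work cleanly, induct strongly on the number of tree factors, i.e.\ the tensor-length grading of $\mathrm{Lie}(\bfk\OT^A)\subset T(\bfk\OT^A)$. That is a genuine grading preserved by $s\curvearrowright_l-$, unlike an unspecified ``bracket depth''.

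Your claim that $X'\curvearrowright_l s\in\bfk\OT^A$ needs no separate tree-side proof. It follows from Step~1 and Lemma~\ref{lem:PN-right-generator}: the induction hypothesis gives $\Phi_{\mathrm{fer}}(X'\curvearrowright_l s)\in V(A)$, and $\Phi_{\mathrm{fer}}$ is injective with $\Phi_{\mathrm{fer}}(\bfk\OT^A)=V(A)$.
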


\begin{proof}
Since
$
\bigl({\rm PSL}(A),\curvearrowright_l,\lpp{-}{-}\bigr)
$
is the free post-Lie algebra on the set $A$, and
$
\bigl(\mathfrak g(A),\btri,\lpp{-}{-}\bigr)
$
is a post-Lie algebra by Theorem~\mref{postPN}, there exists a unique post-Lie algebra homomorphism
\begin{equation}
\lbar{\Phi}:{\rm PSL}(A)\longrightarrow \mathfrak g(A)
\mlabel{eq:tilphi}
\end{equation}
such that
\begin{equation}
\lbar{\Phi}(\bullet_a)=z_{-1}^a=:\Phi_{\mathrm{fer}}^{\mathcal T}(\bullet_a),
\quad \forall\, a\in A.
\mlabel{eq:vertex}
\end{equation}
We are left to prove that $\lbar{\Phi}= \Phi_{\mathrm{fer}}$. Notice that the free post-Lie algebra $\mathrm{PSL}(A)$ is generated by the
one-vertex trees, so every element of $\mathrm{PSL}(A)$ is a linear
combination of iterated expressions built from them by the Lie bracket and
the post-Lie product $\curvearrowright_l$.
So it follows from
\[
\lbar{\Phi}(s\curvearrowright_l t)
=
\lbar{\Phi}(s)\btri \lbar{\Phi}(t),
\quad \forall\, s,t\in\OT^A,
\]
Proposition~\mref{treelevel} and \eqref{eq:vertex} that
\begin{equation*}
\lbar{\Phi}(t)=\Phi_{\mathrm{fer}}^{\mathcal T}(t),
\quad \forall\, t\in\OT^A.
\mlabel{eq:plantree}
\end{equation*}
Finally, in terms of~\meqref{Liehom} and the fact that $\lbar{\Phi}$
in \eqref{eq:tilphi} is also a Lie algebra homomorphism, we have $\lbar{\Phi}=\Phi_{\mathrm{fer}}:{\rm PSL}(A) \longrightarrow \mathfrak g(A)$ as Lie algebra homomorphisms.
\end{proof}

Once the planar tree fertility map in~(\ref{eq:treelev}) identifies monomials with planar rooted trees,
structural identities on the MKW side can be transported to the PLOT side.
The mock-cocycle condition is the first such transported identity.

\subsection{Mock-cocycle condition of $\Delta_{\rm PLOT}$}
The multiplicative extension
\[
\Phi_{\mathrm{fer}}: U\big({\rm PSL}(A)\big)\longrightarrow U\big(\mathfrak g(A)\big)
\]
of the planar fertility map $\Phi_{\mathrm{fer}}$ in~\meqref{Liehom} to universal enveloping algebra is  a Hopf morphism
\[\Phi_{\mathrm{fer}}: \Big(U\big(\rm PSL(A)\big),\star,\Delta_{\shuffle},\etree, \varepsilon\Big)\longrightarrow \Big(U\big(\mathfrak g(A)\big),\star,\Delta_{\shuffle},\etree, \varepsilon\Big),\]
where both ordered Grossman-Larson products are denoted by $\star$.
Taking linear graded duality, $\Phi_{\mathrm{fer}}$ induces a Hopf algebra isomorphism
\begin{equation}
J:\mathcal H_{\mathrm{PLOT}}^{A} \bij  \mathcal H_{\mathrm{MKW}}^{A}.
\mlabel{J:hopfm}
\end{equation}

\noindent Set
\begin{equation}
{\mathcal W}(A):=
\{\bw_1\cdots \bw_r\mid r\ge 0,\ 
\bw_i\in V(A)\text{ for }1\le i\le r\}.
\mlabel{eq:zzzz}
\end{equation}
 For any $a\in A$, we define  the planar multi-index version of the grafting operator
\[
L_a^+:\mathcal W(A)\longrightarrow \mathfrak g(A),
\quad
\bw_{1}\cdots \bw_{r}\longmapsto z_{r-1}^{a}\bw_{1}\cdots \bw_{r}.
\]
Then
\begin{equation*}
\Phi_{\mathrm{fer}}^{\mathcal T}\circ B_a^{+}
=
\la\circ \Phi_{\mathrm{fer}}^{\mathcal F}.
\end{equation*}
Its transpose
$
\lam:\mathcal H_{\rm PLOT}^{A}\longrightarrow \mathcal H_{\rm PLOT}^{A}
$
satisfies that
\begin{equation}
J\circ \lam
=
B_a^{-}\circ J.
\mlabel{eq:JBa}
\end{equation}
For every  monomial $\bzia \in V(A)$, using the Hopf algebra morphism property for $J$:
\begin{align}
(J\otimes J) \circ \Delta_{\rm PLOT}(\bzia)
&=\Delta_{\mathrm{MKW}}\circ\bigl(J(\bzia)\bigr) \notag\\
&\overset{(\ref{eq:cocyle})}{=}J(\bzia)\otimes \etree
+\sum_{a\in A}(\id\otimes B_a^{+})\,
\Delta_{\mathrm{MKW}}\,\bigl(B_a^{-}\circ(J(\bzia))\bigr)\notag\\
&\overset{\eqref{eq:JBa}}{=}J(\bzia)\otimes \etree
+\sum_{a\in A}(\id\otimes B_a^{+})\,
\Delta_{\mathrm{MKW}}\circ\bigl(J(\lam(\bzia))\bigr)\nonumber
\\
&=J(\bzia)\otimes \etree
+\sum_{a\in A}(\id\otimes B_a^{+})\circ
(J\otimes J) \circ \Delta_{\rm PLOT}\bigl(\lam(\bzia)\bigr),\nonumber
\end{align}
which is called the {\bf mock-cocycle condition} of the coproduct $\Delta_{\rm PLOT}$.
\section{The combinatorial explanation of the coproduct}\mlabel{sec:coproduct}
In this section, we first reconstruct the parent-map structure for  monomials  $\bzia\in V(A)$  on the monomial side. We then define the  left-admissible cuts for $\bzia$ and obtain a combinatorial description of the coproduct $\Delta_{\rm PLOT}^{A}$.
\nc{\cc}{\mathcal{c}} For $\bzia := x_{1}\cdots x_{p}$,
denote the number of children of $x_j$ by
\[
{\rm ch}(x_j):=\wt(x_j)+1\in\ZZ_{\ge 0}, \,\text{ where }\, j=1,\ldots,p.
\]

\begin{exam}
Consider the planar $A$-decorated rooted tree
\[
t:=\vcenter{\hbox{\tikz{
  \node[dot] (a) at (0,0) {}; \node[below=1pt] at (a) {$a$};
  \node[dot] (b) at (-\TreeXSpread,\TreeYStep) {}; \node[left=\TreeLabelSep] at (b) {$b$};
  \node[dot] (c) at ( \TreeXSpread,\TreeYStep) {}; \node[right=\TreeLabelSep] at (c) {$c$};
  \node[dot] (d) at ( \TreeXSpread,2*\TreeYStep) {}; \node[right=\TreeLabelSep] at (d) {$d$};
  \draw (a)--(b) (a)--(c) (c)--(d);
}}}
,\]
by \eqref{eq:treelev}, we have
$$
\Phi_{\mathrm{fer}}^{\mathcal T}(t)=
z_{1}^{a}z_{-1}^{b}z_{0}^{c}z_{-1}^{d}=:x_1x_2x_3x_4.
$$
Roughly speaking, $x_1$ has two children $x_2$ and $x_3$, $x_3$ has one child $x_4$, while both $x_2$ and $x_4$ have no children.  Since ${\rm ch}(x_1)=2$, this part  splits into two consecutive  blocks of total weight $-1$, namely $x_2$ and $x_3x_4.$
Therefore the two children of $x_1$ are the first letters of these two blocks:
$x_2$, \,$x_3$. Similarly, since ${\rm ch}(x_3)=1$, the word after $x_3$ inside the block generated by $x_3$ splits into one  block $(x_4)$. So the unique child of $x_3$ is $x_4$.
\mlabel{extree}
\end{exam}

\noindent Let
\[
\bzia=x_1\cdots x_p\in V(A).
\]
For each $1\le j\le p$, define
\begin{equation}
b_{\bzia}(x_j):=\min \Bigl\{ k\ge j \;\Big|\; \sum_{m=j}^{k} {\rm wt}(x_{m})=-1 \Bigr\}.
\mlabel{eq:bj}
\end{equation}
Then set
\begin{equation}
B_{\bzia}(x_j):=x_j x_{j+1}\cdots x_{b_{\bzia}(x_j)},
\quad
B^\circ_{\bzia}(x_j):=x_{j+1}\cdots x_{b_{\bzia}(x_j)},
\mlabel{eq:weightb}
\end{equation}
which are called  the {\bf descendant-generated block} and the {\bf descendant block} of $x_j$, respectively.
Notice that
\begin{equation}
B_{\bzia}(x_1) = \bzia.
\mlabel{eq:bz1}
\end{equation}

\begin{remark}
\begin{enumerate}
\item The set in the right-hand side of~\meqref{eq:bj} is not necessarily a singleton set. For example,  consider $$\bzia=
z_{2}^{a}z_{0}^{b}z_{0}^{c}z_{-1}^{d}z_{-1}^{e}z_{1}^{f}z_{-1}^{g}z_{-1}^{h}
=:x_1x_2x_3x_4x_5x_6x_7x_8.$$ For $j=4$, both $k=4$ and $k=6$ satisfy the condition
$\sum_{m=4}^{k} {\rm wt}(x_m)=-1$.

\item At the tree level, the minimality in~\meqref{eq:bj} ensures that the vertices corresponding to the descendant block $B_{\bzia}(x_j)^{\circ}$ are precisely all the descendants of the vertex corresponding to $x_j$.
\end{enumerate}
\end{remark}

We next define the  child sets and the parent map. Let $1\leq j \leq p$ be arbitrary fixed.
If ${\rm ch}(x_j)=0$, then define
$
{\rm Ch}_{\bzia}(x_j):=\emptyset.
$
If ${\rm ch}(x_j)=r\ge 1$,
inside $B_{\bzia}(x_j)$, define recursively on $1\le s\le r-1$ that
\begin{equation}
c_{j,1}:=j+1,\quad
\ell_{j,s}:= b_{\bzia}(x_{c_{j,s}}),\quad
c_{j,s+1}:=\ell_{j,s}+1.
\mlabel{eq:childstart}
\end{equation}
Roughly speaking, the $c_{j,s}$ represents the position of the $s$-th child of $x_j$, and the $\ell_{j,s}$ represents the position immediately preceding the $(s + 1)$-th child of $x_j$.
Now we define the ordered child set of $x_j$ by
\begin{equation}
{\rm Ch}_{\bzia}(x_j):=
\bigl\{x_{c_{j,1}} < \dots < x_{c_{j,r}}\bigr\}.
\mlabel{eq:childset}
\end{equation}
Then $B_{\bzia}(x_j)$ admits the decomposition
\begin{equation}
B_{\bzia}(x_j) = x_j B_{\bzia}(x_{c_{j,1}})\cdots  B_{\bzia}(x_{c_{j,r}}), \quad B^\circ_{\bzia}(x_j) =  B_{\bzia}(x_{c_{j,1}})\cdots  B_{\bzia}(x_{c_{j,r}}).
\mlabel{eq:blockdecomp}
\end{equation}
Each $B_{\bzia}(x_{c_{j,s}})$ is a consecutive subword of $B_{\bzia}(x_j)$. Further, since
\[
c_{j,s+1}=\ell_{j,s}+1 = b_{\bzia}(x_{c_{j,s}}) +1 >c_{j,s},
\]
we have
\[
c_{j,1}<c_{j,2}<\cdots<c_{j,r},
\]
and so $B_{\bzia}(x_{c_{j,1}}), \ldots,  B_{\bzia}(x_{c_{j,r}})$ are pairwise disjoint consecutive subwords of $B_{\bzia}(x_j)$.\\

The next lemma is the basic reconstruction statement: every non-root element
has one and only one parent determined by the block decomposition.

\begin{lemma}
With the above setting,
\begin{equation}
\{x_2,\dots,x_p\}=\bigsqcup_{j=1}^p {\rm Ch}_{\bzia}(x_j). \mlabel{eq:disj}
\end{equation}
\mlabel{lem:disju}
\end{lemma}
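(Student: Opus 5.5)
The plan is to prove first a structural statement about blocks, and then obtain \eqref{eq:disj} by induction on the length of a block. The key fact is the following. For every $1\le j\le p$, the block $B_{\bzia}(x_j)$ is well defined, i.e.\ the set in \eqref{eq:bj} is nonempty and $b_{\bzia}(x_j)\le p$. Moreover, the recursion \eqref{eq:childstart} runs for exactly ${\rm ch}(x_j)$ steps inside $B_{\bzia}(x_j)$ and ends precisely at $b_{\bzia}(x_j)$, so that the decomposition \eqref{eq:blockdecomp} holds. For existence, I use the partial sums $S_j(k):=\sum_{m=j}^{k}\wt(x_m)$. We have $S_j(j)=\wt(x_j)\ge -1$, the increments are $\ge -1$, and
\[
S_j(p)=\wt(\bzia)-\sum_{m<j}\wt(x_m)\le -1,
\]
because the proper prefix $x_1\cdots x_{j-1}$ has nonnegative weight when $j\ge 2$. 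Since the sums move down by steps of at most one, they must hit $-1$ somewhere in $[j,p]$.

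Next comes the decomposition. Suppose ${\rm ch}(x_j)=r\ge1$. Then $S_j(j)=r-1\ge0$, and by minimality of $b_{\bzia}(x_j)$ the partial sums $S_j(k)$ stay $\ge 0$ for $j\le k<b_{\bzia}(x_j)$. Starting at $c_{j,1}=j+1$, the partial sums measured from $c_{j,1}$ first reach $-1$ exactly when $S_j$ first drops from $r-1$ to $r-2$ (again using unit downward steps). So $\ell_{j,1}=b_{\bzia}(x_{c_{j,1}})$ is that first-drop time, and it is $<b_{\bzia}(x_j)$ as long as $r\ge 2$. Iterating, $\ell_{j,s}$ is the first time $S_j$ reaches $r-1-s$. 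Hence $\ell_{j,r}$ is the first time $S_j$ reaches $-1$, which is $b_{\bzia}(x_j)$. This shows that each $c_{j,s}$ lies inside $B_{\bzia}(x_j)$ and that the blocks of the children tile $B^\circ_{\bzia}(x_j)$ consecutively. It also follows that every block $B_{\bzia}(x_k)$ with $j<k\le b_{\bzia}(x_j)$ is contained in $B_{\bzia}(x_j)$: within the block $S_j$ stays above $-1$ before its end, so the sums starting at $k$ reach $-1$ no later than $b_{\bzia}(x_j)$.

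With this in hand, I prove by strong induction on the length $b_{\bzia}(x_j)-j+1$ that
\[
\{x_{j+1},\dots,x_{b_{\bzia}(x_j)}\}=\bigsqcup_{k=j}^{b_{\bzia}(x_j)}{\rm Ch}_{\bzia}(x_k).
\]
The base case is length one: then ${\rm ch}(x_j)=0$ and both sides are empty. For the inductive step, \eqref{eq:blockdecomp} splits the left side as the disjoint union, over $s=1,\dots,r$, of $\{x_{c_{j,s}}\}$ together with the descendant set of $x_{c_{j,s}}$. Applying the induction hypothesis to each shorter block $B_{\bzia}(x_{c_{j,s}})$ gives the union of ${\rm Ch}(x_k)$ over $k$ in that block. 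Adding ${\rm Ch}_{\bzia}(x_j)=\{x_{c_{j,s}}\}_s$ gives all of it. The unions are disjoint because the child blocks are pairwise disjoint, and each child set of a letter lies inside that letter's own block (by the containment statement above); the children of $x_j$ are exactly the block-initial letters, which are not children of any letter inside a sibling block. Finally, I take $j=1$ and use \eqref{eq:bz1}, $B_{\bzia}(x_1)=\bzia$, to obtain \eqref{eq:disj}. Throughout, positions are treated as labels, so repeated letters cause no ambiguity.

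The main obstacle is the middle step: showing that the recursively defined $\ell_{j,s}$ coincide with the successive first-passage times of $S_j$, and that the last one equals $b_{\bzia}(x_j)$. The whole argument rests on the discrete "steps down by at most one" property, $\wt\ge -1$. I would state it once as a first-passage lemma and then apply it repeatedly. Alternatively, one could pass through Proposition~\ref{pro:Phibi}, identifying blocks with subtrees of $\Psi(\bzia)$, but the direct weight-counting argument seems cleaner.
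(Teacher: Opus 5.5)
Your proof is correct, but it follows a different route from the paper's. The paper first proves the nesting property ($j<k\le b_{\bzia}(x_j)$ implies $B_{\bzia}(x_k)\subsetneq B_{\bzia}(x_j)$) and then argues globally in two separate steps. For covering, given $k\ge 2$ it picks the inclusion-minimal block $B_{\bzia}(x_{j_0})$ with $j_0<k$ and $x_k\in B^\circ_{\bzia}(x_{j_0})$, and uses \eqref{eq:blockdecomp} together with minimality to force $x_k$ to be the first letter of a child block of $x_{j_0}$. For disjointness, it shows that if $x_k\in{\rm Ch}_{\bzia}(x_j)\cap{\rm Ch}_{\bzia}(x_{j'})$ with $j<j'$, then $x_{j'}$ lies in an earlier child block of $x_j$, so $B_{\bzia}(x_{j'})$ ends before $x_k$, a contradiction. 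Both steps rely on \eqref{eq:blockdecomp}. The paper records it in the setup but only checks $c_{j,1}<\cdots<c_{j,r}$. It does not check that the child blocks stay inside $B_{\bzia}(x_j)$, nor that the last one ends exactly at $b_{\bzia}(x_j)$. Your first-passage description of $\ell_{j,s}$ as the first time $S_j$ reaches $r-1-s$ supplies precisely this. Your strong induction on block length then gives covering and disjointness together, in the stronger local form that the child sets of the letters of any block $B_{\bzia}(x_j)$ partition $B^\circ_{\bzia}(x_j)$. So your version is more self-contained and shows the recursive subtree structure directly. The paper's version, once \eqref{eq:blockdecomp} is granted, is shorter and non-inductive. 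It also gives an explicit characterization of $\mathrm{Par}_{\bzia}(x_k)$: it is the letter whose block is the smallest one containing $x_k$ in its descendant part.
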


\begin{proof}
For brevity, write
\[
{\bf z}_{\mathbf{i}}^{\mathbf{a}}:=x_1\cdots x_p,\quad
b(j):=b_{{\bf z}_{\mathbf{i}}^{\mathbf{a}}}(x_j),\quad
B_j:=B_{{\bf z}_{\mathbf{i}}^{\mathbf{a}}}(x_j),\quad
B_j^\circ:=B_{{\bf z}_{\mathbf{i}}^{\mathbf{a}}}^\circ(x_j),\quad
{\rm Ch}_j:= {\rm Ch}_{\bzia}(x_j).
\]
Thus $B_j=x_jx_{j+1}\cdots x_{b(j)}$ by (\ref{eq:weightb}). We first establish the following auxiliary fact:
\begin{equation}
j<k\le b(j)\quad\Longrightarrow\quad b(k)\le b(j)\,\text{ and so }\, B_k\subsetneq B_j.
\mlabel{eq:lem33nesting}
\end{equation}
Indeed, by the minimality in (\ref{eq:bj}), for every $n$ with $j\le n<b(j)$ one has
\begin{equation}
\sum_{m=j}^n \mathrm{wt}(x_m)\ge 0.
\mlabel{eq:lem33prefix}
\end{equation}
Otherwise, if for some $n<b(j)$ the sum is less than or equal to $-1$, then since each
$\mathrm{wt}(x_m)\in \mathbb{Z}_{\ge -1}$, the sequence of partial sums would
have to pass through the value $-1$ before reaching that value, contradicting
the definition (\ref{eq:bj}) of $b(j)$.
Now let $j<k\le b(j)$. Using (\ref{eq:lem33prefix}) with $n=k-1$, we obtain
\[
\sum_{m=k}^{b(j)} \mathrm{wt}(x_m)
=
\sum_{m=j}^{b(j)} \mathrm{wt}(x_m)-\sum_{m=j}^{k-1}\mathrm{wt}(x_m)
=
-1-\sum_{m=j}^{k-1}\mathrm{wt}(x_m)
\le -1.
\]
Again, because each increment is at least $-1$, the partial sums
$\sum_{m=k}^n \mathrm{wt}(x_m)$ must hit the value $-1$ for some $n\le b(j)$.
By (\ref{eq:bj}), this implies $b(k)\le b(j)$ and so $B_k\subsetneq B_j$, proving
(\ref{eq:lem33nesting}).

\noindent\textbf{Step 1.} We show that
\begin{equation}
\{x_2,\ldots,x_p\}=\bigcup_{j=1}^p {\rm Ch}_j.
\mlabel{eq:lem33cover}
\end{equation}

\noindent The inclusion
\[
\bigcup_{j=1}^p {\rm Ch}_j\subseteq \{x_2,\ldots,x_p\}
\]
is immediate from (\ref{eq:childset}), since every child index is of the form
$p\geq c_{j,s}\ge j+1\ge 2$.
Conversely, fix $k\in\{2,\ldots,p\}$. Since $B_1=x_1\cdots x_p$ by
(\ref{eq:bz1}), we have $x_k\in B_1^\circ$. Hence the family
\[
\mathcal{F}_k:=\{\, B_j \mid 1\le j<k,\ x_k\in B_j^\circ \,\}
\]
is nonempty. Choose $B_{j_0}\in \mathcal{F}_k$ minimal with respect to
inclusion. Let $r:={\rm ch}(x_{j_0})\geq1$. By (\ref{eq:blockdecomp}),
\[
x_k\in B^\circ_{j_0}= B_{c_{j_0,1}}\cdots B_{c_{j_0,r}},
\]
where the blocks $B_{c_{j_0,1}},\ldots,B_{c_{j_0,r}}$ are pairwise disjoint
consecutive subwords of $B_{j_0}$.
Therefore $x_k$ belongs to exactly one of these blocks, say
$x_k\in B_{c_{j_0,s}}$ for some $s$. We claim that $k=c_{j_0,s}$. If not,
then $k>c_{j_0,s}$, so $x_k\in B_{c_{j_0,s}}^\circ$. Hence
$B_{c_{j_0,s}}\in \mathcal{F}_k$. Moreover, (\ref{eq:lem33nesting}) gives
\[
B_{c_{j_0,s}}\subsetneq B_{j_0},
\]
contradicting the minimality of $B_{j_0}$. Thus $k=c_{j_0,s}$, and therefore
$x_k\in {\rm Ch}_{j_0}$ by (\ref{eq:childset}).

\noindent\textbf{Step 2.} We prove that the union in (\ref{eq:lem33cover}) is disjoint. It is enough to show that
\begin{equation}
{\rm Ch}_j\cap {\rm Ch}_{j'}=\varnothing
\quad\text{for all } j\ne j'.
\mlabel{eq:lem33pairwise}
\end{equation}
Assume, for contradiction, that there exists
$x_k\in {\rm Ch}_j\cap {\rm Ch}_{j'}$ with $j\ne j'$. Without loss of generality, suppose
$j<j'$. Since $x_k\in {\rm Ch}_j$, there exists $s$ such that $k=c_{j,s}$. Hence,
by (\ref{eq:blockdecomp}),
\[
B_j
=
x_jB_{c_{j,1}}\cdots B_{c_{j,s-1}}B_kB_{c_{j,s+1}}\cdots B_{c_{j,r}}.
\]
Because the child blocks are consecutive and pairwise disjoint, every letter
strictly between $x_j$ and the first letter $x_k$ of the block $B_k$ must lie
in one of the earlier blocks
$B_{c_{j,1}},\ldots,B_{c_{j,s-1}}$. Since $j<j'<k$, there exists some $1\leq u<s$
such that
\begin{equation}
x_{j'}\in B_{c_{j,u}}.
\mlabel{eq:zjpj}
\end{equation}

If $j'=c_{j,u}$, then $B_{j'}=B_{c_{j,u}}$, which lies entirely to the left of
$B_k$ because $u<s$. If $j'>c_{j,u}$, then $b(c_{j,u})\geq j' > c_{j,u}$ by \meqref{eq:zjpj}, and so (\ref{eq:lem33nesting}) gives that
\[
B_{j'}\subsetneq B_{c_{j,u}}.
\]
Thus again $B_{j'}$ lies entirely to the left of $B_k$. In both cases, the last
letter of $B_{j'}$ occurs before $x_k$, and therefore $x_k\notin B_{j'}$.
But this is impossible, because $x_k\in {\rm Ch}_{j'}$ means that $x_k$ is the first
letter of one of the descendant-generated blocks appearing in the decomposition
(\ref{eq:blockdecomp}) of $B_{j'}$, hence certainly $x_k\in B_{j'}$. This
contradiction proves (\ref{eq:lem33pairwise}). Combining Step $1$ and Step $2$, we obtain (\ref{eq:disj}).
\end{proof}

\noindent Thanks to~\meqref{eq:childset} and~\meqref{eq:disj}, we now can define the {\bf parent map}
\begin{equation*}
\Par_{\bzia}:\{x_2<\cdots<x_p\}\longrightarrow\{x_1,\ldots,x_{p-1}\}, \quad x_{c_{j,s}} \mapsto x_j.
\end{equation*}

\begin{exam}
Consider
\[
\bzia=
z_{1}^{a}\, z_{-1}^{b}\, z_{0}^{c}\, z_{-1}^{d}
=:x_1x_2x_3x_4.
\]
in Example~\ref{extree}, which  can obtained by $\Phi_{\rm fer}^{\mathcal T}$ acting on  the planar rooted tree:
\[
\tikz{
  \node[dot] (a) at (0,0) {}; \node[below=1pt] at (a) {$a$};
  \node[dot] (b) at (-\TreeXSpread,\TreeYStep) {}; \node[left=\TreeLabelSep] at (b) {$b$};
  \node[dot] (c) at ( \TreeXSpread,\TreeYStep) {}; \node[right=\TreeLabelSep] at (c) {$c$};
  \node[dot] (d) at ( \TreeXSpread,2*\TreeYStep) {}; \node[right=\TreeLabelSep] at (d) {$d$};
  \draw (a)--(b) (a)--(c) (c)--(d);
}.
\]
We have
\[
({\rm ch}(x_1),{\rm ch}(x_2),{\rm ch}(x_3),{\rm ch}(x_4))=(2,0,1,0).
\]
By~\eqref{eq:bj},
\[
b_{\bzia}(x_1)=4,\quad b_{\bzia}(x_2)=2,\quad b_{\bzia}(x_3)=4,\quad b_{\bzia}(x_4)=4.
\]
Hence
\[
B_{\bzia}(x_1)=x_1x_2x_3x_4,\quad
B_{\bzia}(x_2)=x_2,\quad
B_{\bzia}(x_3)=x_3x_4,\quad
B_{\bzia}(x_4)=x_4.
\]

\noindent We now determine the child sets.
Since ${\rm ch}(x_1)=2$, by~\eqref{eq:childstart},
\[
c_{1,1}=2, \quad\ell_{1,1}=b_{\bzia}(x_2)=2,\quad
c_{1,2}=3, \quad\ell_{1,2}=b_{\bzia}(x_3)=4.
\]
Thus
\[
B_{\bzia}(x_1)=x_1\,B_{\bzia}(x_2)\,B_{\bzia}(x_3),\quad
{\rm Ch}_{\bzia}(x_1)=\{ x_2 < x_3\}.
\]
Since ${\rm ch}(x_3)=1$,
$c_{3,1}=4$ and $\ell_{3,1}=b_{\bzia}(x_4)=4$,
we have
\[
B_{\bzia}(x_3)=x_3\,B_{\bzia}(x_4),\quad
B_{\bzia}(x_4)=x_4,\quad
{\rm Ch}_{\bzia}(x_3)=\{ x_4\}.
\]
Therefore
\[
\{x_2< x_3< x_4\}
=
{\rm Ch}_{\bzia}(x_1)\sqcup{\rm Ch}_{\bzia}(x_3),
\]
and
$$
\Par_{\bzia}(x_2)=x_1,\quad
\Par_{\bzia}(x_3)=x_1,\quad
\Par_{\bzia}(x_4)=x_3.
$$
\mlabel{ex:pardetailed}
\end{exam}

\begin{defn}
Let $\bzia=x_1\cdots x_p$ be equipped with its  parent map ${\rm Par}_{\bzia}$ and its child sets ${\rm Ch}_{\bzia}(x_j)$.
A  subset  $\xhc \subseteq \{x_2,\dots,x_p\}$ is called a \textbf{(non-total) left-admissible cut} of $\bzia$
if it satisfies the following two conditions:
\begin{enumerate}
\item[(1)] (\textbf{Leftmost-children rule})
For every $j\in\{1,\dots,p\}$, the cut elements among the children of $x_j$ must form an initial segment of the ordered child set ${\rm Ch}_{\bzia}(x_j)$.
More precisely, if
\[
{\rm Ch}_{\bzia}(x_j) =\bigl(x_{c_{j,1}},\dots,x_{c_{j,r}}\bigr),
\]
then there exists an integer $m_j\in\{0,1,\dots,r\}$ such that
\[
 \mathfrak{c} \cap {\rm Ch}_{\bzia}(x_j)=\{x_{c_{j,1}},\dots,x_{c_{j,m_j}}\}.
\]

\item[(2)] (\textbf{No two elements in the same descendant-generated block})
For any two distinct elements $x_{j},x_{k} \in \xhc $, one has
\[
x_{j} \notin B^\circ_{\bzia}(x_{k}),
\quad
x_{k}\notin B^\circ_{\bzia}(x_{j}).
\]
\end{enumerate}
\mlabel{def:LAdmPSTS}
\end{defn}

Note that the empty cut  is left-admissible. By convention, we also add the total cut $\{x_1,\dots,x_p\}$ as a left-admissible cut.  
We denote by $\mathrm{LAdm}(\bzia)$ the set of left-admissible cuts 
$\xhc  $. In the following construction of $P^{\xhc}(\bzia)$ and $R^{\xhc}(\bzia)$, we
assume that $\xhc$ is neither the empty cut nor the total cut. Let the left-admissible cut
$\xhc=\{x_{j_1}<\cdots<x_{j_r}\}\in \mathrm{LAdm}(\bzia),$
be neither the empty cut nor the total cut. 
For each cut vertex
$x_{j_\ell}\in\xhc$, define the {\bf pruned block} at $x_{j_\ell}$ by
\[
B_{\bzia}^{\xhc}(x_{j_\ell})
:=
x_{j_\ell}\,x_{j_\ell+1}\cdots x_{b_{\bzia}(x_{j_\ell})}.
\]  
Since $\xhc$ is left-admissible, the
blocks
$B_{\bzia}^{\xhc}(x_{j_1}),\ldots,B_{\bzia}^{\xhc}(x_{j_r})$
are pairwise disjoint.
We set
\[
{\mathcal P}(\xhc):=
\bigl\{B_{\bzia}^{\xhc}(x_{j_\ell})\mid 1\le \ell\le r\bigr\}.
\]
The pruned part $P^\xhc(\bzia)$ is constructed from ${\mathcal P}(\xhc)$ as follows.
These blocks are first ordered by increasing indices
$j_1<\cdots<j_r.$
Among them, the blocks whose corresponding cut vertices have the same parent are concatenated in this order, while the factors corresponding to different parents are combined by $\shuffle$.
The resulting element is denoted by $P^\xhc(\bzia).$\\

Let ${\mathcal R}(\xhc)$ be the ordered subword obtained from
$x_1\cdots x_p$ by deleting all word elements contained in the pruned blocks
\[
B_{\bzia}^{\xhc}(x_{j_\ell}),
\quad 1\le \ell\le r.
\]
Write
\[
{\mathcal R}(\xhc)=\{x_{r_1}<\cdots<x_{r_s}\},
\]
where the order is inherited from the original word $x_1\cdots x_p$.
For each ${x_j}\in {\mathcal R}(\xhc )$, we define the {\bf modified weight map}
\[
 {\rm wt}_{\xhc}: {\mathcal R}(\xhc ) \longrightarrow  \mathbb Z_{\ge -1}, \quad  x_{j} \mapsto {\rm wt(x_{j})}-|\xhc \cap {\rm Ch}_{\bzia}(x_j)|,
\]
and the product
\[
R^{\xhc}(\bzia)
:=\prod_{{x_j}\in {\mathcal R}(\xhc )}
z_{{\rm wt}_{\xhc}(x_{j})}^{d(x_{j})}.
\]

The two exceptional cuts are understood by convention.  If $\xhc$ is the
empty cut, then no block is pruned, and we set
\[
P^\xhc(\bzia):=\etree,
\quad
R^\xhc(\bzia):=\bzia.
\]
If $\xhc=\{x_1,\ldots,x_p\}$ is the total cut, then the whole word is
pruned, and we set
\[
P^\xhc(\bzia):=\bzia,
\quad
R^\xhc(\bzia):=\etree.
\]

\begin{exam}\label{ex:LAdm-4letters}
Consider the monomial in Example~\ref{ex:pardetailed}
\[
\bzia=
z_{1}^{a}\, z_{-1}^{b}\, z_{0}^{c}\, z_{-1}^{d}
=:x_1x_2x_3x_4.
\]
The parent map is
\[
\Par_{\bzia}(x_1)=0,\quad
\Par_{\bzia}(x_2)=x_1,\quad
\Par_{\bzia}(x_3)=x_1,\quad
\Par_{\bzia}(x_4)=x_3.
\]
Then
\begin{center}
\renewcommand{\arraystretch}{1.18}
\setlength{\tabcolsep}{8pt}
\resizebox{0.98\linewidth}{!}{
\begin{tabular}{|>{\centering\arraybackslash}m{0.15\linewidth}
                |>{\centering\arraybackslash}m{0.15\linewidth}
                |>{\centering\arraybackslash}m{0.24\linewidth}
                |>{\centering\arraybackslash}m{0.22\linewidth}
                |>{\centering\arraybackslash}m{0.13\linewidth}|}
\hline
\textbf{Chosen set $\xhc$} &
\textbf{$\mathcal P(\xhc)$} &
\textbf{$P^\xhc(\bzia)$} &
\textbf{$R^\xhc(\bzia)$} &
\textbf{Left-admissible?}\\
\hline

$\emptyset$ &
$\emptyset$ &
$\etree$ &
$z_{1}^{a}z_{-1}^{b}z_{0}^{c}z_{-1}^{d}$ &
Yes\\
\hline

$\{x_2\}$ &
$\{x_2\}$ &
$z_{-1}^{b}$ &
$z_{0}^{a}z_{0}^{c}z_{-1}^{d}$ &
Yes\\
\hline

$\{x_3\}$ &
$\{x_3,x_4\}$ &
\NAcell &
\NAcell &
No\\
\hline

$\{x_4\}$ &
$\{x_4\}$ &
$z_{-1}^{d}$ &
$z_{1}^{a}z_{-1}^{b}z_{-1}^{c}$ &
Yes\\
\hline

$\{x_2,x_3\}$ &
$\{x_2,x_3,x_4\}$ &
$z_{-1}^{b}(z_{0}^{c}z_{-1}^{d})$ &
$z_{-1}^{a}$ &
Yes\\
\hline

$\{x_2,x_4\}$ &
$\{x_2,x_4\}$ &
$z_{-1}^{b}\shuffle z_{-1}^{d}$ &
$z_{0}^{a}z_{-1}^{c}$ &
Yes\\
\hline

$\{x_3,x_4\}$ &
$\{x_3,x_4\}$ &
\NAcell &
\NAcell &
No\\
\hline

$\{x_2,x_3,x_4\}$ &
$\{x_2,x_3,x_4\}$ &
\NAcell &
\NAcell &
No\\
\hline

$\{x_1,x_2,x_3,x_4\}$ &
$\{x_1,x_2,x_3,x_4\}$ &
$z_{1}^{a}z_{-1}^{b}z_{0}^{c}z_{-1}^{d}$ &
$\etree$ &
Yes\\
\hline
\end{tabular}}
\end{center}
\end{exam}

\begin{lemma}
Let $\bzia\in V(A)$ and
$
t:=J(\bzia)\in \OT^A.
$
Then the map
\[
\mathfrak{C}:\mathrm{LAdm}(t) \longrightarrow \mathrm{LAdm}(\bzia),\quad c \mapsto  \xhc :=\{x_{j} \mid v_j \text{ is an outcoming vertex of a cut edge in $c$} \}
\]
is a bijection. Moreover, for every $c\in \mathrm{LAdm}(t)$,  one has
\[
\Phi_{\mathrm{fer}}^{\mathcal F}\bigl(P^c(t)\bigr)=P^{\xhc}
(\bzia),
\quad
\Phi_{\mathrm{fer}}^{\mathcal T}\bigl(R^c(t)\bigr)=R^{\xhc}
(\bzia).
\]
\mlabel{lem:cut}
\end{lemma}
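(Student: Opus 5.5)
The plan is to reduce everything to one dictionary between the tree $t=J(\bzia)=\Psi(\bzia)$ and the word $\bzia=x_1\cdots x_p$. The dictionary is built from the root-subtree order $v_1<\cdots<v_p$, under which $v_j$ corresponds to $x_j$ (Proposition~\ref{pro:Phibi}).

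\textbf{Step 1: the dictionary.} I first prove, by induction on $\mathrm{dep}(t)$, that the following hold.
\begin{itemize}
\item[(i)] The set of vertices of the subtree $t_{v_j}$ rooted at $v_j$ is the consecutive interval $\{v_j,\dots,v_{b_{\bzia}(x_j)}\}$. Hence $\Phi_{\mathrm{fer}}^{\mathcal T}(t_{v_j})=B_{\bzia}(x_j)$.
\item[(ii)] The ordered children of $v_j$ in $t$ are $v_{c_{j,1}},\dots,v_{c_{j,r}}$. In particular $\Par_{\bzia}(x_k)=x_j$ exactly when $v_j$ is the parent of $v_k$.
\end{itemize}
For (i), note that $\Phi_{\mathrm{fer}}^{\mathcal T}(t_{v_j})$ is a consecutive subword of $\bzia$ starting at $x_j$, because the root-subtree order lists each subtree contiguously. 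This subword lies in $V(A)$, so its proper prefixes have weight $\ge 0$ and its total weight is $-1$. Therefore its end is exactly the first index $k\ge j$ with partial sum $-1$, i.e. $b_{\bzia}(x_j)$ in \eqref{eq:bj}. Statement (ii) then follows from the decomposition \eqref{eq:blockdecomp} together with $t_{v_j}=B_a^+(t_{v_{c_{j,1}}},\dots,t_{v_{c_{j,r}}})$ and (i), with induction on the child index $s$.

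\textbf{Step 2: the bijection $\mathfrak C$.} A left-admissible cut $c$ of $t$ is a set of edges. Each edge is determined by its outgoing (upper) vertex, which gives an injective encoding $c\mapsto\xhc$. Its image should be exactly the sets satisfying Definition~\ref{def:LAdmPSTS}, with the empty and total cuts matched by convention. Under (ii), the MKW requirement that the cut edges at each vertex be leftmost translates into the leftmost-children rule (1). Under (i), the requirement that no path from the root meets two cut edges translates into rule (2): $x_k\in B^\circ_{\bzia}(x_j)$ iff $v_k$ is a proper descendant of $v_j$. The inverse map sends $\xhc$ to the set of edges $\{\Par_{\bzia}(x_k)\to x_k : x_k\in\xhc\}$, and Lemma~\ref{lem:disju} guarantees that these edges are well defined.

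\textbf{Step 3: pruned and trunk parts.} By (i), the branch cut off at $v_{j_\ell}$ maps to the pruned block $B^{\xhc}_{\bzia}(x_{j_\ell})$. The MKW rule — concatenate the subtrees cut from the same vertex in planar order, shuffle those cut from different vertices — then matches the definition of $P^{\xhc}(\bzia)$. Here $\Phi^{\mathcal F}_{\mathrm{fer}}$ is an algebra map, and I use it on each shuffled summand. Since the blocks are pairwise disjoint intervals ordered by $j_1<\cdots<j_r$, the planar orders agree.

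For the trunk, $R^c(t)$ has vertex set $\mathcal R(\xhc)$. Its root-subtree order is the order inherited from $t$, because deleting whole subtrees preserves the left-to-right traversal. Each remaining vertex $v_j$ loses exactly $|\xhc\cap{\rm Ch}_{\bzia}(x_j)|$ children, so its letter becomes $z^{d(x_j)}_{\mathrm{wt}_{\xhc}(x_j)}$. This gives $\Phi_{\mathrm{fer}}^{\mathcal T}(R^c(t))=R^{\xhc}(\bzia)$.

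\textbf{Main obstacle.} The delicate point is Step 1(i): showing rigorously that the traversal interval of $t_{v_j}$ ends precisely at the \emph{minimal} index in \eqref{eq:bj}. The minimum is essential, since the remark after \eqref{eq:bj} shows that later indices can also give partial sum $-1$. I will handle this using the prefix-weight property of $V(A)$ applied to the subword $\Phi^{\mathcal T}_{\mathrm{fer}}(t_{v_j})$. The remaining steps are bookkeeping, the most fiddly part being the careful matching of the shuffle conventions in $P^c$.
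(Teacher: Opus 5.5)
Your proposal is correct and follows the same route as the paper: you transport left-admissible cuts, pruned forests and trunks through the fertility bijection $J=(\Phi^{\mathcal T}_{\mathrm{fer}})^{-1}$ and match the definitions on both sides. The paper only asserts this ``by the definitions'', whereas your Step~1 dictionary supplies the justification the paper leaves implicit (it appears there only as the remark after \eqref{eq:bj}): the subtree at $v_j$ is the preorder interval ending at the minimal index $b_{\bzia}(x_j)$, and ${\rm Ch}_{\bzia}(x_j)$ is the ordered list of children of $v_j$.
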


\begin{proof}
It follows from Proposition~\ref{pro:Phibi}, Remark~\ref{eq:Jinverse}, and the definition of  Left admissible cut on planar rooted trees and Definition~\mref{def:LAdmPSTS}. Thus
\[
c\in \mathrm{LAdm}(t)
\Longleftrightarrow
\xhc\in \mathrm{LAdm}(\bzia)
\]
and $\dhc$ is a bijection. Let $\xhc=\dhc(c)$. By  the definitions of $P^c(t)$ and $P^\xhc(\bzia)$,  the definitions of $R^c(t)$ and $R^\xhc(\bzia)$, we have
\[
\Phi_{\mathrm{fer}}^{\mathcal F}\bigl(P^c(t)\bigr)=P^{\xhc}(\bzia),
\quad
\Phi_{\mathrm{fer}}^{\mathcal T}\bigl(R^c(t)\bigr)=R^{\xhc}(\bzia).
\]
\end{proof}

\begin{theorem}
For every  $\bzia\in V(A)$, one has
\begin{equation*}
\Delta_{\rm PLOT}(\bzia)
=\sum_{\xhc \in \mathrm{LAdm}(\bzia)} P^\xhc(\bzia)\otimes R^\xhc(\bzia).
\end{equation*}
\mlabel{thm:plotcoprodladm}
\end{theorem}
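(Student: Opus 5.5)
The plan is to transport the Munthe-Kaas--Wright formula \eqref{eq:mkwe} through the Hopf algebra isomorphism $J$ of \eqref{J:hopfm}, and then rewrite each term on the word side using Lemma~\ref{lem:cut}. Fix a monomial $\bzia\in V(A)$ and set $t:=J(\bzia)\in\OT^A$. By Remark~\ref{eq:Jinverse} and Proposition~\ref{pro:Phibi}, $t$ is the unique planar tree with $\Phi_{\mathrm{fer}}^{\mathcal T}(t)=\bzia$. Since $J$ is the graded transpose of the Hopf morphism $\Phi_{\mathrm{fer}}$, its inverse on forests is given by $\Phi_{\mathrm{fer}}^{\mathcal F}$, up to the identification of dual bases. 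More precisely, $J^{-1}$ sends a planar forest $f$ to $\Phi_{\mathrm{fer}}^{\mathcal F}(f)$, and it sends a shuffle of forests to the corresponding shuffle of words in $\mathcal H_{\rm PLOT}^A$. This holds because the product of $\mathcal H_{\rm PLOT}^A$ is dual to the deshuffle coproduct $\Delta_{\shuffle}$ on $U(\mathfrak g(A))$, exactly as on the MKW side.

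I would proceed in the following steps.

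First, use that $J$ is a coalgebra isomorphism to write
\[
\Delta_{\rm PLOT}(\bzia)=(J^{-1}\otimes J^{-1})\,\Delta_{\mathrm{MKW}}(t)=\sum_{c\in\mathrm{LAdm}(t)} J^{-1}\bigl(P^c(t)\bigr)\otimes J^{-1}\bigl(R^c(t)\bigr).
\]

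Second, identify $J^{-1}(R^c(t))=\Phi_{\mathrm{fer}}^{\mathcal T}(R^c(t))$, since $R^c(t)$ is a tree. Likewise identify $J^{-1}(P^c(t))$ with $\Phi_{\mathrm{fer}}^{\mathcal F}(P^c(t))$, extended so as to respect shuffles. The subtrees cut from the same vertex form a concatenated forest and map to concatenated words. The forests cut from different vertices are shuffled on both sides.

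Third, apply Lemma~\ref{lem:cut}. It gives the bijection $\dhc:\mathrm{LAdm}(t)\to\mathrm{LAdm}(\bzia)$ together with $\Phi_{\mathrm{fer}}^{\mathcal F}(P^c(t))=P^{\dhc(c)}(\bzia)$ and $\Phi_{\mathrm{fer}}^{\mathcal T}(R^c(t))=R^{\dhc(c)}(\bzia)$. Reindexing the sum by $\xhc=\dhc(c)$ then yields the formula.

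Fourth, check the two conventional cuts separately. The empty cut of $t$ gives $\etree\otimes t\mapsto \etree\otimes\bzia$. The total cut gives $t\otimes\etree\mapsto\bzia\otimes\etree$. These match the conventions fixed for $P^\xhc$ and $R^\xhc$.

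The main obstacle is the content hidden in Lemma~\ref{lem:cut}, whose proof above is only a sketch. I would make it rigorous by showing three things.

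\begin{enumerate}
\item Under $\Psi$, the parent map $\Par_{\bzia}$ and the ordered child sets ${\rm Ch}_{\bzia}(x_j)$ coincide with the parent/child structure of $t$ in root-subtree order. This uses Lemma~\ref{lem:disju} and the block decomposition \eqref{eq:blockdecomp}, via induction on $|\bzia|$ along $t=B_a^+(t_1,\dots,t_r)$.
\item Each block $B_{\bzia}(x_j)$ is the fertility word of the subtree rooted at $v_j$. This is the minimality in \eqref{eq:bj}.
\item Removing the pruned blocks and lowering weights by $|\xhc\cap{\rm Ch}_{\bzia}(x_j)|$ reproduces the fertility word of the trunk. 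The key point is that root-subtree order restricts to root-subtree order on the trunk.
\end{enumerate}

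Given these, conditions (1)--(2) of Definition~\ref{def:LAdmPSTS} are verbatim the leftmost-children and antichain conditions defining left-admissible cuts of planar trees.
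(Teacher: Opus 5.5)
Your proposal is correct and follows the paper's route: set $t=J(\bzia)$, push the Munthe-Kaas--Wright left-admissible-cut formula through $J^{-1}\otimes J^{-1}$ (that is, $\Phi_{\mathrm{fer}}^{\mathcal F}\otimes\Phi_{\mathrm{fer}}^{\mathcal T}$), and reindex using the bijection and the term-by-term identities of Lemma~\ref{lem:cut}. Two of your additions go beyond the paper's proof, which simply cites that lemma: you note that the shuffles in $P^{\xhc}(\bzia)$ must be read in $\mathcal H_{\rm PLOT}^A$ with $V(A)$-words as letters, and you outline how Lemma~\ref{lem:cut} itself would be proved.
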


\begin{proof}
Let
$
t:=J(\bzia)\in \OT^A.
$
Then
$
\bzia=\Phi_{\mathrm{fer}}^{\mathcal T}(t).
$
Applying $\Phi_{\mathrm{fer}}^{\mathcal F}\otimes \Phi_{\mathrm{fer}}^{\mathcal T}$ to both sides of ~\eqref{eq:mkwe}, and by the transported definition of the coproduct on the monomial side, we obtain
\begin{align*}
\Delta_{\rm PLOT}\left(\Phi_{\mathrm{fer}}^{\mathcal T}(t)\right)
=&\,
(\Phi_{\mathrm{fer}}^{\mathcal F}\otimes \Phi_{\mathrm{fer}}^{\mathcal T})\Delta_{\mathrm{MKW}}(t)
\\
=&\,
\sum_{c\in \mathrm{LAdm}(t)}
\Phi_{\mathrm{fer}}^{\mathcal F}\bigl(P^c(t)\bigr)\otimes
\Phi_{\mathrm{fer}}^{\mathcal T}\bigl(R^c(t)\bigr)
\quad  (\text{by}~\Phi_{\mathrm{fer}}^{\mathcal T}(t)=\bzia)
\\
=&\,
\sum_{c\in \mathrm{LAdm}(t)}
P^{\dhc(c)}(\bzia)\otimes R^{\dhc(c)}(\bzia)
\quad (\text{by Lemma~\ref{lem:cut}})
\\
=&\,
\sum_{\mathfrak c\in \mathrm{LAdm}(\bzia)}
P^{\mathfrak c}(\bzia)\otimes R^{\mathfrak c}(\bzia)
\quad (\text{by Lemma~\ref{lem:cut}}).
\mlabel{eq:plotcutproof}
\end{align*}
This completes the proof.
\end{proof}
\section{Extraction-contraction}\label{sec:ec}
In this section, we formulate the extraction--contraction map on the side of
planar noncommutative monomials and show that it is transported from the
corresponding ordered-forest construction on the planar rooted-tree side.

\begin{definition}
Let $t\in\OT^A$.  A {\bf covering subforest} $s\subseteq t$ is a partition
of $V(t)$ into connected blocks.  Each block inherits from $t$ the structure
of an $A$-decorated planar rooted tree.  Ordering the connected components
by the root-subtree order of their roots, we regard $s$ as an ordered
$A$-decorated planar forest.\\

For each connected component $u$ of $s$, let $\operatorname{rt}(u)$ denote
its root.  The {\bf contracted tree} $t/s$ is obtained by shrinking each
connected component $u$ of $s$ to a single vertex $\overline u$, preserving
the planar order induced from $t$.  The contracted vertex $\overline u$ is
decorated by $d(\operatorname{rt}(u))$.\\

Following the extraction--contraction coproduct in \mcite{Calaque2011} and Rahm's ordered-forest contraction coaction~\mcite{Rahm2022}, we
use the following extraction--contraction map on the MKW side:
\begin{equation}
\Gamma_{\mathrm{MKW}}:\bfk\OT^A\longrightarrow \bfk\OF^A\otimes\bfk\OT^A,
\quad
t\mapsto \sum_{s\in\mathrm{Cov}(t)}s\otimes t/s,
\mlabel{eq:GammaMKW-ecA}
\end{equation}
where $\mathrm{Cov}(t)$ denotes the set of covering subforests of $t$.
We extend $\Gamma_{\mathrm{MKW}}$ multiplicatively to ordered planar forests by
\[
\begin{aligned}
\Gamma_{\mathrm{MKW}}:\bfk\OF^A
&\longrightarrow \bfk\OF^A\otimes\bfk\OF^A,\\
\etree
&\longmapsto \etree\otimes\etree,\\
t_1\cdots t_m
&\longmapsto
\Gamma_{\mathrm{MKW}}(t_1)\cdots\Gamma_{\mathrm{MKW}}(t_m),
\quad t_1,\ldots,t_m\in\OT^A.
\end{aligned}
\]
Then
\begin{equation}
\Gamma_{\mathrm{MKW}}(t_1\cdots t_m)
=
\sum_{s_i\in\mathrm{Cov}(t_i),\,1\le i\le m}
s_1\cdots s_m
\otimes
(t_1/s_1)\cdots(t_m/s_m).
\mlabel{eq:GammaMKW-forest-explicit}
\end{equation}
\mlabel{def:ec-treeA}
\end{definition}

The preceding definition describes the extraction--contraction construction on the ordered
planar rooted-forest side; we now translate the same structure to the planar LOT side by
expressing covering subforests of $J(z_{\mathbf i}^{\mathbf a})$ as connected partitions of the
letter occurrences of the corresponding noncommutative monomial.

\begin{definition}
Let
\[
\bzia=x_1\cdots x_p
=
z_{i_1}^{a_1}\cdots z_{i_p}^{a_p}\in V(A),
\quad
t_{\bzia}:=J(\bzia)\in\OT^A.
\]
Write
\[
V(t_{\bzia})=\{v_1<\cdots<v_p\},
\]
where the vertex $v_j$ corresponds to the letter occurrence $x_j$.

\begin{enumerate}
\item A nonempty subset $\beta\subseteq\{x_1,\ldots,x_p\}$ is called a
{\bf connected block} if the induced subgraph of $t_{\bzia}$ on
\[
V(\beta):=\{v_j\mid x_j\in\beta\}
\]
is connected.

\item A {\bf covering partition} of $\bzia$ is a partition of
$\{x_1,\ldots,x_p\}$ into connected blocks.  Ordering the blocks by
increasing minima, we write such a partition as
\[
\pi=(\beta_1,\ldots,\beta_m),
\quad
\min\beta_1<\cdots<\min\beta_m.
\]
We denote by $\mathrm{Cov}(\bzia)$ the set of covering partitions of
$\bzia$.

\item For $x_j\in\beta_\ell$, define
\[
\operatorname{out}_\pi(x_j)
:=
\#\{y\in{\rm Ch}_{\bzia}(x_j)\mid y\notin\beta_\ell\},
\]
where ${\rm Ch}_{\bzia}(x_j)$ is the ordered child set introduced in
\meqref{eq:childset}.

\item If
\[
\beta_\ell=\{x_{j_1}<\cdots<x_{j_q}\},
\]
set
\begin{equation}
\rho(\beta_\ell):=d(x_{j_1})=d(v_{j_1}),
\mlabel{eq:ec-rootdecA}
\end{equation}
and define the {\bf reduced block word}
\begin{equation}
\beta_{\ell,\pi}^{\circ}
:=
z^{d(x_{j_1})}_{{\rm wt}(x_{j_1})-\operatorname{out}_\pi(x_{j_1})}
\cdots
z^{d(x_{j_q})}_{{\rm wt}(x_{j_q})-\operatorname{out}_\pi(x_{j_q})}.
\mlabel{eq:ec-redblockA}
\end{equation}
Then define the ordered monomial forest
\[
M_0(\pi):=\beta_{1,\pi}^{\circ}\cdots\beta_{m,\pi}^{\circ}
\]
and the {\bf contracted word}
\[
\bzia/\pi
:=
z^{\rho(\beta_1)}_{{\rm wt}(\beta_1)}
\cdots
z^{\rho(\beta_m)}_{{\rm wt}(\beta_m)},
\quad
{\rm wt}(\beta_\ell):=
\sum_{x_j\in\beta_\ell}{\rm wt}(x_j).
\]
\end{enumerate}
\end{definition}

Notice that every index occurring in \eqref{eq:ec-redblockA} is at least
$-1$, since
\[
\operatorname{out}_\pi(x_j)\le {\rm ch}(x_j)={\rm wt}(x_j)+1.
\]

\begin{lemma}
For every $\pi\in\mathrm{Cov}(\bzia)$ and every block $\beta\in\pi$, one has
\begin{equation}
{\rm wt}(\beta)
=
\sum_{x_j\in\beta}\operatorname{out}_\pi(x_j)-1.
\mlabel{eq:ec-blockweightA}
\end{equation}
Consequently, for each block $\beta_\ell\in\pi$,
\begin{equation*}
\beta_{\ell,\pi}^{\circ}\in V(A),
\mlabel{eq:ec-blockminusoneA}
\end{equation*}
and also
\begin{equation*}
\bzia/\pi\in V(A).
\mlabel{eq:ec-quotient-in-bnca}
\end{equation*}
\mlabel{lem:ec-blockA}
\end{lemma}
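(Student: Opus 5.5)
The plan is to count edges of the tree $t_{\bzia}=J(\bzia)$ inside a block and then read off membership in $V(A)$ through the tree--monomial bijection of Proposition~\ref{pro:Phibi}.

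\textbf{Weight identity \eqref{eq:ec-blockweightA}.} Fix a block $\beta\in\pi$ with $q$ elements. Since ${\rm ch}(x_j)={\rm wt}(x_j)+1$, we have
\[
{\rm wt}(\beta)=\sum_{x_j\in\beta}{\rm ch}(x_j)-q .
\]
By Lemma~\ref{lem:disju}, the child sets partition $\{x_2,\dots,x_p\}$ and realize the parent map of $t_{\bzia}$. Hence $\sum_{x_j\in\beta}{\rm ch}(x_j)$ counts the edges of $t_{\bzia}$ whose parent end lies in $V(\beta)$. Split these edges into two kinds. The edges with child outside $\beta$ contribute $\sum_{x_j\in\beta}\operatorname{out}_\pi(x_j)$. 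The edges with child inside $\beta$ are the edges of the induced subgraph on $V(\beta)$. This subgraph is connected and sits inside a tree, so it is a subtree with exactly $q-1$ edges. Consequently
\[
{\rm wt}(\beta)=\sum_{x_j\in\beta}\operatorname{out}_\pi(x_j)+(q-1)-q ,
\]
which is \eqref{eq:ec-blockweightA}.

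\textbf{$\beta^\circ_{\ell,\pi}\in V(A)$.} Let $u$ be the induced subtree on $V(\beta_\ell)$. Its root is the element of smallest index, because in the root-subtree order every ancestor precedes its descendants. The restriction of the root-subtree order of $t_{\bzia}$ to $V(\beta_\ell)$ coincides with the root-subtree order of $u$. To see this, note that the children of a vertex inside $u$ keep their relative planar order, and descendant blocks are consecutive in $t_{\bzia}$ by \eqref{eq:blockdecomp}. The fertility of $x_j$ in $u$ equals ${\rm ch}(x_j)-\operatorname{out}_\pi(x_j)$. Comparing with \eqref{eq:ec-redblockA}, this gives $\beta^\circ_{\ell,\pi}=\Phi^{\mathcal T}_{\rm fer}(u)$, which lies in $V(A)$ by Proposition~\ref{pro:Phibi}.

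\textbf{$\bzia/\pi\in V(A)$.} The contracted tree $t/s$ has vertices $\overline{\beta}_\ell$ ordered by the minima of the blocks. The fertility of $\overline{\beta}_\ell$ is the number of edges leaving $\beta_\ell$ downward to other blocks, namely $\sum_{x_j\in\beta_\ell}\operatorname{out}_\pi(x_j)={\rm wt}(\beta_\ell)+1$ by the first step. Its decoration is $\rho(\beta_\ell)$. It then remains to show that the root-subtree order of $t/s$ agrees with ordering by $\min\beta_\ell$. Once this holds, $\bzia/\pi=\Phi^{\mathcal T}_{\rm fer}(t/s)\in V(A)$.

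\textbf{Expected main obstacle.} The delicate point is the order compatibility in the last step. It requires that contraction preserve planar order, and that the child blocks of $\overline\beta$ appear in the order of their minima. I would argue this through the nested block structure of \eqref{eq:lem33nesting}. Each block hanging below $\beta$ is attached at some vertex of $\beta$, and its minimum is its root. Roots lying in different descendant blocks of $\beta$-vertices are then ordered consistently with the left-to-right traversal of $t_{\bzia}$.

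As a fallback, I would verify $\bzia/\pi\in V(A)$ directly from prefix weights. The total weight is $\sum_\ell{\rm wt}(\beta_\ell)={\rm wt}(\bzia)=-1$. For proper prefixes, each prefix $\beta_1,\dots,\beta_k$ with $k<m$ has at least one outgoing edge to a later block, which gives nonnegative weight.
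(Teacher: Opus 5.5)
Your proposal is correct and follows essentially the same route as the paper. The weight identity comes from counting edges inside a connected block, and membership in $V(A)$ for both $\beta^{\circ}_{\ell,\pi}$ and $\bzia/\pi$ comes from recognizing them as planar fertility words of the induced subtree and of the contracted tree $t/s$. The paper simply asserts the order compatibility you flag as the main obstacle, so your nested-block argument fills in a step the paper leaves implicit. Your fallback prefix-weight count does the same, provided you state its missing count explicitly: each block's parent block has a smaller minimum, so the first $k$ blocks share exactly $k-1$ edges among themselves.
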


\begin{proof}
Let $\beta\in\pi$ and set $m:=|\beta|$.  Since $\beta$ is connected, the
induced subgraph on $V(\beta)$ is a rooted tree with $m$ vertices and
$m-1$ internal edges.  For $x_j\in\beta$, the number
${\rm wt}(x_j)+1$ is the number of children of the corresponding vertex
$v_j$ in the ambient tree $t_{\bzia}$.  Summing over all vertices in
$\beta$, the internal edges contribute $m-1$, while the edges leaving the
block contribute
\[
\sum_{x_j\in\beta}\operatorname{out}_\pi(x_j).
\]
Thus
\[
\sum_{x_j\in\beta}\bigl({\rm wt}(x_j)+1\bigr)
=
(m-1)+\sum_{x_j\in\beta}\operatorname{out}_\pi(x_j).
\]
Subtracting $m$ gives \eqref{eq:ec-blockweightA}.

Let $\beta=\beta_\ell=\{x_{j_1}<\cdots<x_{j_q}\}$.  By
\eqref{eq:ec-redblockA} and \eqref{eq:ec-blockweightA},
\[
{\rm wt}(\beta_{\ell,\pi}^{\circ})
=
\sum_{u=1}^q
\Bigl({\rm wt}(x_{j_u})-\operatorname{out}_\pi(x_{j_u})\Bigr)
=
-1.
\]
Moreover, $\beta_{\ell,\pi}^{\circ}$ is the planar fertility word of the
connected planar subtree induced by the block $\beta_\ell$.  Hence it
satisfies the proper-prefix condition, and therefore
\[
\beta_{\ell,\pi}^{\circ}\in V(A).
\]

Finally,
\[
{\rm wt}(\bzia/\pi)
=
\sum_{\ell=1}^m{\rm wt}(\beta_\ell)
=
{\rm wt}(\bzia)
=
-1.
\]
The word $\bzia/\pi$ is the planar fertility word of the contracted tree
$t_{\bzia}/s_\pi$, where $s_\pi$ is the covering subforest corresponding to
$\pi$.  Hence $\bzia/\pi$ also satisfies the proper-prefix condition, and so
\[
\bzia/\pi\in V(A).
\]
\end{proof}

\noindent We now define the {\bf extraction--contraction map} on the planar LOT side.  For
$\bzia\in V(A)$, set
\begin{equation}
\Gamma_{\mathrm{PLOT}}(\bzia)
:=
\sum_{\pi\in\mathrm{Cov}(\bzia)}
M_0(\pi)\otimes\bzia/\pi,
\mlabel{eq:GammaPLOT-ecA}
\end{equation}
where the first tensor factor is viewed as an ordered monomial forest. Let
\[
\mathcal W(A):=
\{\bw_1\cdots\bw_m\mid m\ge0,\ \bw_i\in\ V(A)\}
\]
be the set of ordered monomial forests.  We extend $\Gamma_{\mathrm{PLOT}}$ multiplicatively to ordered monomial
forests:
\[
\Gamma_{\mathrm{PLOT}}:\bfk\mathcal W(A)
\longrightarrow
\bfk\mathcal W(A)\otimes\bfk\mathcal W(A)
\]
by setting
\[
\etree\longmapsto \etree\otimes\etree,
\quad
\bw_1\cdots\bw_m
\longmapsto
\Gamma_{\mathrm{PLOT}}(\bw_1)\cdots
\Gamma_{\mathrm{PLOT}}(\bw_m).
\]
Equivalently,
\begin{equation}
\Gamma_{\mathrm{PLOT}}(\bw_1\cdots\bw_m)
=
\sum_{\pi_i\in\mathrm{Cov}(\bw_i),\,1\le i\le m}
M_0(\pi_1)\cdots M_0(\pi_m)
\otimes
(\bw_1/\pi_1)\cdots(\bw_m/\pi_m).
\mlabel{eq:GammaPLOT-forest-explicit}
\end{equation}

Let $J_{\mathcal F}$ be the multiplicative extension of $J$:
\[
J_{\mathcal F}:\bfk\mathcal W(A)\longrightarrow\bfk\OF^A
\]
defined by
\[
\etree\longmapsto \etree,
\quad
\bw_1\cdots\bw_m
\longmapsto
J(\bw_1)\cdots J(\bw_m),
\quad
\bw_i\in V(A).
\]

The definitions above were designed to mirror covering subforests and contractions under the fertility correspondence. The next theorem confirms this
design.
\begin{theorem}
With the above definitions, one has
\begin{equation}
(J_{\mathcal F}\otimes J_{\mathcal F})
\circ \Gamma_{\mathrm{PLOT}}
=
\Gamma_{\mathrm{MKW}}\circ J_{\mathcal F}
\mlabel{eq:ec-intertwineA}
\end{equation}
as maps
\[
\bfk\mathcal W(A)\longrightarrow \bfk\OF^A\otimes\bfk\OF^A.
\]
In other words, \eqref{eq:GammaPLOT-ecA} is the word-side
extraction--contraction formula transported from the $A$-decorated planar
rooted-forest side.
\mlabel{thm:ec-transportA}
\end{theorem}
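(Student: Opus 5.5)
Both sides of \eqref{eq:ec-intertwineA} are multiplicative: $\Gamma_{\mathrm{PLOT}}$ and $\Gamma_{\mathrm{MKW}}$ are extended multiplicatively to ordered forests, and $J_{\mathcal F}$ and $J_{\mathcal F}\otimes J_{\mathcal F}$ are multiplicative. It therefore suffices to prove the identity on a single monomial $\bzia=x_1\cdots x_p\in V(A)$. Compare \eqref{eq:GammaMKW-forest-explicit} with \eqref{eq:GammaPLOT-forest-explicit}. With $t:=t_{\bzia}=J(\bzia)$, we must show
\[
\sum_{\pi\in\mathrm{Cov}(\bzia)}J_{\mathcal F}(M_0(\pi))\otimes J(\bzia/\pi)
=\sum_{s\in\mathrm{Cov}(t)}s\otimes t/s .
\]

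\textbf{Step 1: the bijection $\pi\mapsto s_\pi$.} By Proposition~\mref{pro:Phibi}, letter occurrences $x_j$ correspond to vertices $v_j$. By Lemma~\mref{lem:disju}, the parent map $\Par_{\bzia}$ reproduces the edges of $t$: in the construction of $\Psi$, the letter $x_{c_{j,s}}$ is attached to the $s$-th child slot of $x_j$. Hence a subset $\beta$ is a connected block exactly when $V(\beta)$ induces a connected subgraph of $t$. So $\pi\mapsto s_\pi$, which replaces each block by its induced subtree, is a bijection $\mathrm{Cov}(\bzia)\to\mathrm{Cov}(t)$. The ordering of blocks by increasing minima is the ordering of components by the root-subtree order of their roots. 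This uses the fact that the root of an induced connected subtree is its vertex of smallest index, because a parent always precedes its children in the root-subtree order.

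\textbf{Step 2: $J_{\mathcal F}(M_0(\pi))=s_\pi$.} Fix a block $\beta=\{x_{j_1}<\dots<x_{j_q}\}$ and let $u$ be its induced planar subtree. In $u$, the vertex $v_{j_k}$ has fertility $\mathrm{ch}(x_{j_k})-\operatorname{out}_\pi(x_{j_k})$, so the letter $z^{d}_{f-1}$ in $\Phi^{\mathcal T}_{\mathrm{fer}}(u)$ is exactly the letter in \eqref{eq:ec-redblockA}. It remains to check that the root-subtree order of $u$ is the restriction of that of $t$. This holds because the root-subtree order is the preorder (depth-first, left-to-right) traversal, and restricting a preorder to a connected subtree that keeps the planar order of the surviving children gives the preorder of that subtree. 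Hence $\Phi^{\mathcal T}_{\mathrm{fer}}(u)=\beta^\circ_{\ell,\pi}$. Lemma~\mref{lem:ec-blockA} gives $\beta^\circ_{\ell,\pi}\in V(A)$, so $J(\beta^\circ_{\ell,\pi})=u$. Multiplying over the blocks in order gives $J_{\mathcal F}(M_0(\pi))=s_\pi$ as ordered forests.

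\textbf{Step 3: $J(\bzia/\pi)=t/s_\pi$, the main obstacle.} In $t/s_\pi$, the vertex $\overline u_\ell$ has decoration $d(\mathrm{rt}(u_\ell))=\rho(\beta_\ell)$. Its fertility is the number of edges leaving the block, namely $\sum_{x_j\in\beta_\ell}\operatorname{out}_\pi(x_j)=\mathrm{wt}(\beta_\ell)+1$ by \eqref{eq:ec-blockweightA}, so it yields the letter $z^{\rho(\beta_\ell)}_{\mathrm{wt}(\beta_\ell)}$. The delicate point is that the root-subtree order of $t/s_\pi$ must list the contracted vertices in increasing order of $\min\beta_\ell$.

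To prove this, first fix the planar order of children of $\overline u$ as the preorder of their attachment points in $t$, which is the convention implicit in ``preserving the planar order induced from $t$''. Then compare preorders. If the root of $u'$ precedes the root of $u''$ in $t$, then $\overline{u'}$ precedes $\overline{u''}$ in $t/s_\pi$. I would argue this by induction on $p$: contract one component at a time, or equivalently use the fact that collapsing a connected subtree in a preorder listing deletes all non-root vertices of the subtree and merges their child subtrees while keeping their relative order. Once this holds, $\Phi^{\mathcal T}_{\mathrm{fer}}(t/s_\pi)=\bzia/\pi$. Lemma~\mref{lem:ec-blockA} gives $\bzia/\pi\in V(A)$, so Proposition~\mref{pro:Phibi} yields $J(\bzia/\pi)=t/s_\pi$.

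Summing over the bijection of Step 1 and combining Steps 2 and 3 proves the identity on monomials. Multiplicativity then extends it to $\bfk\mathcal W(A)$.
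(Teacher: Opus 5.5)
Your proposal is correct and follows the paper's proof. Both reduce to a single monomial $\bzia\in V(A)$ by multiplicativity, use the bijection between covering partitions of $\bzia$ and covering subforests of $t=J(\bzia)$, and identify each reduced block word and the contracted word $\bzia/\pi$ with the corresponding fertility words; your Steps 2 and 3 also justify two order facts that the paper only asserts, namely that the root-subtree order restricts correctly to a component and that contracted vertices are listed by their roots. One wording point: the children of $\overline u$ must be ordered by the position in $t$ of the roots of the child components, as your ``merge while keeping relative order'' argument does, and not by the position of the vertex of $u$ they hang from. For example, if $v\in u$ has children $a,w,b$ with $w\in u$ and $w$ has a child $c$, the latter convention would place $\overline{\{b\}}$ before $\overline{\{c\}}$ and break the match with $\bzia/\pi$.
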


\begin{proof}
By multiplicativity, it is sufficient to evaluate both sides of \eqref{eq:ec-intertwineA} on $V(A)$. 
Let $\bzia=x_1\cdots x_p\in V(A)$ and set $t:=J(\bzia)$.  Write
\[
V(t)=\{v_1<\cdots<v_p\},
\]
where $v_j$ corresponds to the letter occurrence $x_j$. A covering subforest $s\in\mathrm{Cov}(t)$ determines a covering partition
$\pi_s\in\mathrm{Cov}(\bzia)$ by putting $x_j$ and $x_k$ in the same block
precisely when $v_j$ and $v_k$ belong to the same connected component of
$s$.  Conversely, every covering partition $\pi\in\mathrm{Cov}(\bzia)$
determines a covering subforest $s_\pi\subseteq t$ by taking the induced
connected subtrees associated with its blocks.  This gives a bijection
\begin{equation}\label{eq:ec-bijectionA}
\mathrm{Cov}(t)\longleftrightarrow \mathrm{Cov}(\bzia),
\quad
s\longleftrightarrow \pi_s.
\end{equation}
The order of the connected components of $s$ is the order of the
corresponding blocks by increasing minima, because the root of each
component is its minimal vertex in the root-subtree order. Fix $s\in\mathrm{Cov}(t)$ and let
\[
\pi_s=(\beta_1,\ldots,\beta_m).
\]
Let $u_\ell$ be the connected component of $s$ corresponding to
\[
\beta_\ell=\{x_{j_1}<\cdots<x_{j_q}\}.
\]
For each $1\le r\le q$, the number
$\operatorname{out}_{\pi_s}(x_{j_r})$ is the number of children of
$v_{j_r}$ in $t$ which do not belong to $u_\ell$.  Hence the fertility of
$v_{j_r}$ inside $u_\ell$ is
\[
{\rm wt}(x_{j_r})+1-\operatorname{out}_{\pi_s}(x_{j_r}),
\]
and its decoration remains $d(x_{j_r})$.  Therefore
\[
\Phi_{\mathrm{fer}}^{\mathcal T}(u_\ell)
=
\beta_{\ell,\pi_s}^{\circ}.
\]
Reading the components in their planar order gives
\begin{equation}\label{eq:ec-forestwordA}
\Phi_{\mathrm{fer}}^{\mathcal F}(s)=M_0(\pi_s).
\end{equation}

It remains to compare the contracted terms.  The vertices of $t/s$ are the
connected components $u_1,\ldots,u_m$, ordered by their roots.  The vertex
corresponding to $u_\ell$ is decorated by
$d(\operatorname{rt}(u_\ell))=\rho(\beta_\ell).$
Moreover, the number of children of this contracted vertex is
$\sum_{x_j\in\beta_\ell}\operatorname{out}_{\pi_s}(x_j)={\rm wt}(\beta_\ell)+1$
by Lemma~\ref{lem:ec-blockA}.  Hence the corresponding letter in the
fertility word of $t/s$ is
$z^{\rho(\beta_\ell)}_{{\rm wt}(\beta_\ell)}.$
Therefore
\begin{equation}\label{eq:ec-quotientwordA}
\Phi_{\mathrm{fer}}^{\mathcal T}(t/s)=\bzia/\pi_s.
\end{equation}

\noindent Using the bijection \eqref{eq:ec-bijectionA}, together with
\eqref{eq:ec-forestwordA} and \eqref{eq:ec-quotientwordA}, we obtain
\[
\begin{aligned}
(J_{\mathcal F}\otimes J_{\mathcal F})
\Gamma_{\mathrm{PLOT}}(\bzia)
&=
\sum_{\pi\in\mathrm{Cov}(\bzia)}
J_{\mathcal F}(M_0(\pi))\otimes J(\bzia/\pi)\\
&=
\sum_{s\in\mathrm{Cov}(t)}
s\otimes t/s\\
&=
\Gamma_{\mathrm{MKW}}(t)
=
\Gamma_{\mathrm{MKW}}J_{\mathcal F}(\bzia).
\end{aligned}
\]
This proves the identity on generators, and the conclusion follows by
multiplicativity.
\end{proof}

\begin{remark}
The crucial point in the planar/noncommutative setting is the canonical choice
\eqref{eq:ec-rootdecA} of the decoration of a contracted block: a connected block has a unique
root, namely its minimal vertex in the root-subtree order. As in the planar cut formula,
no symmetry factor appears because the planar order rigidifies both the blocks and their order.
\mlabel{rem:ec-generalA}
\end{remark}

\section{Symmetrization operator}\label{sec:symmetrization}
The purpose of this section is to construct the multi-index version of the symmetrization operator satisfying the commutative diagram in \eqref{eq:symmetrization-square}.
Let us begin with the recalling of necessary notations in the commutative framework~\cite{ZHU2024}. Let
\[
{\rm N}(A):=\bfk[\bar{x}_j^a\mid (a,j)\in A\times \mathbb Z_{\ge -1}]
\]
be the commutative polynomial algebra, whose commutative concatenation product is denoted by $\odot$.
Here, we add a ``bar" above the variables to distinguish it from the noncommutative case.
Denote by ${\rm N}(A)_{-1}$ the homogeneous component of weight $-1$.
We write $\mathcal M(A)$ for the set of commutative monomial forests, namely
\[
\mathcal M(A):=
\{\etree\}
\cup
\Bigl\{
 M_1^{\odot \ell_1}\odot \cdots \odot
 M_r^{\odot \ell_r}
\ \Big|\
r\ge 1,\ 
 M_i\in {\rm N}(A)_{-1}\ \text{are pairwise distinct},\
\ell_i\ge 1
\Bigr\}.
\]
Thus $\mathcal M(A)$ forms the $\bfk$-basis of the LOT Hopf algebra
$\mathcal H_{\mathrm{LOT}}^A$~\cite{ZHU2024}.\\

\noindent For a weight $-1$ commutative monomial
\[
M:=\bx^{\overline{\bfk}}
:=
\prod_{a\in A,\ j\ge -1}(\bar{x}_{j}^{a})^{k_j^a}
\in {\rm N}(A)_{-1},
\quad
k_j^a\in \ZZ_{\geq 0},
\]
 define its {\bf symmetry factor} by
\[
\sigma(M)
:=
\sigma(\bx^{\overline{\bfk}})
:=
\prod_{a\in A,\ j\ge -1} k_j^a!,
\]
where $\overline{\bfk}$ stands for the multi-index $(k_j^a)_{a\in A,\, j\ge -1}$. For
\[
\mathfrak M
=
M_1^{\odot \ell_1}\odot \cdots \odot
M_r^{\odot \ell_r}
\in \mathcal M(A),
\]
with $ M_1,\ldots,M_r\in{\rm N}(A)_{-1}$ pairwise
distinct, define
\[
\sigma_{\mathrm{w}}(\mathfrak M)
:=
\ell_1!\cdots \ell_r!\,
\sigma(M_1)^{\ell_1}\cdots
\sigma(M_r)^{\ell_r}.
\]

We next recall the rooted tree side symmetrization operator~\cite{MKW2008}.
Let $\mathcal T(A)$ and $\mathcal F(A)$ denote the sets of $A$-decorated rooted trees and rooted
forests without planar structure, and let
\[
\FF:\OT^A \longrightarrow\mathcal T(A),
\qquad
\FF:\OF^A \longrightarrow \mathcal F(A)
\]
be the {\bf forgetful maps}.
The tree {\bf symmetrization operator} is given by
\begin{equation}
\Omega_t:\bfk\mathcal F(A)\longrightarrow \bfk\OF^A,
\quad
f\longmapsto
\sigma_{\tau}(f)
\sum_{\widetilde f\in \OF^A,\ 
\mathrm{\FF}(\widetilde f)=f}
\widetilde f .
\mlabel{eq:Omegatexp}
\end{equation}
It is an injective Hopf algebra morphism:
\begin{equation*}
\mathcal{H}_{\rm BCK}^{A}\inj \mathcal{H}_{\rm MKW}^{A}.
\end{equation*}
 Here $\sigma_{\tau}(f)$ is the classical symmetry factor of the rooted forest $f$.
We also recall the commutative version of the fertility map on non-planar rooted trees~\cite{ZHU2024}:
\begin{equation*}
\Phi_{\mathrm{com}}^{\mathcal T}:\bfk\mathcal T(A)\longrightarrow {\rm N}(A)_{-1},
\quad
\tau\longmapsto \prod_{v\in V(\tau)} \bar{x}_{f(v)-1}^{d(v)},
\mlabel{eq:comm-fer-tree}
\end{equation*}
where $f(v)$ is the fertility of the vertex $v$. The embedding constructed in~\cite{ZHU2024} is given by
\begin{equation}
j:\mathcal{H}_{\mathrm{LOT}}^A \hookrightarrow \mathcal{H}_{\mathrm{BCK}}^A,\quad \bx^{\overline{\bfk}} \mapsto \sum_{\tau\in \mathcal T(A),\ \Phi_{\mathrm{com}}^{\mathcal T}(\tau)=\bx^{\overline{\bfk}}}
\frac{\sigma_{\mathrm{w}}(\bx^{\overline{\bfk}})}{\sigma_{\mathrm{\tau}}(\tau)}\tau,\quad\text{ for } \bx^{\overline{\bfk}}\in {\rm N}(A)_{-1}.
\mlabel{eq:j-generator}
\end{equation}

\noindent We now construct the corresponding symmetrization operator on the multi-index side.

\begin{definition}
Define the {\bf abelianization operator}
\begin{equation*}
\operatorname{Ab}: V(A) \longrightarrow {\rm N}(A)_{-1}, \quad  \bzia=x_1\cdots x_p\in V(A)\mapsto \prod_{m=1}^p \bar{x}_{\wt(x_m)}^{d(x_m)},
\mlabel{eq:ab-word}
\end{equation*}
and extend multiplicatively to
\begin{equation*}
\operatorname{Ab}:  \mathcal{W}(A)\rightarrow \mathcal M(A),
\mlabel{eq:Ab-word-forest}
\end{equation*}
where $\mathcal{W}(A)$ is given in \meqref{eq:zzzz}.
\mlabel{def:ab-word}
\end{definition}

\begin{lemma}\label{lem:ab-fertility}
For every $t\in \OT^A$, one has
\begin{equation*}
\operatorname{Ab}\bigl(\Phi_{\mathrm{fer}}^{\mathcal T}(t)\bigr)
=
\Phi_{\mathrm{com}}^{\mathcal T}\bigl(\FF(t)\bigr).
\end{equation*}
\end{lemma}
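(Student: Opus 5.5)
The identity is a direct unwinding of the definitions, with no induction needed. Fix $t\in\OT^A$ and list its vertices $v_1<\cdots<v_p$ in the root-subtree order. By \eqref{eq:treelev},
\[
\Phi_{\mathrm{fer}}^{\mathcal T}(t)=x_1\cdots x_p,\qquad x_m=z^{d(v_m)}_{f(v_m)-1}.
\]
This word lies in $V(A)$ by Proposition~\ref{pro:Phibi}, so $\operatorname{Ab}$ is defined on it. Since $\wt(x_m)=f(v_m)-1$ and $d(x_m)=d(v_m)$, Definition~\ref{def:ab-word} gives
\[
\operatorname{Ab}\bigl(\Phi_{\mathrm{fer}}^{\mathcal T}(t)\bigr)=\prod_{m=1}^p \bar{x}^{d(v_m)}_{f(v_m)-1},
\]
where the product is taken in the commutative algebra ${\rm N}(A)$.

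On the other side, the forgetful map $\FF$ leaves three things unchanged: the vertex set, the parent relation, and the decorations. Only the planar order of siblings is lost. In particular, the fertility $f(v)$ of each vertex $v$, being the number of its children, is the same in $t$ and in $\FF(t)$. Hence
\[
\Phi_{\mathrm{com}}^{\mathcal T}(\FF(t))=\prod_{v\in V(\FF(t))}\bar{x}^{d(v)}_{f(v)-1}=\prod_{m=1}^p \bar{x}^{d(v_m)}_{f(v_m)-1}.
\]
The two products have the same factors. Because ${\rm N}(A)$ is commutative, the order in which the vertices are listed does not matter, and the two sides agree.

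The only point requiring care is to make sure that the fertility used in $\Phi_{\mathrm{com}}^{\mathcal T}$ (incoming edges in the non-planar tree) is the same quantity as the one used in \eqref{eq:treelev}. Both count the children of $v$, and this count is invariant under $\FF$. There is no real obstacle beyond recording this. If desired, the statement then extends multiplicatively to forests, via $\Phi_{\mathrm{fer}}^{\mathcal F}$ and the multiplicativity of $\operatorname{Ab}$ and $\FF$, in preparation for the proof of the symmetrization square.
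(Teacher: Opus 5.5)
Your proposal is correct and follows essentially the same route as the paper: both sides are the commutative product over vertices of $\bar{x}^{d(v)}_{f(v)-1}$, and abelianization removes the only difference, the planar order. You also note explicitly that $\FF$ preserves fertilities, which the paper leaves implicit.
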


\begin{proof}
Both sides are the commutative product, over all vertices $v\in V(t)$, of the variable indexed by
the decoration $d(v)$ and the fertility $f(v)-1$.
The planar order only affects the order of the letters in $\Phi_{\mathrm{fer}}^{\mathcal T}(t)$, and this
order disappears after abelianization.
\end{proof}

\begin{definition}
Define the {\bf word  symmetrization operator}
\begin{equation}
\Omega_w:\mathcal{H}_{\mathrm{LOT}}^A\longrightarrow \mathcal{H}_{\mathrm{PLOT}}^A
\mlabel{omegaw}
\end{equation}
by setting
\begin{equation}
\left\{
\begin{aligned}
\Omega_w(\etree)
&:=\etree,\\
\Omega_w(M)
&:=
\sigma_{\mathrm{w}}(\bx^{\overline{\bfk}})
\sum_{\bzia\in V(A),\ \operatorname{Ab}(\bzia)=\bx^{\overline{\bfk}}}\bzia,
\quad \forall M=\bx^{\overline{\bfk}}\in {\rm N}(A)_{-1},\\
\Omega_w(\mathfrak M \odot \mathfrak N)
&:=\Omega_w(\mathfrak M)\shuffle \Omega_w(\mathfrak N),
\quad \forall \mathfrak M ,\mathfrak N\in \mathcal M(A).
\end{aligned}
\right.
\mlabel{eq:Omega-w-def}
\end{equation}
\mlabel{def:Omega-w}
\end{definition}

\begin{proposition}\label{prop:Omega-w-explicit}
With the above setting,
\begin{enumerate}
\item the map $\Omega_w$ is a well-defined injective algebra morphism. \mlabel{it:symma}

\item for every basis element
\[
\mathfrak M=M_1^{\odot \ell_1}\odot \cdots \odot M_r^{\odot \ell_r}\in \mathcal M(A),
\]
with $M_1,\ldots,M_r\in {\rm N}(A)_{-1}$ pairwise distinct, one has
\begin{equation}\label{eq:Omega-w-explicit}
\Omega_w(\mathfrak M)=
\sigma_{\mathrm{w}}(\mathfrak M)
\sum_{W\in {\mathcal W}(A),\ \operatorname{Ab}(W)=\mathfrak M} W.
\end{equation}
\mlabel{it:symmb}
\end{enumerate}
\end{proposition}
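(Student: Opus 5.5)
The plan is to use that both sides are free commutative algebras on explicit bases. Then $\Omega_w$ is forced by its values on generators, and the explicit formula reduces to counting terms in an iterated shuffle product.

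\textbf{Well-definedness and the algebra morphism property.} As an algebra, $\mathcal H_{\rm LOT}^A$ is the free commutative algebra $(S({\rm N}(A)_{-1}),\odot)$, and $\mathcal M(A)$ is its monomial basis. On the other side, $\mathcal H_{\rm PLOT}^A$ is the graded dual of $U(\mathfrak g(A))$ with the deshuffle coproduct $\Delta_{\shuffle}$. By Proposition~\ref{pro:Phibi} and the isomorphism $J$ of \eqref{J:hopfm}, its product is the shuffle product $\shuffle$ on words whose letters are monomials of $V(A)$. This product is commutative and associative, with $\bfk\mathcal W(A)$ as linear basis. A linear map on the generating space ${\rm N}(A)_{-1}$ into a commutative algebra therefore extends uniquely to an algebra morphism. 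The third line of \eqref{eq:Omega-w-def} is exactly this extension, so $\Omega_w$ is well defined and multiplicative, which proves \ref{it:symma} apart from injectivity.

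\textbf{The explicit formula \eqref{eq:Omega-w-explicit}.} Write $n=\ell_1+\cdots+\ell_r$ and list the factors of $\mathfrak M$ as $N_1,\dots,N_n$, where $M_i$ appears $\ell_i$ times. Since $\sigma_{\rm w}(M)=\sigma(M)$ for a single generator, multiplicativity gives
\[
\Omega_w(\mathfrak M)=\prod_i\sigma(M_i)^{\ell_i}\sum_{(u_1,\dots,u_n)}u_1\shuffle\cdots\shuffle u_n,
\]
where $u_k\in V(A)$ ranges over monomials with $\operatorname{Ab}(u_k)=N_k$. Each $u_k$ is a single letter of the shuffle algebra, so $u_1\shuffle\cdots\shuffle u_n=\sum_{\tau\in S_n}u_{\tau(1)}\cdots u_{\tau(n)}$.

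Fix $W=w_1\cdots w_n\in\mathcal W(A)$ with $\operatorname{Ab}(W)=\mathfrak M$. The factorization of $W$ into letters $w_k\in V(A)$ is part of the data of a word in the tensor algebra, so it is unique. I then count the pairs $((u_k),\tau)$ with $u_{\tau(k)}=w_k$ for all $k$. Such a pair is determined by a bijection from the positions of $W$ to the slots $1,\dots,n$ that respects the type $M_i$ of each slot. There are exactly $\ell_1!\cdots\ell_r!$ such bijections, and this count holds even when some $w_k$ coincide, because the shuffle counts permutations with multiplicity. Multiplying by the prefactor yields the coefficient $\ell_1!\cdots\ell_r!\prod_i\sigma(M_i)^{\ell_i}=\sigma_{\rm w}(\mathfrak M)$. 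Conversely, every term produced has $\operatorname{Ab}=\mathfrak M$, which gives \eqref{eq:Omega-w-explicit}.

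\textbf{Injectivity.} By \eqref{eq:Omega-w-explicit}, the images of distinct $\mathfrak M$ are supported on disjoint sets of basis words, since $\operatorname{Ab}(W)$ recovers $\mathfrak M$. The coefficients $\sigma_{\rm w}(\mathfrak M)$ are nonzero because $\bfk$ has characteristic zero. It remains to show that each fibre $\{W:\operatorname{Ab}(W)=\mathfrak M\}$ is nonempty, and it suffices to do this for a single $M\in{\rm N}(A)_{-1}$. I expect this to be the only point needing an actual argument. I would try the cycle lemma first: arrange the letters of $M$ in any order, so that the total weight is $-1$ and every increment is at least $-1$. The cyclic rotation that starts just after the last position where the partial sum attains its minimum then has all proper prefixes of nonnegative weight, so it lies in $V(A)$. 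As an alternative, take any tree $\tau$ with $\Phi^{\mathcal T}_{\rm com}(\tau)=M$, which exists by \cite{ZHU2024}, choose a planar embedding, and apply Lemma~\ref{lem:ab-fertility}. Either way the images are linearly independent, so $\Omega_w$ is injective.
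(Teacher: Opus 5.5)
Your proof is correct and follows essentially the same route as the paper: well-definedness from commutativity of $\odot$ and $\shuffle$ (which you make precise via the universal property of the free commutative algebra on ${\rm N}(A)_{-1}$), the count of $\ell_1!\cdots\ell_r!$ type-respecting assignments for the explicit formula, and disjointness of supports for injectivity. You also make explicit two facts that the paper's injectivity argument uses without stating them: $\sigma_{\mathrm w}(\mathfrak M)\neq 0$ in characteristic zero, and every fibre $\{W\in\mathcal W(A)\mid \operatorname{Ab}(W)=\mathfrak M\}$ is nonempty.

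One small correction in your cycle-lemma step. Write $S_k=i_1+\cdots+i_k$. The rotation lying in $V(A)$ starts (cyclically) just after the \emph{first} index $k\in\{1,\dots,p\}$ at which $S_k$ is minimal, not the last one. For the weight sequence $(-1,1,-1)$, the last minimum is at $k=p$. That choice gives the identity rotation, whose first letter already has weight $-1$, so it is not in $V(A)$. Your alternative argument, via a planar embedding of a tree $\tau$ with $\Phi^{\mathcal T}_{\mathrm{com}}(\tau)=M$, is fine as it stands.
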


\begin{proof}
(a) Since the product $\odot$ on $\mathcal H_{\mathrm{LOT}}^A$ is commutative and the shuffle product $\shuffle$ on
$\mathcal H_{\mathrm{PLOT}}^A$ is also commutative, the rule in Definition~\ref{def:Omega-w} does not depend on the
order in which the factors of $\mathfrak M$ are written.
Hence $\Omega_w$ is well defined.

(b) Let
$
\mathfrak M=M_1^{\odot \ell_1}\odot \cdots \odot M_r^{\odot \ell_r}.
$
By repeated use of Definition~\ref{def:Omega-w}, we obtain
\[
\Omega_w(\mathfrak M)
=
\Omega_w(M_1)^{\shuffle \ell_1}\shuffle \cdots \shuffle \Omega_w(M_r)^{\shuffle \ell_r}.
\]
Now fix an ordered monomial forest
\[
W=\bw_1\cdots \bw_n\in {\mathcal W}(A)\, \text{ such that }\,
\operatorname{Ab}(W)=\mathfrak M.
\]
For each $i\in\{1,\dots,r\}$, the $\ell_i$ factors whose abelianization is $M_i$ can be assigned to their
positions in $\ell_i!$ ways.
Therefore $W$ appears exactly
$\ell_1!\cdots \ell_r!$
times in the above expansion.
Its coefficient is thus
\[
\ell_1!\cdots \ell_r!\,
\sigma_{\mathrm{w}}(M_1)^{\ell_1}\cdots \sigma_{\mathrm{w}}(M_r)^{\ell_r}
=
\sigma_{\mathrm{w}}(\mathfrak M),
\]
which proves~\eqref{eq:Omega-w-explicit}.
Finally, injectivity follows from~\eqref{eq:Omega-w-explicit}.
Indeed, for any $W\in {\mathcal W}(A)$, the value of $\operatorname{Ab}(W)$ is uniquely determined by $W$.
Hence two distinct basis elements of $\mathcal M(A)$ cannot have a common ordered monomial lift in
${\mathcal W}(A)$, and so their images under $\Omega_w$ have disjoint supports.
Therefore $\Omega_w$ is injective.
\end{proof}

\noindent Let us now state the main result in this section.

\begin{theorem}
The maps $j$, $\Omega_t$, $\Omega_w$, and $J$ fit into the commutative square
\begin{equation}
\xymatrix{
 \mathcal{H}_{{\rm LOT}}^{A} \ar@{^{(}->}[d]_{\Omega_w}\ar@{^{(}->}[r]^{j}
 & \mathcal{H}_{{\rm BCK}}^{A}  \ar@{^{(}->}[d]^{\Omega_t}\\
 \mathcal{H}_{{\rm PLOT}}^{A}  \ar[r]^{J}_{\sim}
 & \mathcal{H}_{{\rm MKW}}^{A}
}
\mlabel{eq:symmetrization-square}
\end{equation}
\mlabel{thm:symmetrization-square}
\end{theorem}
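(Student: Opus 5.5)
The plan is to check the identity $J\circ\Omega_w=\Omega_t\circ j$ on algebra generators of $\mathcal H_{\rm LOT}^A$, and to deduce it on the whole basis $\mathcal M(A)$ by multiplicativity. The generators are the weight $-1$ commutative monomials $M\in{\rm N}(A)_{-1}$ together with $\etree$. All four maps respect products:
\begin{enumerate}
\item $\Omega_w$ sends $\odot$ to $\shuffle$ by Definition~\ref{def:Omega-w} and Proposition~\ref{prop:Omega-w-explicit}.
\item $J$ is a Hopf algebra isomorphism by \eqref{J:hopfm}, so it sends the shuffle product on $\mathcal H_{\rm PLOT}^A$ to the shuffle product on $\mathcal H_{\rm MKW}^A$.
\item $j$ and $\Omega_t$ are algebra morphisms by \cite{ZHU2024,MKW2008}, the product on $\mathcal H_{\rm BCK}^A$ being the commutative forest product.
\end{enumerate}
Hence both composites $J\circ\Omega_w$ and $\Omega_t\circ j$ are algebra morphisms from $(\mathcal H_{\rm LOT}^A,\odot)$ to $(\mathcal H_{\rm MKW}^A,\shuffle)$. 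Both send $\etree$ to $\etree$, so it suffices to compare them on a single $M\in{\rm N}(A)_{-1}$.

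A preliminary point concerns what $J$ does on a single word. As a vector space $\mathcal H_{\rm PLOT}^A$ is the graded dual of $U(\mathfrak g(A))\cong T(V(A))$ as a coalgebra, and $\mathcal H_{\rm MKW}^A$ is dual to $T(\bfk\OT^A)$. The map $\Phi_{\rm fer}^{\mathcal F}$ sends the forest basis $\OF^A$ bijectively onto the word basis $\mathcal W(A)$, by Proposition~\ref{pro:Phibi} applied letter by letter to the factors. Its transpose $J$ therefore sends the dual basis element $W=\bw_1\cdots\bw_m$ to $J(\bw_1)\cdots J(\bw_m)$, i.e.\ $J=J_{\mathcal F}$. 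On $V(A)$ it is the inverse basis map of Remark~\ref{eq:Jinverse}. In particular $J$ sends each generator $\bzia\in V(A)$ to a single planar tree, not to a linear combination.

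Fix $M\in{\rm N}(A)_{-1}$; for a single monomial $\sigma_{\rm w}(M)=\sigma(M)$. On the left,
\[
J\bigl(\Omega_w(M)\bigr)=\sigma_{\rm w}(M)\sum_{\bzia\in V(A),\ \operatorname{Ab}(\bzia)=M}J(\bzia)
=\sigma_{\rm w}(M)\sum_{\widetilde t\in\OT^A,\ \Phi^{\mathcal T}_{\rm com}(F(\widetilde t))=M}\widetilde t .
\]
The second equality reindexes by $\widetilde t=J(\bzia)$, using the bijection $V(A)\leftrightarrow\OT^A$ of Proposition~\ref{pro:Phibi}. It also uses Lemma~\ref{lem:ab-fertility}, which gives $\operatorname{Ab}(\Phi^{\mathcal T}_{\rm fer}(\widetilde t))=\Phi^{\mathcal T}_{\rm com}(F(\widetilde t))$.

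On the right, by \eqref{eq:j-generator} and \eqref{eq:Omegatexp},
\[
\Omega_t\bigl(j(M)\bigr)=\sum_{\tau\in\mathcal T(A),\ \Phi^{\mathcal T}_{\rm com}(\tau)=M}\frac{\sigma_{\rm w}(M)}{\sigma_\tau(\tau)}\,\sigma_\tau(\tau)\sum_{\widetilde t\in\OT^A,\ F(\widetilde t)=\tau}\widetilde t
=\sigma_{\rm w}(M)\sum_{\tau}\ \sum_{F(\widetilde t)=\tau}\widetilde t .
\]
The double sum runs over the planar trees $\widetilde t$ with $\Phi^{\mathcal T}_{\rm com}(F(\widetilde t))=M$, grouped by their underlying nonplanar tree $\tau$. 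Each such $\widetilde t$ occurs exactly once, so the two expressions agree.

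The step needing most care is the convention check rather than any combinatorics. One has to confirm that $\Omega_t$ as normalized in \eqref{eq:Omegatexp} is $\sigma_\tau(f)$ times the plain sum of planar representatives, so that this factor cancels exactly the $1/\sigma_\tau(\tau)$ in $j$. One also has to confirm that the multiplicative extension of $\Omega_t$ to forests matches $\Omega_w$ on products, i.e.\ that both are genuinely algebra maps into the shuffle algebra. If the normalization of $\Omega_t$ on forests were not multiplicative, I would instead compare both sides directly on $\mathfrak M\in\mathcal M(A)$. That comparison would use \eqref{eq:Omega-w-explicit}, and would check that $\sigma_{\rm w}(\mathfrak M)$ matches the forest symmetry factor combined with the coefficients of $j$ on products.
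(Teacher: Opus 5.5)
Your proposal is correct and is essentially the paper's proof. Both arguments check $J\circ\Omega_w=\Omega_t\circ j$ on a generator $M\in{\rm N}(A)_{-1}$: they cancel $\sigma_\tau(\tau)$, regroup planar trees by their underlying nonplanar tree, and reindex through $J=(\Phi^{\mathcal T}_{\rm fer})^{-1}$ and Lemma~\ref{lem:ab-fertility}. They then extend to all of $\mathcal H^A_{\rm LOT}$ because the four maps are algebra morphisms. The convention worry in your last paragraph does not arise, since the forest normalization in \eqref{eq:Omegatexp} is multiplicative into the shuffle algebra by the same $\ell_1!\cdots\ell_r!$ counting used in Proposition~\ref{prop:Omega-w-explicit}.
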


\begin{proof}
Let $\bx^{\overline{\bfk}}\in {\rm N}(A)_{-1}$. On the one hand,
\begin{align}
\Omega_t\circ\bigl(j(\bx^{\overline{\bfk}})\bigr)
&\overset{(\ref{eq:j-generator})}{=}
\sum_{\tau\in \mathcal T(A),\ \Phi_{\mathrm{com}}^{\mathcal T}(\tau)=\bx^{\overline{\bfk}}}
\frac{\sigma_{\mathrm{w}}(\bx^{\overline{\bfk}})}{\sigma_{\mathrm{\tau}}(\tau)}\,
\Omega_t(\tau)  \nonumber
\\
&\overset{\eqref{eq:Omegatexp}}{=}
\sum_{\tau\in \mathcal T(A),\ \Phi_{\mathrm{com}}^{\mathcal T}(\tau)=\bx^{\overline{\bfk}}}
\frac{\sigma_{\mathrm{w}}(\bx^{\overline{\bfk}})}{\sigma_{\tau}(\tau)}\,
\sigma_{\tau}(\tau)
\sum_{t\in \OT^A,\ \FF(t)=\tau}t  \nonumber
\\
&=
\sigma_{\mathrm{w}}(\bx^{\overline{\bfk}})
\sum_{t\in \OT^A,\ \Phi_{\mathrm{com}}^{\mathcal T}(\FF(t))=\bx^{\overline{\bfk}}}t \nonumber
\\
&= \sigma_{\mathrm{w}}(\bx^{\overline{\bfk}})
\sum_{t\in \OT^A,\ \operatorname{Ab}\bigl(\Phi_{\mathrm{fer}}^{\mathcal T}(t)\bigr)=\bx^{\overline{\bfk}}}t \hspace{1cm}\text{(by Lemma~\ref{lem:ab-fertility})}.
\mlabel{eq:left}
\end{align}
On the other hand,
\begin{align}
J\circ\bigl(\Omega_w(\bx^{\overline{\bfk}})\bigr)
& \overset{(\ref{eq:Omega-w-def})}{=}
\sigma_{\mathrm{w}}(\bx^{\overline{\bfk}})
\sum_{\bzia\in V(A),\ \operatorname{Ab}(\bzia)=\bx^{\overline{\bfk}}} J(\bzia)\nonumber
\\
& =
\sigma_{\mathrm{w}}(\bx^{\overline{\bfk}})
\sum_{t\in \OT^A,\ \operatorname{Ab}\bigl(\Phi_{\mathrm{fer}}^{\mathcal T}(t)\bigr)=\bx^{\overline{\bfk}}}t  \hspace{2cm} \text{(by $J=(\Phi_{\mathrm{fer}}^{\mathcal T})^{-1}$)}.
\mlabel{eq:right}
\end{align}
Comparing~\eqref{eq:left} and~\eqref{eq:right}, we obtain the required equality. Since $j$, $\Omega_t$, $\Omega_w$, and $J$ are algebra morphisms\footnotemark, and since
$\mathcal H_{\mathrm{LOT}}^A$ is generated as an algebra by ${\rm N}(A)_{-1}$,
the identity extends from generators to all elements of
$\mathcal H_{\mathrm{LOT}}^A$.  This completes the proof.
\end{proof}

\footnotetext{This follows from the construction of $j$ recalled before~\eqref{eq:j-generator},
\eqref{eq:Omegatexp}, Proposition~\mref{prop:Omega-w-explicit}, and
\eqref{J:hopfm}, respectively.}

\begin{coro}\label{cor:Omega-w-hopf}
The word symmetrization operator $\Omega_w$ in Definition~\mref{def:Omega-w}
is an injective Hopf algebra morphism.
\end{coro}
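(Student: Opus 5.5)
The plan is to deduce the corollary from the commutative square of Theorem~\ref{thm:symmetrization-square}, using that $J$ is invertible. Since $J:\mathcal H_{\mathrm{PLOT}}^A\to\mathcal H_{\mathrm{MKW}}^A$ is a Hopf algebra isomorphism by \eqref{J:hopfm}, the identity $J\circ\Omega_w=\Omega_t\circ j$ can be rewritten as
\[
\Omega_w=J^{-1}\circ\Omega_t\circ j .
\]
Here $j$ is an injective Hopf algebra morphism (the embedding of \cite{ZHU2024}), $\Omega_t$ is an injective Hopf algebra morphism (recalled after \eqref{eq:Omegatexp}), and $J^{-1}$ is a Hopf algebra isomorphism. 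A composite of Hopf algebra morphisms is again a Hopf algebra morphism, and a composite of injective maps is injective. This gives both assertions at once.

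To be careful, I would check explicitly that the square in Theorem~\ref{thm:symmetrization-square} holds on all of $\mathcal H_{\mathrm{LOT}}^A$, not only on generators. The proof of that theorem verifies $J\Omega_w=\Omega_t j$ on ${\rm N}(A)_{-1}$ and then extends by multiplicativity. For this, I would confirm that all four maps are algebra morphisms for the relevant products: $\odot$ on $\mathcal H_{\mathrm{LOT}}^A$ and $\mathcal H_{\mathrm{BCK}}^A$, and $\shuffle$ on $\mathcal H_{\mathrm{PLOT}}^A$ and $\mathcal H_{\mathrm{MKW}}^A$. For $\Omega_w$ this is Proposition~\ref{prop:Omega-w-explicit}(a). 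For $J$, it holds because $J$ is dual to the deshuffle-compatible map $\Phi_{\mathrm{fer}}$, so $J$ sends $\shuffle$ to $\shuffle$.

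Injectivity can also be seen directly from Proposition~\ref{prop:Omega-w-explicit}(a), so it serves as a consistency check. The substantive new content is therefore coalgebra compatibility, namely
\[
\Delta_{\mathrm{PLOT}}\circ\Omega_w=(\Omega_w\otimes\Omega_w)\circ\Delta_{\mathrm{LOT}} .
\]
This follows by applying $J\otimes J$ and using the square together with the coalgebra-morphism property of $j$ and $\Omega_t$:
\[
(J\otimes J)\Delta_{\mathrm{PLOT}}\Omega_w=\Delta_{\mathrm{MKW}}J\Omega_w=\Delta_{\mathrm{MKW}}\Omega_t j=(\Omega_t\otimes\Omega_t)(j\otimes j)\Delta_{\mathrm{LOT}}=(J\otimes J)(\Omega_w\otimes\Omega_w)\Delta_{\mathrm{LOT}} .
\]
Since $J\otimes J$ is injective, the identity follows. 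Compatibility with the counit and unit follows in the same way, since $\Omega_w(\etree)=\etree$ and all maps preserve the grading of degree zero.

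The only step needing care is the one above: the extension of the square from generators to all of $\mathcal H_{\mathrm{LOT}}^A$ must rest on genuine multiplicativity of each map, in particular that $J$ intertwines the shuffle products. Once that is granted, the argument is purely formal.
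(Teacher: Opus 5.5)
Your proposal is correct and follows the paper's proof. The algebra-morphism property and injectivity come from Proposition~\ref{prop:Omega-w-explicit}, and coproduct compatibility is obtained exactly as in the paper: apply $J\otimes J$, use the square of Theorem~\ref{thm:symmetrization-square} and the Hopf-morphism properties of $j$ and $\Omega_t$, then cancel the isomorphism $J\otimes J$. Your reformulation $\Omega_w=J^{-1}\circ\Omega_t\circ j$ is the same argument in condensed form; it obtains injectivity as a composite of injections rather than from the explicit formula \eqref{eq:Omega-w-explicit}.
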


\begin{proof}
By Proposition~\ref{prop:Omega-w-explicit}, it remains to prove that $\Omega_w$ is compatible with the coproducts.
 We obtain
\begin{align*}
(J\otimes J)\circ \Delta_{\mathrm{PLOT}}\circ \Omega_w
&\overset{\eqref{J:hopfm}}{=}
\Delta_{\mathrm{MKW}}\circ J\circ \Omega_w
\\
&\overset{\eqref{eq:symmetrization-square}}{=}
\Delta_{\mathrm{MKW}}\circ \Omega_t \circ j
\\
&=
(\Omega_t\otimes \Omega_t)\circ \Delta_{\mathrm{BCK}}\circ j \hspace{1cm} \text{(by $\Omega_t$ being a Hopf algebra morphism)}
\\
&=
(\Omega_t\otimes \Omega_t)\circ (j\otimes j)\circ \Delta_{\mathrm{LOT}} \hspace{1cm} \text{(by $j$ being a Hopf algebra morphism)}
\\
&\overset{\eqref{eq:symmetrization-square}}{=}
(J\otimes J)\circ (\Omega_w\otimes \Omega_w)\circ \Delta_{\mathrm{LOT}}.
\end{align*}
Since $J$ is an isomorphism, so is $J\otimes J$.
Hence
\[
\Delta_{\mathrm{PLOT}}\circ\Omega_w
=
(\Omega_w\otimes \Omega_w)\circ\Delta_{\mathrm{LOT}}.
\]
Thus $\Omega_w$ is a bialgebra morphism.
As both Hopf algebras are connected and graded, $\Omega_w$ is automatically a Hopf algebra morphism.
\end{proof}

\noindent
{\bf Acknowledgments.} This work is supported by the National Natural Science Foundation of China (12571019), the Natural Science Foundation of Gansu Province (25JRRA644), Innovative Fundamental Research Group Project of Gansu Province (23JRRA684) and Longyuan Young Talents of Gansu Province.

\end{document}